\title[Survey on $L^2$-invariants and $3$-manifolds]
{Survey on $L^2$-invariants and $3$-manifolds}
\author{L\"uck, W.}
        \address{Mathematical Institute of the Univerisity of Bonn\\
                Endenicher Allee 60\\
                53115 Bonn, Germany}
         \email{wolfgang.lueck@him.uni-bonn.de}
          \urladdr{http://www.him.uni-bonn.de/lueck}
         \date{June, 2021}
              \keywords{$L^2$-invariants,  $3$-manifolds, and groups}
     \subjclass[2010]{59Q99,22D25,46L99, 58J52}
\DeclareMathAlphabet\EuR{U}{eur}{m}{n}
\SetMathAlphabet\EuR{bold}{U}{eur}{b}{n}
\theoremstyle{plain}
\newtheorem{theorem}{Theorem}[section]
\newtheorem{lemma}[theorem]{Lemma}
\newtheorem{conjecture}[theorem]{Conjecture}
\newtheorem{question}[theorem]{Question}
\theoremstyle{definition}
\newtheorem{assumption}[theorem]{Assumption}
\newtheorem{definition}[theorem]{Definition}
\newtheorem{example}[theorem]{Example}
\newtheorem{remark}[theorem]{Remark}
\newtheorem{notation}[theorem]{Notation}
\global\let\c@equation=\c@theorem}
\newcommand{\comsquare}[8]                   
{\begin{CD}
#1 @>#2>> #3\\
@V{#4}VV @V{#5}VV\\
#6 @>#7>> #8
\end{CD}
}
\newcommand{\xycomsquare}[8]                   
{\xymatrix
{#1 \ar[r]^{#2} \ar[d]^{#4} &
#3 \ar[d]^{#5}  \\
#6\ar[r]^{#7} &
#8
}
}
\newcommand{\xycomsquareminus}[8]                      
{\xymatrix{#1 \ar[r]^-{#2} \ar[d]^-{#4} &
#3 \ar[d]^-{#5}  \\
#6\ar[r]^-{#7} &
#8
}
}
\newcommand{\calb}{\mathcal{B}}
\newcommand{\calc}{\mathcal{C}}
\newcommand{\cald}{\mathcal{D}}
\newcommand{\calf}{\mathcal{F}}
\newcommand{\calp}{\mathcal{P}}
\newcommand{\caln}{{\mathcal N}}
\newcommand{\calu}{{\mathcal U}}
\newcommand{\IC}{{\mathbb C}}
\newcommand{\IH}{{\mathbb H}}
\newcommand{\IN}{{\mathbb N}}
\newcommand{\IP}{{\mathbb P}}
\newcommand{\IQ}{{\mathbb Q}}
\newcommand{\IR}{{\mathbb R}}
\newcommand{\IZ}{{\mathbb Z}}
\newcommand{\an}{\operatorname{an}}
\newcommand{\coker}{\operatorname{coker}}
\newcommand{\colim}{\operatorname{colim}}
\newcommand{\Ext}{\operatorname{Ext}}
\newcommand{\GL}{\operatorname{GL}}
\newcommand{\id}{\operatorname{id}}
\newcommand{\im}{\operatorname{im}}
\newcommand{\pr}{\operatorname{pr}}
\newcommand{\sing}{\operatorname{sing}}
\newcommand{\SL}{\operatorname{SL}}
\newcommand{\supp}{\operatorname{supp}}
\newcommand{\tors}{\operatorname{tors}}
\newcommand{\tr}{\operatorname{tr}}
\newcommand{\vol}{\operatorname{vol}}
\newcommand{\Wh}{\operatorname{Wh}}
\newcommand{\higherlim}[3]{{\setbox1=\hbox{\rm lim}
        \setbox2=\hbox to \wd1{\leftarrowfill} \ht2=0pt \dp2=-1pt
        \mathop{\vtop{\baselineskip=5pt\box1\box2}}
        _{#1}}^{#2}#3}
\newcommand{\version}[1]                       
{\begin{center} last edited on #1\\
last compiled on \today\\
name of texfile: \jobname
\end{center}
}
\newcounter{commentcounter}
\newcommand{\squarematrix}[4]{\left( \begin{array}{cc} #1 & #2 \\ #3 &
#4
\end{array} \right)
}
\begin{document}




\typeout{---------------------------- L2approx_survey.tex ----------------------------}


\typeout{------------------------------------ Abstract
  ----------------------------------------}

\begin{abstract}
  In this paper give a survey about $L^2$-invariants focusing on $3$-manifolds.
\end{abstract}

\maketitle


\typeout{------------------------------- Section 0: Introduction
  --------------------------------}

\setcounter{section}{-1}

\section{Introduction}\label{sec:Introduction}

The theory of $L^2$-invariants was triggered by Atiyah in his paper on the $L^2$-index
theorem~\cite{Atiyah(1976)}.  He followed the general principle to consider a classical
invariant of a compact manifold and to define its analog for the universal covering taking
the action of the fundamental group into account. Its application to (co)homology, Betti
numbers and Reidemeister torsion led to the notions of $L^2$-(cohomology), $L^2$-Betti
numbers and $L^2$-torsion. Since then $L^2$-invariants were developed much further. They
already had and will continue to have 
striking, surprizing, and deep impact on questions and problems of fields, for some of
which one would not expect any relation, e.g., to algebra, differential geometry, global
analysis, group theory, topology, transformation groups, and von Neumann algebras.

The theory of $3$-manifolds has also a quite impressive history culminating in the work of
Waldhausen and in particular of Thurston and much later in the proof of the Geometrization
Conjecture by Perelman and of the Virtual Fibration Conjecture by Agol. It is amazing how
many beautiful, easy to comprehend, and deep theorems, which often represent the best
result one can hope for, have been proved for $3$-manifolds.

The motivating question of this survey paper is: What happens if these two prominent and
successful areas meet one another?  The answer will be: Something very interesting.

In Section~\ref{sec:Brief_surfey_on_3-manifolds} we give a brief overview over basics
about $3$-manifolds, which of course can be skipped by someone who has already some
background. We will explain the prime decomposition, Kneser's Conjecture, the Jaco-Shalen
splitting, Thurston's Geometrization Conjecture, and the Virtual Fibration Conjecture.
They give a deep insight into the structure of $3$-manifolds. All these conjectures have
meanwhile been proved. We also explain the Thurston norm and polytope, which
have been connected to $L^2$-invariants in the recent years.

In Section~\ref{sec:Brief_survey_on_L2-invariants} we briefly explain the definition of
$L^2$-Betti numbers and of $L^2$-torsion including the necessary input from the theory of
von Neumann algebras. For the rest of the article the concrete constructions are not
relevant and can be skipped, but one has to understand the basic properties of the
$L^2$-invariants, see
Subsection~\ref{subsec:Basic_properties_of_L2-Betti_numbers_and_L2-torsion_of_universal_coverings_of_finite_CW-complexes}.
For the large variety of applications of $L^2$-invariants, we will refine ourselves to
$3$-manifolds.  A discussion of all the other plentiful applications to various different fields goes
beyond the scope of this survey article.

In Section~\ref{sec:Some_open_conjectures_about_L2-invariants} we discuss the main open
conjectures and problems about $L^2$-invariants: the Atiyah Conjecture, the Singer
Conjecture, the Determinant Conjecture, various conjectures about approximation and
homological growth, and the relation between simplicial volume and $L^2$-invariants for
aspherical closed manifolds.  These conjectures make sense and are interesting in all
dimensions.

In general $L^2$-invariants are hard to compute. We explain in
Section~\ref{sec:The_computation_of_L2-Betti_numbers_and_L2-torsion_of_3-manifolds} that
one can compute the $L^2$-Betti numbers and the $L^2$-torsion for $3$-manifolds explicitly
exploring all the known results about $3$-manifolds mentioned above.

In Section~\ref{sec:The_status_of_the_conjectures_about_L2-invariants_for_3-manifolds} we
discuss  the status of all the conjectures mentioned in
Section~\ref{sec:Some_open_conjectures_about_L2-invariants} for $3$-manifolds.  Roughly
speaking, they are essentially all known except the conjecture about homological growth,
which is wide open also in dimension $3$.

The remaining sections are dealing with rather new developments concerning the twisting of
$L^2$-invariants with (not necessarily unitary or unimodular) finite-dimensional
representations.  The basics of this construction are presented in
Section~\ref{sec:Twisting_L2-invariants_with_finite-dimensional_representations} and
Section~\ref{sec:Twisting_L2-invariants_with_a_homomorphism_to_IR}.

All these twisted invariants make sense in all dimensions and have a great potential, but
concrete and interesting results are known so far only in dimension $3$. Again this due to
the fact that the structure of $3$-manifolds is rather special and well understood
nowadays.  This will be carried out in
Sections~\ref{sec:degree_of_the_reduced_twisted_L2-torsion_function_and_the_Thurston_norm}
and~\ref{sec:The_universal_L2-torsion_and_the_Thurston_polytope}, where the Turston
norm and polytope are linked to the  degree of the $L^2$-torsion function, universal
$L^2$-torsion, and the $L^2$-polytope.

In the final
Section~\ref{sec:Profinite_completion_of_the_fundamental_group_of_a_3-manifold} we relate
the conjecture of homological growth to the question, whether the $L^2$-torsion of an aspherical closed
$3$-manifold depends only on the profinite completion of the fundamental group.


\subsection{Acknowledgments}\label{subsec:Acknowledgements}

The paper is funded by the ERC Advanced Grant \linebreak ``KL2MG-interactions'' (no.
662400) of the second author granted by the European Research Council and by the Deutsche
Forschungsgemeinschaft (DFG, German Research Foundation) under Germany's Excellence
Strategy \--- GZ 2047/1, Projekt-ID 390685813.

The author wants to thank heartily Clara L\"oh, Stefan Friedl, and the referee for their useful comments.

The paper is organized as follows:

\tableofcontents


\typeout{------------------------------- Section 1: Brief Survey on $3$-manifolds
  -------------------}

\section{Brief survey on $3$-manifolds}\label{sec:Brief_surfey_on_3-manifolds}

In this section we give a brief survey about $3$-manifolds.

For the remainder of this paper $3$-manifold is to be understood to be connected compact
and orientable, and we allow a non-empty boundary.  The assumption orientable will make
the formulation of some results easier and is no real constraint for $L^2$-invariants,
since these are multiplicative under finite coverings and therefore one may pass to the
orientation covering in the non-orientable case.


\subsection{The prime decomposition and Kneser's Conjecture}%
\label{subsec:The_prime_decomposition_and_Knesers_Conjecture}

A $3$-manifold $M$ is \emph{prime} if for any decomposition of $M$ as a connected sum
$M_1 \# M_2$, $M_1$ or $M_2$ is homeomorphic to $S^3$. It is \emph{irreducible} if every
embedded $2$-sphere bounds an embedded $3$-disk. Any prime $3$-manifold is either
irreducible or is homeomorphic to $S^1 \times S^2$~\cite[Lemma~3.13]{Hempel(1976)}.

Every $3$-manifold $M$ has a prime decomposition, i.e., one can write $M$ as a connected
sum
\begin{eqnarray*}
  M  & = & M_1 \# M_2 \# \ldots \# M_r,
\end{eqnarray*}
where each $M_j$ is prime, and this prime decomposition is unique up to renumbering and
orientation preserving homeomorphism~\cite[Theorems~3.15 and~3.21]{Hempel(1976)}.

Let $M$ be a $3$-manifold with incompressible boundary whose fundamental group admits a
splitting $\alpha\colon \pi_1(M)\to \Gamma_1 \ast \Gamma_2$.  Kneser's Conjecture, whose
proof can be found in~\cite[Chapter~7]{Hempel(1976)}, says that there are manifolds $M_1$
and $M_2$ with $\Gamma_1$ and $\Gamma_2$ as fundamental groups and a homeomorphism
$M \to M_1 \# M_2$ inducing $\alpha$ on the fundamental groups. Here \emph{incompressible
  boundary} means that no boundary component is $S^2$ and the inclusion of each boundary
component into $M$ induces an injection on the fundamental groups.

\subsection{The Jaco-Shalen-Johannson splitting}%
\label{subsec:The_Jaco-Shalen-Johannson_splitting}

We use the definition of \emph{Seifert manifold} given in~\cite{Scott(1983)}, which we
recommend as a reference on Seifert manifolds.
The work of Casson and Gabai shows that an irreducible $3$-manifold with infinite fundamental group $\pi$ is
Seifert if and only if $\pi$ contains a normal infinite cyclic subgroup, 
see~\cite[Corollary~2~on~page~395]{Gabai(1991)}. This together with the argument appearing
in~\cite[page~436]{Scott(1983)} implies the following statement: If a $3$-manifold $M$ has infinite
fundamental group and empty or incompressible boundary, then it is Seifert if and only if
it admits a finite covering $\overline{M}$, which is the total space of a $S^1$-principal
bundle over a compact orientable surface.

Johannson~\cite{Johannson(1979)} and Jaco and Shalen~\cite{Jaco-Shalen(1979)} have shown
for an irreducible $3$-manifold $M$ with incompressible boundary the following result. There is a finite
family of disjoint, pairwise-nonisotopic incompressible tori in $M$, which are not isotopic
to boundary components and which split $M$ into pieces that are Seifert manifolds or are
\emph{geometrically atoroidal}%
\index{atoroidal!geometrically}, meaning that they admit no embedded incompressible torus
(except possibly parallel to the boundary).  A minimal family of such tori is unique up to
isotopy, and we will say that it gives a \emph{toral splitting} of $M$.

A \emph{graph manifold} is an irreducible $3$-manifold for which all its pieces in the
Jaco-Shalen-Johannson splitting are Seifert fibered spaces.


\subsection{Thurston's Geometrization Conjecture}%
\label{subsec:Thurstons_Geometrization_Conjecture}

Recall that a manifold (possible with boundary) is called \emph{hyperbolic}%
\index{manifold!hyperbolic} if its interior admits a complete Riemannian metric whose
sectional curvature is constant $-1$.

\emph{Thurston's Geometrization Conjecture} for irreducible $3$-manifolds with infinite
fundamental groups states that the geometrically atoroidal pieces in the
Jaco-Shalen-Johannson splitting carry a hyperbolic structure.

Roughly speaking, a \emph{geometry}%
\index{geometry on a $3$-manifold} on a $3$-manifold $M$ is a complete locally homogeneous
Riemannian metric on its interior. The precise definition is given for instance
in~\cite[page~17]{Aschenbrenner-Friedl-Wilton(2015)}.
The universal cover of the interior has a complete
homogeneous Riemannian metric, meaning that the isometry group acts
transitively~\cite{Singer(1960)}.  Thurston has shown that there are precisely eight
simply connected $3$-dimensional geometries having compact quotients, namely $S^3$,
$\IR^3$, $S^2 \times \IR$, $\IH^2 \times \IR$, $\operatorname{Nil}$,
$\widetilde{\operatorname{SL}_2(\IR)}$, $\operatorname{Sol}$ and $\IH^3$. If a closed
$3$-manifold admits a geometric structure modelled on one of these eight geometries, then
the geometry involved is unique.

Let $M$ be a closed Seifert manifold. Then it has a geometry.  In terms of the Euler class
$e$ of the Seifert bundle and the Euler characteristic $\chi$ of the base orbifold, the
geometric structure is determined as follows~\cite[Theorem~5.3]{Scott(1983)}
\[
  \begin{array}{c|ccc} &\chi > 0 & \chi =0 & \chi < 0 \\ \hline
    e =0  & S^2 \times \IR & \IR^3 & \IH^2 \times \IR \\
    e \ne 0 & S^3 & \operatorname{Nil} & \widetilde{\operatorname{SL}_2(\IR)}
  \end{array}
\]
The geometry is $S^3$ if and only if $\pi_1(M)$ is finite. Moreover, $M$ is finitely
covered by the total space $\overline{M}$ of an $S^1$-principal bundle
$p \colon \overline{M} \to F$ over an orientable closed surface $F$. We have $e = 0$ if
and only if $e(p)=0$, and the Euler characteristic $\chi$ of the base orbifold of $M$ is
negative, zero or positive if and only if $\chi(\overline{M}/S^1)$ has the same property,
see~\cite[page~426,~427 and~436]{Scott(1983)}.

For completeness we mention that Thurston's Geometrization Conjecture implies for a closed
$3$-manifold with finite fundamental group that its universal covering is homeomorphic to
$S^3$, the fundamental group of $M$ is a subgroup of $SO(4)$ and the action of it on the
universal covering is conjugated by a homeomorphism to the restriction of the obvious
$SO(4)$-action on $S^3$.  This implies, in particular, the \emph{Poincar\'e Conjecture}
that any homotopy $3$-sphere is homeomorphic to $S^3$.

Many results about $3$-manifolds have as hypothesis that Thurston's Geometrization
Conjecture holds. Meanwhile Thurston's Geometrization Conjecture is known to be true, a
proof is given in~\cite{Kleiner-Lott(2008),Morgan-Tian(2014)} following the spectacular
ideas of Perelman.


\subsection{The Virtual Fibration Conjecture}%
\label{subsec:The_Virtual_Fibration_Conjecture}

Given a $3$-manifold and a non-trivial element
$\phi\in H^1(N;\mathbb{Q}) = \mbox{Hom}(\pi_1(N),\mathbb{Q})$, we say that \emph{$\phi$ is
  fibered} if there exists a locally trivial fiber bundle $p\colon N\to S^1$ with a
compact surface as fiber and an element $r\in \IQ$ such that the induced map
$p_*\colon \pi_1(N)\to \pi_1(S^1)=\mathbb{Z}$ coincides with $r\cdot \phi$.  We say
\emph{$\phi \in H^1(N;\mathbb{R})$ is quasi-fibered} if $\phi$ is the limit in
$H^1(N;\mathbb{R})$ of fibered classes in $H^1(N;\IQ) \subseteq H^1(N;\mathbb{R})$.  A
group is \emph{residually finite rationally solvable} ($\operatorname{RFRS}$ for short),
if there is a filtration of $\pi$ by subgroups
$\pi=\pi_0\supseteq \pi_1 \supseteq \pi_2\supseteq \cdots $ such that
\begin{enumerate}
\item $\bigcap_i \pi_i=\{1\}$;
\item for any $i$ the group $\pi_i$ is a normal, finite-index subgroup of $\pi$;
\item for any $i$ the map $\pi_i\to \pi_i/\pi_{i+1}$ factors through
  $\pi_i\to H_1(\pi_i;\IZ)/\mbox{torsion}$.
\end{enumerate}

If (P) is a property of groups, for instance being ($\operatorname{RFRS})$,
then  a group  is called virtually (P) if it contains a  subgroup of finite index which has property (P).
The following is a straightforward consequence of the Virtual Fibering Theorem of
Agol~\cite[Theorem~5.1]{Agol(2008)}, see also~\cite[Corollary~5.2]{Friedl-Vidussi(2015)}
and~\cite{Kielak(2020fibring)}.

\begin{theorem}\label{thm:quasifib}
  Let $N$ be a prime 3-manifold.  Suppose that $\pi_1(N)$ is virtually
  $\operatorname{RFRS}$.  Then there exists a finite regular cover
  $p\colon \widehat{N}\to N$ such that for every class $\phi\in H^1(N;\IR)$ the class
  $p^*\phi\in H^1(\widehat{N};\IR)$ is quasi-fibered.
\end{theorem}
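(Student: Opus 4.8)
The plan is to deduce Theorem~\ref{thm:quasifib} from Agol's Virtual Fibering Theorem~\cite[Theorem~5.1]{Agol(2008)} by a combination of three ingredients: first, producing a single finite cover on which all cohomology classes are simultaneously (quasi-)fibered; second, upgrading from rational fibered classes to real quasi-fibered classes via an approximation argument; and third, arranging the cover to be regular. I would organize the argument as follows.

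First I would reduce to the case where $\pi_1(N)$ itself is $\operatorname{RFRS}$: by hypothesis there is a finite-index subgroup $\pi' \le \pi_1(N)$ that is $\operatorname{RFRS}$, and after passing to the normal core we may assume $\pi'$ is normal of finite index, corresponding to a finite regular cover $N' \to N$; it suffices to prove the statement for $N'$, since a finite regular cover of $N'$ that is also regular over $N$ can be obtained by again taking a normal core (a finite cover of $N'$ whose group is characteristic, hence normal, in $\pi_1(N')$, is normal in $\pi_1(N)$), and the pullback to such a cover of a quasi-fibered class is quasi-fibered because fiberedness of a class is preserved under pullback along finite covers and quasi-fiberedness is a closed condition. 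So assume $\pi := \pi_1(N)$ is $\operatorname{RFRS}$.

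Next I would invoke Agol's theorem: for a compact orientable irreducible $3$-manifold $N$ with $\pi_1(N)$ $\operatorname{RFRS}$ (here $N$ is prime; the case $N = S^1 \times S^2$ is handled separately and trivially, since then $b_1 = 1$ and the generator of $H^1$ is itself fibered), and for each fibered face of the Thurston norm ball, Agol produces a finite cover in which the pullback of that face lies in the ``nice'' position making all its rational classes fibered. Since $H^1(N;\IR)$ is finite-dimensional, its unit sphere in the Thurston norm is covered by finitely many (closed) top-dimensional cones over faces, so applying Agol's construction finitely many times and taking a common finite cover $\widehat{N}_0 \to N$ (the cover corresponding to the intersection of the relevant finite-index subgroups), we arrange that on $\widehat{N}_0$ every class in $H^1(\widehat{N}_0;\IQ)$ that lies in the cone over a fibered face is fibered, and by Thurston's theory the fibered faces' cones cover a dense open subset and their closures cover everything. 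The key point, which is exactly the content of~\cite[Corollary~5.2]{Friedl-Vidussi(2015)}, is that after this single cover every class is quasi-fibered: a rational class in the interior of a fibered cone is fibered, a rational class on the boundary of such a cone is a limit of rational interior classes hence quasi-fibered, and an irrational class is approximated by rational classes which are themselves each quasi-fibered, so it is quasi-fibered too (a limit of limits of fibered classes is a limit of fibered classes by a diagonal argument). Finally, replacing $\widehat{N}_0$ by the cover corresponding to the normal core of $\pi_1(\widehat{N}_0)$ in $\pi_1(N)$ makes the cover regular, and as noted above quasi-fiberedness is inherited under this further pullback; this gives the desired $p \colon \widehat{N} \to N$.

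The main obstacle is the passage from Agol's statement, which is phrased one fibered face at a time and yields fiberedness only for classes in the cones over those faces, to the uniform quasi-fiberedness of \emph{all} real classes on a \emph{single} cover. Two subtleties must be handled: (i) that the fibered cones of the Thurston norm, together with their boundaries, exhaust $H^1(N;\IR)$ (this uses that for a prime $3$-manifold with $b_1 \ge 2$ that fibers, the fibered faces are top-dimensional and, by the RFRS/virtually-fibered hypothesis transported through the cover, every top-dimensional face becomes fibered — this is where Agol's theorem does the real work), and (ii) the diagonal approximation argument showing that the closure of the set of fibered classes is closed under the operation ``limit points of limit points,'' so irrational classes on cone boundaries are still captured. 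Neither subtlety is deep once Agol's theorem is granted, which is why the statement is described in the excerpt as ``a straightforward consequence''; the substantive content is entirely in~\cite[Theorem~5.1]{Agol(2008)}.
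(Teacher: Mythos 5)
The paper does not prove this statement; it is presented as a ``straightforward consequence'' of Agol's Virtual Fibering Theorem with pointers to Friedl-Vidussi~\cite[Corollary~5.2]{Friedl-Vidussi(2015)}, where exactly this result appears, and to Kielak~\cite{Kielak(2020fibring)}. Your outline --- reduce to the $\operatorname{RFRS}$ case and dispose of $S^1\times S^2$ separately, run Agol once per top-dimensional face of the Thurston norm ball, pass to a common cover, regularize via a normal core in $\pi_1(N)$, and handle irrational and boundary classes by approximation --- has the right shape and is in the spirit of the cited proof. But the central step has a genuine gap, and the justification you offer for it is incorrect. You claim that on the common cover ``by Thurston's theory the fibered faces' cones cover a dense open subset and their closures cover everything.'' Thurston's theory asserts nothing of the kind: it tells you that the fibered classes form a union of open cones over certain top-dimensional faces, but gives no control over which faces (if any) are fibered; many prime $3$-manifolds have no fibered faces at all.

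What Agol's theorem delivers, after passing to a common cover $\widehat N_0$, is only that for each top-dimensional cone $C_F$ over a face $F$ of $B_{x_N}$ the single chosen rational class $\phi_F\in\operatorname{int}(C_F)$ pulls back into the closure of some fibered cone of $B_{x_{\widehat N_0}}$. To promote this to the statement that \emph{every} $\psi\in C_F$ has $p^*\psi$ in the closure of a fibered cone you need that membership in the relative interior of a fixed face is preserved by $p^*$. This rests on Gabai's multiplicativity of the Thurston norm under finite covers~\eqref{finite_coverings_Thurston_norm}, which identifies $B_{x_{\widehat N_0}}\cap p^*H^1(N;\IR)$ with a rescaling of $p^*B_{x_N}$, together with a short polytope-geometry argument matching the minimal face of $B_{x_{\widehat N_0}}$ containing $p^*\psi$ with the one containing $p^*\phi_F$. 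Your sketch never invokes Gabai's theorem and does not carry out this face-matching; instead it misattributes the conclusion to Thurston, which is where the argument breaks. That missing step is precisely the nontrivial content of Friedl-Vidussi's corollary and is the one part of the deduction that is not purely formal. (A smaller point of phrasing: Agol's theorem is applied to a cohomology class, not to a ``fibered face of the Thurston norm ball'' of $N$, and $N$ need have no fibered faces to begin with; the faces you iterate over should simply be the top-dimensional faces of $B_{x_N}$, whether or not they are fibered.)
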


The following theorem was proved by Agol~\cite{Agol(2013)} and
Wise~\cite{Wise(2012raggs),Wise(2012hierachy)} in the hyperbolic case.  It was proved by
Liu~\cite{Liu(2013)} and Przytycki-Wise~\cite{Przytycki-Wise(2014)} for graph manifolds
with boundary and it was proved by Przytycki-Wise~\cite{Przytycki-Wise(2012)} for
manifolds with a non-trivial Jaco-Shalen-Johannson decomposition and at least one
hyperbolic piece in the JSJ decomposition.

\begin{theorem}\label{thm:virtrfrs}
  If $N$ is a prime $3$-manifold that is not a closed graph manifold, then $\pi_1(N)$ is
  virtually $\operatorname{RFRS}$.
\end{theorem}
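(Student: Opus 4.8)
The plan is to route everything through the notion of a \emph{virtually compact special} group in the sense of Haglund-Wise, i.e.\ a group with a finite-index subgroup isomorphic to the fundamental group of a compact special cube complex. Three facts turn this into the actual goal. First, Haglund-Wise show that the fundamental group of a compact special cube complex embeds as a subgroup of a right-angled Artin group. Second, Agol's work on criteria for virtual fibering shows that every right-angled Artin group is $\operatorname{RFRS}$. Third, the class of $\operatorname{RFRS}$ groups is closed under passage to subgroups (a direct check on the defining filtration, so that intersecting the $\pi_i$ with a subgroup works), hence so is the class of virtually $\operatorname{RFRS}$ groups. Putting these together, a virtually compact special group is virtually $\operatorname{RFRS}$, so I would reduce the theorem to showing that $\pi_1(N)$ is virtually compact special --- or, in a handful of degenerate cases, exhibit a $\operatorname{RFRS}$ finite-index subgroup by hand.

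First I would dispose of the easy cases. If $\pi_1(N)$ is finite (a spherical space form), the trivial subgroup is $\operatorname{RFRS}$. If $N = S^1 \times S^2$, then $\pi_1(N)\cong\IZ$, and if $N$ has compressible boundary then primeness and irreducibility force $N$ to be a ball or a handlebody, so $\pi_1(N)$ is trivial or free; cyclic, trivial and free groups are right-angled Artin groups. If $N$ is Seifert fibered with nonempty incompressible boundary, then $N$ is finitely covered by $F \times S^1$ with $F$ a compact surface with boundary, so $\pi_1(N)$ is virtually $F_n \times \IZ$, again a right-angled Artin group (and this case is anyway subsumed below). A closed Seifert manifold is either spherical or a closed graph manifold, the latter excluded by hypothesis. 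Thus, invoking the Geometrization Conjecture and the Jaco-Shalen-Johannson splitting, I may assume $N$ is irreducible with infinite $\pi_1(N)$ and empty or incompressible toral boundary, and that $N$ is of one of three types: (a) hyperbolic of finite volume (closed or with toral cusps); (b) a graph manifold with nonempty boundary; (c) a manifold with nontrivial Jaco-Shalen-Johannson splitting having at least one hyperbolic piece.

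For case (a) the plan is to cubulate and then appeal to virtual specialness. The surface-subgroup theorems of Kahn-Markovic (closed) and their relative analogues (cusped) provide an abundance of quasiconvex codimension-one subgroups, and Sageev's construction, in the form carried out by Bergeron-Wise, yields a proper cocompact action of $\pi_1(N)$ on a $\mathrm{CAT}(0)$ cube complex (in the cusped case one works relatively hyperbolically and controls the cusp subgroups). Then Agol's theorem --- a hyperbolic group acting properly cocompactly on a $\mathrm{CAT}(0)$ cube complex is virtually special, via Wise's Malnormal Special Quotient Theorem and the resulting special quotient of a finite-index subgroup together with Agol's separability criterion --- finishes it; the cusped case is covered by Wise's theory of groups with a quasiconvex hierarchy. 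For case (c), each Seifert piece and each finite-volume hyperbolic piece is virtually special by (a) and the surface-group computation, and the step is to glue these cubulations along the JSJ tori as Przytycki-Wise do, the point being that the tori sit inside the cube complexes of the adjacent pieces as virtually embedded, mutually compatible hyperplane-like subspaces, so that Agol's criterion applies to all of $\pi_1(N)$. Case (b) is the theorem of Liu and, independently, Przytycki-Wise; here one builds the cubulation directly from the horizontal and vertical structures on the Seifert pieces together with the gluing along boundary tori.

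I expect the main obstacle to be precisely the two inputs that carry all the depth, both of which I would treat as a black box in a survey: Agol's virtual specialness theorem for cubulated hyperbolic groups, which rests on Wise's Malnormal Special Quotient Theorem and the full quasiconvex-hierarchy machinery (case (a)); and the equivariant compatibility of the piecewise cubulations along the JSJ tori that makes the gluing in case (c) go through. Modulo these, the proof is just the case-by-case assembly described above, and the hypothesis ``not a closed graph manifold'' is exactly what guarantees that every such $N$ falls under (a), (b), (c), or one of the trivial cases.
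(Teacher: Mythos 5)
Your proposal takes essentially the same route as the paper, which does not give a proof but merely cites the three constituent theorems: Agol and Wise for the hyperbolic case, Liu and Przytycki--Wise for graph manifolds with boundary, and Przytycki--Wise for the mixed case. Your reduction through ``virtually compact special $\Rightarrow$ embeds virtually in a right-angled Artin group $\Rightarrow$ virtually $\operatorname{RFRS}$'' together with closure of $\operatorname{RFRS}$ under subgroups is the standard unpacking of how these results yield virtual $\operatorname{RFRS}$-ness, and the subgroup-closure verification you sketch does go through (the image of a torsion element of $H_1(H\cap\pi_i)$ in $H_1(\pi_i)$ is again torsion).

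One minor inaccuracy in your treatment of the degenerate cases: a prime irreducible $3$-manifold with compressible boundary need not be a $3$-ball or a handlebody. For instance, attaching a $1$-handle to the boundary torus of a finite-volume cusped hyperbolic $3$-manifold $M$ produces an irreducible manifold with compressible genus-$2$ boundary and $\pi_1\cong \pi_1(M)\ast\IZ$, which is neither trivial nor free. The correct reduction in the compressible-boundary case is via a maximal compression body: $N$ is obtained from a (possibly disconnected) manifold with incompressible boundary by attaching $1$-handles, so $\pi_1(N)$ is a free product of the fundamental groups of the incompressible-boundary pieces together with a free group; one then needs that the class of virtually compact special (or virtually $\operatorname{RFRS}$) groups is closed under finite free products, which holds but is an additional step. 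This does not affect your main three cases (a), (b), (c), whose assembly is exactly what the paper's citations amount to.
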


This implies that any hyperbolic $3$-manifold $M$ has a finite covering
$p \colon \overline{M} \to M$ such that $\overline{M}$ fibers over $S^1$ in the sense that
there exists a locally trivial fiber bundle $\overline{M} \to S^1$ with a compact
$2$-manifold as fiber.


\subsection{Topological rigidity}%
\label{subsec:Topological_rigidity}

The fundamental group plays a dominant role in the theory of $3$-manifolds. Besides
Kneser's Conjecture this is illustrated by the following discussion of topological
rigidity.

By the Sphere Theorem~\cite[Theorem~4.3]{Hempel(1976)}, an irreducible 3-manifold is
\emph{aspherical}, i.e., all its higher homotopy groups vanish, if and only if it is a
3-disk or has infinite fundamental group. If $M$ and $N$ are two aspherical closed
$3$-manifolds, then they are homeomorphic if and only if their fundamental groups are
isomorphic.  Actually, every isomorphism between their fundamental groups is induced by a
homeomorphism.  More generally, every $3$-manifold $N$ with torsionfree fundamental group
group is topologically rigid in the sense that any homotopy equivalence of closed
$3$-manifolds with $N$ as target is homotopic to a homeomorphism.  This follows from
results of Waldhausen, see Hempel~\cite[Lemma~10.1 and Corollary~13.7]{Hempel(1976)} and
Turaev~\cite{Turaev(1988)}, as explained for instance~\cite[Section~5]{Kreck-Lueck(2009nonasph)}.


\subsection{On the fundamental groups of $3$-manifolds}%
\label{subsec:On_the_fundamental_groups_of_3-manifolds}

The fundamental group of a closed manifold is finitely presented.  Fix a natural number
$d \ge 4$. Then a group $G$ is finitely presented if and only if it occurs as fundamental
group of a closed orientable $d$-dimensional manifold. This is not true in dimension $3$.
A detailed exposition about the problem, which finitely presented groups occur as
fundamental groups of closed $3$-manifolds, can be found
in~\cite{Aschenbrenner-Friedl-Wilton(2015)}. For us it will be important that the
fundamental group of any $3$-manifold is residually finite, This follows from~\cite{Hempel(1987)}
and the proof of the Geometrization Conjecture. More information about fundamental groups of
$3$-manifolds can be found for instance in~\cite{Aschenbrenner-Friedl-Wilton(2015)}.


\subsection{The Thurston norm and the dual Thurston polytope}%
\label{subsec:The_Thurston_norm_and_the_dual_Thurston_polytope}

Let $M$ be a compact oriented $3$-manifold.  Recall the definition
in~\cite{Thurston(1986norm)} of the \emph{Thurston norm} $x_M(\phi)$ of a $3$-manifold $M$
and an element $\phi\in H^1(M;\IZ)=\operatorname{Hom}(\pi_1(M),\IZ)$:
\[
  x(\phi)–:=\min \{ \chi_-(F)\, | \, F \subset M \textup{ properly embedded surface dual
    to }\phi\},
\]
where, given a surface $F$ with connected components $F_1, F_2, \ldots , F_k$, we define
\[\chi_-(F)~=~\sum_{i=1}^k \max\{-\chi(F_i),0\}.\]

Thurston~\cite{Thurston(1986norm)} showed that this defines a seminorm on
$H^1(M;\mathbb{Z})$ which can be extended to a seminorm on $H^1(M;\mathbb{R} )$ which we
also denote by $x_M$.  In particular we get for $r \in \IR$ and $\phi \in H^1(M;\IR)$
\begin{eqnarray}
  x_M(r \cdot \phi) 
  & = & 
        |r| \cdot x_M(\phi).
        \label{scaling_Thurston_norm}
\end{eqnarray}
If $p \colon \widetilde{M} \to M$ is a finite covering with $n$ sheets, then
Gabai~\cite[Corollary~6.13]{Gabai(1983)} showed that
\begin{eqnarray}
  x_{\widetilde{M}}(p^*\phi) 
  & = & 
        n \cdot x_M(\phi).
        \label{finite_coverings_Thurston_norm}
\end{eqnarray}
If $F \to M \xrightarrow{p} S^1$ is a fiber bundle for a $3$-manifold $M$ and compact
surface $F$, and $\phi \in H^1(M;\IZ)$ is given by
$H_1(p) \colon H_1(M) \to H_1(S^1)=\IZ$, then by~\cite[Section~3]{Thurston(1986norm)} we
have
\begin{eqnarray}
  x_M(\phi) & = & 
                  \begin{cases}
                    - \chi(F), & \text{if} \;\chi(F) \le 0;
                    \\
                    0, & \text{if} \;\chi(F) \ge 0.
                  \end{cases}
                         \label{x_for_fiber_bundles}
\end{eqnarray}

We refer to
\begin{equation}
  B_{x_M}\;  := \; \{\phi\in H^1(M;\IR)\,|\, x_M(\phi)\leq 1\}
  \label{Thurston_ball}
\end{equation}
as the \emph{Thurston norm ball}.  In the sequel we will identify
$H^1(M;\IR) = H_1(M;\IR)^*$ and $V = V^{**}$ for a finite-dimensional real representation $V$
by the obvious isomorphisms. Then there is a
notion of a polytope dual to $B_{x_M}^*$,
see~\cite[Subsection~3.5]{Friedl-Lueck(2017universal)} and we define the \emph{dual
  Thurston polytope}
\begin{equation}
  T(M)^*:=B_{x_M}^*\subset (H^1(M;\IR))^*= H_1(M;\IR).
  \label{dual_Thurston_polytope} 
\end{equation}
Explicitly it is given by
\[
  T(M)^* = \{v \in H_1(M;\IR) \mid \phi(v) \le x_M(\phi) \; \text{for all} \; \phi \in
  H^1(M;\IR)\}.
\]
Thurston~\cite[Theorem~2 on page~106 and first paragraph on page~107]{Thurston(1986norm)}
has shown that $T(M)^*$ is an integral polytope, i.e, the convex hull of finitely many
points in the integral lattice $H_1(M;\IZ)/\mbox{torsion} \subseteq H_1(M;\IR)$.

A \emph{marking} for a polytope is a (possibly empty) subset of the set of its
vertices. We conclude from Thurston~\cite[Theorem~5]{Thurston(1986norm)} that we can equip
$T(M)^*$ with a marking such that $\phi\in H^1(M;\IR)$ is fibered if and only if it pairs
maximally with a marked vertex, i.e., there exists a marked vertex $v$ of $T(M)^*$, such
that $\phi(v) >\phi(w)$ for any vertex $w\ne v$.


\typeout{------------------------ Section 2: Brief Survey on $L^2$-invariants
  ------------------------}

\section{Brief survey on $L^2$-invariants}\label{sec:Brief_survey_on_L2-invariants}


\subsection{Group von Neumann algebras}%
\label{subsec:Group_von_Neumann_algebras}

Denote by $L^2(G)$ the Hilbert space $L^2(G)$ consisting of formal sums
$\sum_{g \in G} \lambda_g \cdot g$ for complex numbers $\lambda_g$ such that
$\sum_{g \in G} |\lambda_g|^2 < \infty$.  This is the same as the Hilbert space completion
of the complex group ring $\IC G$ with respect to the pre-Hilbert space structure for
which $G$ is an orthonormal basis.  Note that left multiplication with elements in $G$
induces an isometric $G$-action on $L^2(G)$. Given a Hilbert space $H$, denote by
$\calb(H)$ the $C^*$-algebra of bounded operators from $H$ to itself, where the norm is
the operator norm and the involution is given by taking adjoints.

\begin{definition}[Group von Neumann algebra]\label{def:group_von_Neumann_algebra}
  The \emph{group von Neumann algebra} $\caln(G)$ of the group $G$ is defined as the
  algebra of $G$-equivariant bounded operators from $L^2(G)$ to $L^2(G)$
  \begin{eqnarray*}
    \caln(G)
    & := &
           \calb(L^2(G))^G.
  \end{eqnarray*}
\end{definition}

In the sequel we will view the complex group ring $\IC G$ as a subring of $\caln(G)$ by
the embedding of $\IC$-algebras $\rho_r \colon \IC G \to \caln(G)$ which sends $g \in G$
to the $G$-equivariant operator $r_{g^{-1}} \colon L^2(G) \to L^2(G)$ given by right
multiplication with $g^{-1}$.

\begin{example}[The von Neumann algebra of a finite group]\label{exa:group_von_Neumann_algebra_of_a_finite_group}
  If $G$ is finite, then nothing happens, namely $\IC G = L^2(G) = \caln(G)$.
\end{example}

\begin{example}[The von Neumann algebra of $\IZ^d$]\label{exa:group_von_neumann_algebra_of_Zd}
  In general there is no concrete model for $\caln(G)$.  However, for $G = \IZ^d$, there
  is the following illuminating model for the group von Neumann algebra $\caln(\IZ^d)$.
  Let $L^2(T^d)$ be the Hilbert space of equivalence classes of $L^2$-integrable
  complex-valued functions on the $d$-dimensional torus $T^d$, where two such functions
  are called equivalent if they differ only on a subset of measure zero.  Define the ring
  $L^{\infty}(T^d)$ by equivalence classes of essentially bounded measurable functions
  $f\colon T^d \to \IC$, where essentially bounded means that there is a constant $C > 0$
  such that the set $\{x \in T^d\mid |f(x)| \ge C\}$ has measure zero. An element
  $(k_1, \ldots , k_d)$ in $\IZ^d$ acts isometrically on $L^2(T^d)$ by pointwise
  multiplication with the function $T^d \to \IC$, which maps $(z_1, z_2, \ldots, z_d)$ to
  $z_1^{k_1} \cdot \cdots \cdot z_d^{k_d}$.  The Fourier transform yields an isometric
  $\IZ^d$-equivariant isomorphism $L^2(\IZ^d) \xrightarrow{\cong} L^2(T^d)$.  We conclude
  $\caln(\IZ^d) = \calb(L^2(T^d))^{\IZ^d}$.  We obtain an isomorphism of $C^*$-algebras
  \[
    L^{\infty}(T^d) \xrightarrow{\cong} \caln(\IZ^d)
  \]
  by sending $f \in L^{\infty}(T^d)$ to the $\IZ^d$-equivariant operator
  $M_f\colon L^2(T^d) \to L^2(T^d), \; g \mapsto g \cdot f,$ where $(g\cdot f)(x)$ is
  defined by $g(x)\cdot f(x)$.
\end{example}


\subsection{The von Neumann dimension}%
\label{subsec:The_von_Neumann_dimension}

An important feature of the group von Neumann algebra is its trace.

\begin{definition}[Von Neumann trace]\label{def:trace_of_the_group_von_Neumann_algebra}
  The \emph{von Neumann trace} on $\caln(G)$ is defined by
  \[
    \tr_{\caln(G)}\colon \caln(G) \to \IC, \quad f \mapsto \langle f(e),e\rangle_{L^2(G)},
  \]
  where $e \in G \subseteq L^2(G)$ is the unit element.
\end{definition}

\begin{definition}[Finitely generated Hilbert module]\label{def:finitely_generated_Hilbert_module}
  A finitely generated \emph{Hilbert $\caln(G)$-module} $V$ is a Hilbert space $V$
  together with a linear isometric $G$-action such that there exists an isometric linear
  $G$-embedding of $V$ into $L^2(G)^r$ for some natural number $r$.  A \emph{morphism of
    Hilbert $\caln(G)$-modules} $f\colon V \to W$ is a bounded $G$-equivariant operator.
\end{definition}

\begin{definition}[Von Neumann dimension]\label{def:von_Neumann_dimension}
  Let $V $ be a finitely generated Hilbert $\caln(G)$-module.  Choose a matrix
  $A = (a_{i,j}) \in M_{r,r}(\caln(G))$ with $A^2 = A$ such that the image of the
  $G$-equivariant bounded operator $r_A^{(2)} \colon L^2(G)^r \to L^2(G)^r$ given by $A$
  is isometrically $G$-isomorphic to $V$.  Define the \emph{von Neumann dimension} of $V$
  by
  \begin{eqnarray*}
    \dim_{\caln(G)}(V)
    & := & \sum_{i=1}^r \tr_{\caln(G)}(a_{i,i})
           \quad \in \IR^{\ge 0}.
  \end{eqnarray*}
\end{definition}

The von Neumann dimension $\dim_{\caln(G)}(V)$ depends only on the isomorphism class of
the Hilbert $\caln(G)$-module $V$ but not on the choice of $r$ and the matrix $A$.  The
von Neumann dimension $\dim_{\caln(G)}$ is \emph{faithful}, i.e.
$\dim_{\caln(G)}(V) = 0 \Leftrightarrow V= 0$ holds for any finitely generated Hilbert
$\caln(G)$-module $V$. It is weakly exact in the following sense, see~\cite[Theorem~1.12
on page~21]{Lueck(2002)}.

\begin{lemma}\label{lem:weak_exactness}
  Let $0 \to V_0 \xrightarrow{i} V_1 \xrightarrow{p} V_2 \to 0$ be a sequence of finitely
  generated Hilbert $\caln(G)$-modules.  Suppose that it is weakly exact, i.e., $i$ is
  injective, the closure of $i$ is the kernel of $p$ and the image of $p$ is dense.  Then
  \[
    \dim_{\caln(G)}(V_1) = \dim_{\caln(G)}(V_0) + \dim_{\caln(G)}(V_0).
  \]
\end{lemma}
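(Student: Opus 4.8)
The statement as displayed contains an obvious typo (the right-hand side should read $\dim_{\caln(G)}(V_0) + \dim_{\caln(G)}(V_2)$), and the plan is to prove that corrected equality. The strategy is to turn the weakly exact sequence into an honest orthogonal splitting of $V_1$, after replacing $i$ and $p$ by unitary $\caln(G)$-isomorphisms via polar decomposition. Two preliminary observations will be needed. First, von Neumann dimension is additive over internal orthogonal direct sums: if a finitely generated Hilbert $\caln(G)$-module $V$ is the orthogonal direct sum $V = V' \oplus V''$ of two closed $G$-invariant subspaces, then choosing idempotent matrices $A'$, $A''$ realizing $V'$, $V''$ and forming the block-diagonal idempotent $\operatorname{diag}(A',A'')$ realizes $V$ and shows $\dim_{\caln(G)}(V) = \dim_{\caln(G)}(V') + \dim_{\caln(G)}(V'')$, independently of choices by the remark after Definition~\ref{def:von_Neumann_dimension}. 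Second, a closed $G$-invariant subspace $W$ of a finitely generated Hilbert $\caln(G)$-module $V$ is again finitely generated: embed $V$ isometrically and $G$-equivariantly into some $L^2(G)^r$, and note that the orthogonal projection of $L^2(G)^r$ onto the closure of $W$ is a self-adjoint $G$-equivariant idempotent, hence lies in $M_{r,r}(\caln(G))$ and has image $\caln(G)$-isomorphic to $W$.

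The key intermediate step is the following invariance: if $f\colon V \to W$ is a morphism of finitely generated Hilbert $\caln(G)$-modules with $\ker(f) = 0$ and dense image, then $\dim_{\caln(G)}(V) = \dim_{\caln(G)}(W)$. I would prove this using the polar decomposition $f = u \circ |f|$ with $|f| := (f^*f)^{1/2}\colon V \to V$ positive and self-adjoint and $u$ a partial isometry with initial space $(\ker f)^\perp$ and final space $\overline{\im(f)}$. From $\||f|v\|^2 = \langle f^*f v, v\rangle = \|fv\|^2$ one gets $\ker|f| = \ker f = 0$, so $|f|$ is injective and, being self-adjoint, has dense image; hence the initial space of $u$ is all of $V$, and its final space is $\overline{\im(f)} = W$, so $u\colon V \to W$ is an isometry with dense image, i.e.\ a unitary isomorphism. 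Since each $g \in G$ acts as a unitary on $V$ and on $W$ and $f$ is $G$-equivariant, uniqueness of the polar decomposition forces $|f|$ and $u$ to be $G$-equivariant; thus $u$ is an isomorphism of Hilbert $\caln(G)$-modules and the two dimensions coincide.

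Now the main argument. Set $W_0 := \overline{\im(i)} \subseteq V_1$; by weak exactness $W_0 = \ker(p)$, and $W_0$ is a finitely generated Hilbert $\caln(G)$-submodule of $V_1$ by the second preliminary remark. The corestriction $i\colon V_0 \to W_0$ is injective with dense image, so $\dim_{\caln(G)}(V_0) = \dim_{\caln(G)}(W_0)$ by the invariance step. Let $W_0^{\perp}$ be the orthogonal complement of $W_0$ in $V_1$; it is closed and $G$-invariant, and $V_1 = W_0 \oplus W_0^{\perp}$ orthogonally, so $\dim_{\caln(G)}(V_1) = \dim_{\caln(G)}(W_0) + \dim_{\caln(G)}(W_0^{\perp})$. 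Finally, consider $p|_{W_0^{\perp}}\colon W_0^{\perp} \to V_2$. It is injective since $W_0^{\perp} \cap \ker(p) = W_0^{\perp} \cap W_0 = 0$, and its image is dense since $p(W_0) = 0$ gives $p(W_0^{\perp}) = p(V_1)$, which is dense in $V_2$ by hypothesis; hence $\dim_{\caln(G)}(W_0^{\perp}) = \dim_{\caln(G)}(V_2)$. Combining the three displayed equalities yields $\dim_{\caln(G)}(V_1) = \dim_{\caln(G)}(V_0) + \dim_{\caln(G)}(V_2)$.

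The genuinely delicate point is the polar-decomposition step together with its $G$-equivariance (and, relatedly, the fact that closed $G$-invariant subspaces are finitely generated and that von Neumann dimension is a weak-isomorphism invariant); once these are in place, the rest is routine bookkeeping with orthogonal complements. Some care is also required to verify that $|f|$ and $u$ are honest $\caln(G)$-morphisms — this follows from boundedness together with uniqueness of the polar decomposition under conjugation by the unitary $G$-action.
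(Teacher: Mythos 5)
Your proof is correct and follows the standard argument found in the cited reference \cite[Theorem~1.12]{Lueck(2002)}; the paper itself gives no proof of this lemma and merely cites the book, and the textbook argument proceeds exactly as you do: show weak isomorphisms of finitely generated Hilbert $\caln(G)$-modules preserve von Neumann dimension via polar decomposition, then apply this to $V_0 \to \ker p$ and to $(\ker p)^{\perp} \to V_2$ after the orthogonal splitting $V_1 = \ker p \oplus (\ker p)^{\perp}$. You also correctly spotted both typos in the statement as printed (the right-hand side should read $\dim_{\caln(G)}(V_0)+\dim_{\caln(G)}(V_2)$, and ``the closure of $i$'' should be ``the closure of the image of $i$'').
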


\begin{example}[Von Neumann dimension for finite groups]\label{exa:Von_Neumann_dimension_for_finite_groups}
  If $G$ is finite, then $\dim_{\caln(G)}(V)$ is $\frac{1}{|G|}$-times the complex
  dimension of the underlying complex vector space $V$.
\end{example}

\begin{example}[Von Neumann dimension for $\IZ^d$]\label{exa:Von_Neumann_dimension_for_Zd}
  Let $X \subset T^d$ be any measurable set and $\chi_X \in L^{\infty}(T^d)$ be its
  characteristic function. Denote by $M_{\chi_X}\colon L^2(T^d) \rightarrow L^2(T^d)$ the
  $\IZ^d$-equivariant unitary projection given by multiplication with $\chi_X$. Its image
  $V$ is a Hilbert $\caln(\IZ^d)$-module with $\dim_{\caln(\IZ^d)}(V) = \vol(X)$.
\end{example}


\subsection{Weak isomorphisms}%
\label{subsec:Weak_isomorphisms}

A bounded $G$-equivariant operator $f \colon L^2(G)^r \to L^2(G)^s$ is called a \emph{weak
  isomorphism} if and only if it is injective and its image is dense.  If there exists a
weak isomorphism $L^2(G)^r \to L^2(G)^s$, then we must have $r = s$ by
Lemma~\ref{lem:weak_exactness}.  The following statements are equivalent for a bounded
$G$-equivariant operator $f \colon L^2(G)^r \to L^2(G)^r$, see~\cite[Lemma~1.13 on
page~23]{Lueck(2002)}:

\begin{enumerate}

\item $f$ is a weak isomorphism;

\item Its adjoint $f^*$ is a weak isomorphism;

\item $f$ is injective;

\item $f$ has dense image;

\item The von Neumann dimension of the closure of the image of $f$ is $r$.

\end{enumerate}


\subsection{The Fuglede-Kadison determinant}%
\label{subsec:The_Fuglede-Kadison_Determinant}

\begin{definition}[Spectral density function]\label{def:spectral_density_function}
  Let $f \colon V \to W$ be a morphisms of finitely generated Hilbert $\caln(G)$-modules.
  Denote by $\{E_{\lambda}^{f^*f} \mid \lambda \in \IR\}$ the (right-continuous) family of
  spectral projections of the positive operator $f^*f$.  Define the \emph{spectral density
    function of} $f$ by
  \[
    F_f \colon \IR \to \IR^{\ge 0} \quad \lambda \mapsto
    \dim_{\caln(G)}\bigl(\im(E_{\lambda^2}^{f^*f})\bigr) =  \tr_{\caln(G)}(E_{\lambda^2}^{f^*f}).
  \]
\end{definition}

The spectral density function is monotone non-decreasing and right-continuous.  We have
$F(0) = \dim_{\caln(G)}(\ker(f))$.

\begin{example}[Spectral density function for finite groups]\label{exa:spectral_density_function_for_finite_groups}
  Let $G$ be finite and $f\colon U \to V$ be a morphism of finitely generated Hilbert
  $\caln(G)$-modules, i.e., of finite-dimensional unitary $G$-representations.  Then
  $F(f)$ is the right-continuous step function whose value at $\lambda$ is the sum of the
  complex dimensions of the eigenspaces of $f^*f$ for eigenvalues $\mu \le \lambda^2$
  divided by the order of $G$, or, equivalently, the sum of the complex dimensions of the
  eigenspaces of $|f|$ for eigenvalues $\mu \le \lambda$ divided by the order of $G$.
\end{example}

\begin{example}[Spectral density function for $\IZ^d$]\label{exa:spectral_density_function_for_Zd}
  Let $G = \IZ^d$. In the sequel we use the notation and the identification
  $\caln(\IZ^d) = L^{\infty}(T^d)$ of Example~\ref{exa:group_von_neumann_algebra_of_Zd}.
  For $f \in L^{\infty}(T^d)$ the spectral density function $F(M_f)$ of
  $M_f\colon L^2(T^d) \to L^2(T^d)$ sends $\lambda$ to the volume of the set
  $\{z \in T^d \mid |f(z)| \le \lambda\}$.
\end{example}

\begin{definition}[Fuglede-Kadison determinant]\label{def:Fuglede-Kadison_determinant}
  Let $f\colon V \to W$ be a morphism of finitely generated Hilbert $\caln(G)$-modules.
  Let $F_f(\lambda)$ be the spectral density function of
  Definition~\ref{def:spectral_density_function} which is a monotone non-decreasing
  right-continuous function. Let $dF$ be the unique measure on the Borel $\sigma$-algebra
  on $\IR$ which satisfies $dF((a,b]) = F(b)-F(a)$ for $a < b$.  Define the
  \emph{Fuglede-Kadison determinant}
  \[
    {\det}_{\caln(G )}(f) \in \IR^{\ge 0}
  \]
  to be the positive real number
  \[ {\det}_{\caln(G )}(f) = \exp\left(\int_{0+}^{\infty} \ln(\lambda) \; dF\right),
  \]
  if the Lebesgue integral $\int_{0+}^{\infty} \ln(\lambda) \; dF$ converges to a real
  number, and to be  $0$ otherwise.
\end{definition}

Note that in the definition above we do not require that the source and domain of $f$
agree or that $f$ is injective or that $f$ is surjective. Our conventions imply that the
Fulgede-Kadison operator of the zero operator $0 \colon V \to W$ is $1$.

\begin{example}[Fuglede-Kadison determinant for finite groups]\label{exa:det_for_finite_groups}
  To illustrate this definition, we look at the example where $G$ is finite. We
  essentially get the classical determinant $\det_{\IC}$. Namely, let $\lambda_1$,
  $\lambda_2$, $\ldots$, $\lambda_r$ be the non-zero eigenvalues of $f^{\ast}f$ with
  multiplicity $\mu_i$. Then one obtains, if $\overline{f^{\ast}f}$ is the automorphism of
  the orthogonal complement of the kernel of $f^{\ast}f$ induced by $f^{\ast}f$,
  \begin{multline*} {\det}_{\caln(G)}(f) = \exp\left(\sum_{i = 1}^r \frac{\mu_i}{|G|}
      \cdot \ln(\sqrt{\lambda_i})\right) = \prod_{i=1}^r \lambda_i^{\frac{\mu_i}{2\cdot
        |G|}} = {\det}_{\IC}\bigl(\overline{f^{\ast}f}\big)^{\frac{1}{2\cdot |G|}},
  \end{multline*}
  where ${\det}_{\IC}\bigl(\overline{f^{\ast}f})$ is put to be $1$ of $f$ is the zero
  operator and hence $\overline{f^{\ast}f}$ is $\id \colon \{0\} \to \{0\}$.  If
  $f \colon \IC G^m \to \IC G^m$ is an automorphism, we get
  \[ {\det}_{\caln(G)}(f) = \left|{\det}_{\IC}(f)\right|^{\frac{1}{|G|}}.
  \]
\end{example}

\begin{example}[Fuglede-Kadison determinant for $\caln(\IZ^d)$]\label{exa:Fuglede-Kadison_determinant_for_G_is_Zd}
  Let $G = \IZ^d$. We use the identification $\caln(\IZ^d) = L^{\infty}(T^d)$ of
  Example~\ref{exa:group_von_neumann_algebra_of_Zd}.  For $f \in L^{\infty}(T^d)$ we
  conclude from Example~\ref{exa:spectral_density_function_for_Zd}
  \[ {\det}_{\caln(\IZ^d)}\left(M_f\colon L^2(T^d) \to L^2(T^d)\right) =
    \exp\left(\int_{T^d} \ln(|f(z)|) \cdot \chi_{\{u \in T^d\mid f(u) \not= 0\}} \;
      \operatorname{dvol}_z\right)
  \]
  using the convention $\exp(-\infty) = 0$.
\end{example}

Let $i\colon H \rightarrow G$ be an injective group homomorphism.  Let $V$ be a finitely
generated Hilbert $\caln(H)$-module. There is an obvious pre-Hilbert structure on
$\IC G \otimes_{\IC H} V$ for which $G$ acts by isometries since $\IC G \otimes_{\IC H} V$
as a complex vector space can be identified with $\bigoplus_{G/H} V$. Its Hilbert space
completion is a finitely generated Hilbert $\caln (G)$-module and denoted by $i_*V$. A
morphism of finitely generated Hilbert $\caln(H)$-modules $f\colon V \to W$ induces a
morphism of finitely generated Hilbert $\caln(G)$-modules $i_*f\colon i_*V \to i_*W$.

The following theorem can be found with proof in~\cite[Theorem~3.14 on~page~128 and
Lemma~3.15~(4) on page~129]{Lueck(2002)}.

\begin{theorem}[Fuglede-Kadison determinant]\label{the:main_properties_of_det}\
  \begin{enumerate}
  \item\label{the:main_properties_of_det:composition} Let $f\colon U \to V$ and
    $g\colon V \to W$ be morphisms of finitely generated Hilbert $\caln(G)$-modules such
    that $f$ has dense image and $g$ is injective. Then
    \[ {\det}_{\caln(G)}(g \circ f) = {\det}_{\caln(G)}(f) \cdot {\det}_{\caln(G)}(g);
    \]

  \item\label{the:main_properties_of_det:additivity} Let $f_1\colon U_1 \to V_1$,
    $f_2\colon U_2 \to V_2$ and $f_3\colon U_2 \to V_1$ be morphisms of finitely generated
    Hilbert $\caln(G)$-modules such that $f_1$ has dense image and $f_2$ is
    injective. Then
    \[ {\det}_{\caln(G)}\squarematrix{f_1}{f_3}{0}{f_2} = {\det}_{\caln(G)}(f_1) \cdot
      {\det}_{\caln(G)}(f_2);
    \]

  \item\label{the:main_properties_of_det:det(f)_is_det(fast)} Let $f\colon U \to V$ be a
    morphism of finitely generated Hilbert $\caln(G)$-modules. Then
    \[ {\det}_{\caln(G)}(f) = {\det}_{\caln(G)}(f^*) = \sqrt{{\det}_{\caln(G)}(f^*f)} =
      \sqrt{{\det}_{\caln(G)}(ff^*)};
    \]

  \item\label{the:main_properties_of_det:restriction} Let $i \colon H \to G$ be the
    inclusion of a subgroup of finite index $[G:H]$. Let $i^* f\colon i^* U \to i^* V$ be
    the morphism of finitely generated Hilbert $\caln(H)$-modules obtained from $f$ by
    restriction. Then
    \begin{eqnarray*} {\det}_{\caln(H)}(i^* f) & = & {\det}_{\caln(G)}(f)^{[G:H]};
    \end{eqnarray*}

  \item\label{the:main_properties_of_det:induction} Let $i\colon H \to G$ be an injective
    group homomorphism and let $f\colon U \to V$ be a morphism of finitely generated
    Hilbert $\caln(H)$-modules.  Then
    \[ {\det}_{\caln(G)}(i_*f) = {\det}_{\caln(H)}(f).
    \]

  \end{enumerate}
\end{theorem}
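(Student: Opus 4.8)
The plan is to derive all five assertions from the spectral density function $F_f$ of Definition~\ref{def:spectral_density_function} and its Lebesgue--Stieltjes measure $dF_f$, using functional calculus in $\caln(G)$ together with the trace property of $\tr_{\caln(G)}$. Throughout I would use one basic reduction: since ${\det}_{\caln(G)}(f)$ only integrates over $(0,\infty)$ it ignores $\ker(f)$ and involves only $f^*f$, so under the hypotheses of each assertion the morphism $f\colon V\to W$ may be replaced by a weak isomorphism and then, since the invertible elements are dense in the finite von Neumann algebra $\caln(G)$, approximated by honest isomorphisms, on which ${\det}_{\caln(G)}(f)=\exp\bigl(\tfrac12\tr_{\caln(G)}(\log(f^*f))\bigr)$ with $\log(f^*f)$ obtained by ordinary (bounded) functional calculus.

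I would prove assertion~(\ref{the:main_properties_of_det:det(f)_is_det(fast)}) first, as it is essentially a change of variables. Functional calculus gives $E_{\lambda^2}^{f^*f}=E_{\lambda^4}^{(f^*f)^*(f^*f)}$, hence $F_{f^*f}(\mu)=F_f(\sqrt{\mu})$; substituting $\mu=\lambda^2$ in the defining integral yields $\int_{0+}^{\infty}\ln(\lambda)\,dF_f=\tfrac12\int_{0+}^{\infty}\ln(\mu)\,dF_{f^*f}$, i.e.\ ${\det}_{\caln(G)}(f)=\sqrt{{\det}_{\caln(G)}(f^*f)}$, and likewise ${\det}_{\caln(G)}(f^*)=\sqrt{{\det}_{\caln(G)}(ff^*)}$. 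The polar decomposition $f=u\,|f|$ with $|f|=(f^*f)^{1/2}$ gives a partial isometry $u$ restricting to a unitary from $\overline{\im(f^*)}$ onto $\overline{\im(f)}$ and conjugating $f^*f$ to $ff^*$ on these subspaces, so $dF_{f^*f}$ and $dF_{ff^*}$ agree on $(0,\infty)$ and therefore ${\det}_{\caln(G)}(f^*f)={\det}_{\caln(G)}(ff^*)$; combining the three identities gives the whole of assertion~(\ref{the:main_properties_of_det:det(f)_is_det(fast)}).

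For the multiplicativity~(\ref{the:main_properties_of_det:composition}), which is the heart of the theorem, I would first note that $\GL_n(\caln(G))$ is path connected: any $X$ has polar decomposition $X=U|X|$ with $U$ unitary (since $\caln(G)$ is finite), and the concatenation of $t\mapsto e^{tH}$ with $H=\log|X|$ and $t\mapsto e^{itK}$ with $e^{iK}=U$ obtained from a Borel branch of the logarithm joins $X$ to $\id$. Using the derivative formula $\tfrac{d}{dt}\tr_{\caln(G)}(\log Y_t)=\tr_{\caln(G)}(Y_t^{-1}\dot Y_t)$ for smooth paths of positive invertibles and cyclicity of the trace, one checks that for a fixed positive invertible $P$ and a path $X_t$ from $\id$ to $X$ the functions $\tr_{\caln(G)}(\log(X_t^*PX_t))$ and $\tr_{\caln(G)}(\log P)+\tr_{\caln(G)}(\log(X_t^*X_t))$ have the common derivative $2\,\Re\,\tr_{\caln(G)}(X_t^{-1}\dot X_t)$ and agree at $t=0$, hence at $t=1$. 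Feeding $P=g^*g$ and $X=f$ into assertion~(\ref{the:main_properties_of_det:det(f)_is_det(fast)}) gives $\det(g\circ f)=\det(f)\cdot\det(g)$ for invertible $f,g$. The passage to weak isomorphisms and then to arbitrary morphisms is by approximation from above ($P\rightsquigarrow P+\delta$, $f\rightsquigarrow u_f(|f|+\varepsilon)$), using that on positive operators ${\det}_{\caln(G)}$ is monotone (operator-monotonicity of $\log$ together with positivity of $\tr_{\caln(G)}$) and upper semicontinuous under norm convergence (Portmanteau against the upper semicontinuous function $\log$), so that it is continuous along such monotone approximations; I expect this analytic passage to be the main obstacle, everything else being bookkeeping. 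Assertion~(\ref{the:main_properties_of_det:additivity}) with $f_3=0$ is immediate from $F_{f_1\oplus f_2}=F_{f_1}+F_{f_2}$; for general $f_3$ one reduces, after passing to isomorphisms, to the factorisation
\[
  \squarematrix{f_1}{f_3}{0}{f_2}\;=\;\squarematrix{\id}{f_3 f_2^{-1}}{0}{\id}\cdot\squarematrix{f_1}{0}{0}{f_2}
\]
together with~(\ref{the:main_properties_of_det:composition}), once the unipotent factor has determinant $1$: along $t\mapsto\squarematrix{\id}{th}{0}{\id}$ the operator $Y^{-1}\dot Y$ is the nilpotent matrix $\squarematrix{0}{h}{0}{0}$ of trace $0$, so $\ln\det$ is constant equal to $0$ along the path.

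Finally, assertions~(\ref{the:main_properties_of_det:restriction}) and~(\ref{the:main_properties_of_det:induction}) follow by tracking $F_f$ under change of group. If $i\colon H\to G$ has finite index, then $\res_H L^2(G)\cong L^2(H)^{[G:H]}$ as Hilbert $\caln(H)$-modules, so by weak exactness of the dimension (Lemma~\ref{lem:weak_exactness}) $\dim_{\caln(H)}\circ\res_H=[G:H]\cdot\dim_{\caln(G)}$ on finitely generated Hilbert $\caln(G)$-modules; since $(i^*f)^*(i^*f)=i^*(f^*f)$ and restriction commutes with spectral projections, $F_{i^*f}=[G:H]\cdot F_f$, hence $dF_{i^*f}=[G:H]\,dF_f$ and ${\det}_{\caln(H)}(i^*f)={\det}_{\caln(G)}(f)^{[G:H]}$. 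For an arbitrary injection $i\colon H\to G$, both the embedding $\caln(H)\hookrightarrow\caln(G)$ and the induction functor $i_*$ preserve the von Neumann trace (read off at the unit element), hence preserve the von Neumann dimension, so $i_*$ carries $E_{\lambda^2}^{f^*f}$ to $E_{\lambda^2}^{(i_*f)^*(i_*f)}$ without changing its dimension; thus $F_{i_*f}=F_f$ on $\IR$, and ${\det}_{\caln(G)}(i_*f)={\det}_{\caln(H)}(f)$.
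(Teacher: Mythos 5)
The paper does not prove this theorem; it only cites~\cite[Theorem~3.14 and Lemma~3.15~(4)]{Lueck(2002)}, so the comparison must be against that reference. Your overall strategy coincides with the one used there and in the original Fuglede--Kadison paper: first establish assertion~(3) by a change of variables in the spectral density function; derive the multiplicativity for invertible operators from the differential identity $\tfrac{d}{dt}\tr\log(X_t^*PX_t)=2\Re\,\tr(X_t^{-1}\dot X_t)$ and path-connectedness of $\GL_n(\caln(G))$; deduce the block-triangular formula by factoring off a unipotent; and handle restriction and induction by tracking the trace. All of those individual computations in your sketch check out, including the normalizations $F_{f^*f}(\mu)=F_f(\sqrt{\mu})$ and $F_{i^*f}=[G:H]\cdot F_f$, and the derivative computation via cyclicity.

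The genuine gap is exactly where you flagged it, and it is not merely ``bookkeeping.'' Your approximation scheme gives, for a \emph{single} weak isomorphism $f$ and for the monotone family $f_\varepsilon:=u_f(|f|+\varepsilon)$, the continuity $\det_{\caln(G)}(f_\varepsilon)\to\det_{\caln(G)}(f)$ as $\varepsilon\to 0^+$, because $f_\varepsilon^*f_\varepsilon=(|f|+\varepsilon)^2$ decreases to $|f|^2$. But to pass multiplicativity from invertibles to weak isomorphisms you need $\det_{\caln(G)}(g_\varepsilon f_\varepsilon)\to\det_{\caln(G)}(gf)$, and the family $g_\varepsilon f_\varepsilon$ is \emph{not} of the form $u_{gf}(|gf|+\delta)$: the operators $(g_\varepsilon f_\varepsilon)^*(g_\varepsilon f_\varepsilon)=f_\varepsilon^*g_\varepsilon^*g_\varepsilon f_\varepsilon$ are not a monotone decreasing family converging to $f^*g^*gf$, so neither the operator-monotonicity of $\log$ nor upper semicontinuity alone delivers the required limit. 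Your argument silently conflates ``the determinant is continuous along the monotone approximation of each factor'' with ``the determinant is continuous along the (non-monotone) approximation of the product.'' This is precisely the content of the technical estimates on spectral density functions in~\cite[Lemma~3.15~(5)--(6)]{Lueck(2002)} (in effect, a priori two-sided bounds on $F_{gf}$ in terms of $F_f$, $F_g$, and operator norms), which is the real work of the proof. The same caveat infects your proof of~(2), which relies on~(1) through the unipotent factorisation. Assertions~(3), (4), (5) are correct as sketched, modulo the unproblematic verification that $i_*$ is a trace-preserving $*$-functor commuting with bounded Borel functional calculus.

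One smaller imprecision worth fixing: the preliminary reduction ``$f$ may be replaced by a weak isomorphism'' should be made precise by passing to $\bar f\colon\ker(f)^\perp\to\overline{\im(f)}$ and observing that under the hypotheses of~(1) (dense image for $f$, injectivity for $g$) one has $\overline{\im(f)}=V=\ker(g)^\perp$, so $\overline{gf}=\bar g\circ\bar f$; without the density/injectivity hypotheses this composability fails and~(1) is simply false. Your sketch uses the hypotheses implicitly, but it is exactly at this step that they enter.
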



\subsection{$L^2$-Betti numbers and $L^2$-torsion of finite Hilbert $\caln(G)$-chain complexes}%
\label{subsec:L2-Betti_numbers_and_L2-torsion_of_finite_caln(G)-Hilbert_chain_complexes}

Let $G$ be a group and let
\[
  \cdots 0 \to 0 \to C_n^{(2)} \xrightarrow{c_n^{(2)}} C_{n-1}^{(2)}
  \xrightarrow{c_{n-1}^{(2)}} \cdots \xrightarrow{c_2^{(2)}} C_{1}^{(2)}
  \xrightarrow{c_1^{(2)}} C_{0}^{(2)} \to 0 \to \cdots
\]
be a finite $\caln(G)$-chain complex $(C_*^{(2)},c_*^{(2)})$, i.e., each $C_p^{(2)}$ is a
finitely generated Hilbert $\caln(G)$-module, each differential $c_p^{(2)}$ is a
$G$-equivariant bounded operator and there is a natural number $n$ such that
$C_p^{(2)} = 0$ for $p < 0$ and for $p > n$.  For $p \in \IN$, we define the finitely
generated Hilbert $\caln(G)$-module
\[
  H_p^{(2)}(C_*^{(2)}) := \ker(c_p^{(2)})/\overline{\im(c_{p+1}^{(2)})}.
\]
Note that we divide out the closure of the image of $c_{p+1}^{(2)}$ to ensure that we
indeed obtain a finitely generated Hilbert $\caln(G)$-module. Denote by
\begin{eqnarray}
  b_p^{(2)}(C_*^{(2)}) & := &\dim_{\caln(G)}(H^{(2)}_p(C_*^{(2)})) \in \IR^{\ge 0}
                              \label{b_p_upper_(2)(C_ast)_Hilbert}
\end{eqnarray}
the \emph{$p$-th $L^2$-Betti number} of $C_*^{(2)}$.  We say that the complex $C_*^{(2)}$ is
\emph{$L^2$-acyclic} if all its $L^2$-Betti numbers vanish.

Define the $p$th \emph{Laplace operator}
\[
  \Delta_p^{(2)} := c_{p+1}^{(2)} \circ (c_p^{(2)})^* + (c_{p-1}^{(2)})^* \circ
  c_p^{(2)}\colon C_p^{(2)} \to C_p^{(2)}.
\]
Then $b_p^{(2)}(C_*^{(2)}) = \dim_{\caln(G)}(\ker(\Delta_p^{(2)}))$, see~\cite[Lemma~1.18
on page~24]{Lueck(2002)}.  Hence $b_p^{(2)}(C_*^{(2)})$ vanishes if and only if
$\ker(\Delta_p^{(2)})$ is trivial, or, equivalently, $\Delta_p^{(2)}$ is a weak
isomorphism.

We call $C_*^{(2)}$ of \emph{determinant class} if $\det_{\caln(G)}(c_p^{(2)}) > 0$ holds
for every $p \in \IN$.  This is equivalent to the condition that
$\det_{\caln(G)}(\Delta_p^{(2)}) > 0$ holds for every $p \in \IN$.  If $C_*$ is of
determinant class, then we define the \emph{$L^2$-torsion} of $C_*^{(2)}$ by
\begin{eqnarray}
  \rho^{(2)}(C_*^{(2)})  = \rho^{(2)}(C_*^{(2)};\caln(G))
  & := & - \sum_{p \in \IN} (-1)^p \cdot \ln({\det}_{\caln(G)}(c_p^{(2)})).
         \label{rho_upper_(2)(C_upper_(2)_ast)}
\end{eqnarray}
This turns out to be the same as putting
\begin{eqnarray}
  \rho^{(2)}(C_*^{(2)}) & = & - \frac{1}{2} \cdot \sum_{p \in \IN} (-1)^p \cdot p \cdot \ln({\det}_{\caln(G)}(\Delta_p^{(2)})).
                              \label{rho_upper_(2)(C_ast)_Laplace_Hilbert}
\end{eqnarray}


\subsection{$L^2$-Betti numbers and $L^2$-torsion of finite based free chain complexes
  over group rings}%
\label{subsec:L2-Betti_numbers_and_L2-torsion_of_chain_complexes_over_group_rings}

Let $G$ be a group and let $R$ be one of the rings $\IZ$, $\IQ$, $\IR$, or $\IC$.  Let
\[
  \cdots 0 \to 0 \to C_n \xrightarrow{c_n} C_{n-1} \xrightarrow{c_{n-1}} \cdots
  \xrightarrow{c_2} C_{1} \xrightarrow{c_1} C_{0} \to 0 \to \cdots
\]
be a finite based free $RG$-chain complex $(C_*,c_*)$, i.e., each $C_p$ is a finitely
generated free $RG$-module equipped with a $RG$-basis, each differential is a 
$RG$-homomorphism and there is a natural number $n$ such that $C_p = 0$ for $p < 0$ and
for $p > n$.  The basis induces on $C_p^{(2)} := L^2(G) \otimes_{RG} C_{p}$ the structure
of a finitely generated Hilbert $\caln(G)$-module in the obvious way.  Note for the sequel
that this structure is unchanged if we permute the basis elements or multiply one of the
basis elements with $\pm g$ for some $g \in G$.  The resulting differentials
$c_p^{(2)} = \id\otimes c_p \colon C_{i}^{(2)}\to C_{i-1}^{(2)}$ are bounded
$G$-equivariant operators.  So we get a finite Hilbert $\caln(G)$-chain complex
$C_*^{(2)}$.  For $p\in \IN$, we define the finitely generated Hilbert $\caln(G)$-module
\[
  H_p^{(2)}(C_*) := H_p^{(2)}(C_*^{(2)}).
\]
Denote by
\begin{eqnarray}
  b_p^{(2)}(C_*) = b_p^{(2)}(C_*;\caln(G)) & := &b_p^{(2)}(C_*^{(2)})  \in \IR^{\ge 0}
                                                  \label{b_p_upper_(2)(C_ast)}
\end{eqnarray}
the \emph{$p$-th $L^2$-Betti number} of $C_*$.  We say that the complex $C_*$ is
\emph{$L^2$-acyclic} if all its $L^2$-Betti numbers vanish.

We call $C_*$ of \emph{determinant class} if $C_*^{(2)} $ is of determinant class.  If
$C_*$ is of determinant class, then we define the \emph{$L^2$-torsion} of $C_*$ by
\begin{eqnarray}
  \rho^{(2)}(C_*) = \rho^{(2)}(C_*;\caln(G))  & := & \rho^{(2)}(C_*^{(2)}).
                                                     \label{rho_upper_(2)(C_ast)}
\end{eqnarray}


\subsection{$L^2$-Betti numbers and $L^2$-torsion of regular coverings of finite
  $CW$-complexes}%
\label{subsec:L2-Betti_numbers_and_L2-torsion_of_regular_coverings_of_finite_CW-complexes}

Let $G$ be a (discrete) group and $X$ be a finite $CW$-complex.  Let
$G \to \overline{X} \to X$ be a $G$-principal bundle over $X$, or, equivalently, a normal
covering with $G$ a group of deck transformations. The cellular chain complex
$C_*(\overline{X})$ of $\overline{X}$ with $\IZ$-coefficients is a finite free
$\IZ G$-chain complex. If we choose an ordering on the set of cells of $X$, an orientation
for each cell in $X$, and a lift of each cell in $X$ to cell in $\overline{X}$, we obtain
a $\IZ G$-basis for $C_*(X)$ and we can consider the finite $\caln(G)$-chain complex
$C_*^{(2)}(\overline{X})$.  One easily checks that $C_*^{(2)}(\overline{X})$ is
independent of the choices above. Hence we can define the \emph{$p$th $L^2$-Betti number}
\begin{eqnarray}
  b_p^{(2)}(\overline{X}) = b_p^{(2)}(\overline{X};\caln(G))
  & := & b_p^{(2)}(C_*^{(2)}(\overline{X})).
         \label{b_p_upper_(2)(overline(X))}                          
\end{eqnarray}
If $C_*^{(2)}(\overline{X})$ is of determinant class, we can also consider the
\emph{$L^2$-torsion}
\begin{eqnarray}
  \rho^{(2)}(\overline{X}) = \rho^{(2)}(\overline{X};\caln(G)) & := & \rho^{(2)}(C_*^{(2)}(\overline{X})).
                                                                      \label{rho_upper_(2)(overline(X))}
\end{eqnarray}

Let $X$ be a finite (not necessarily connected) $CW$-complex. Let $C$ be any of its path
components.  Let $\widetilde{C} \to C$ be the universal covering of $C$ which is a
$\pi_1(C)$-principal bundle.  So $b_p^{(2)}(\widetilde{C})$ is defined. We put
\begin{eqnarray}
  b_p^{(2)}(\widetilde{X}) & := & \sum_{C \in \pi_0(X)} b_p^{(2)}(\widetilde{C}).
                                  \label{b_p_upper_(2)(widetilde(X))}                          
\end{eqnarray}
We call $X$ \emph{of determinant class} if $C_*^{(2)}(\widetilde{C})$ is of determinant
class for every $C \in \pi_0(C)$.  In this case we put
\begin{eqnarray}
  \rho^{(2)}(\widetilde{X}) & := & \sum_{C \in \pi_0(X)} \rho^{(2)}(\widetilde{C}).
                                   \label{rho_upper_(2)(widetilde(X))}
\end{eqnarray}
We say that $\widetilde{X}$ is \emph{$L^2$-acyclic} if $b_p^{(2)}(\widetilde{X})$ vanishes
for all $p \in \IN$.  We say that $\widetilde{X}$ is \emph{$\det$-$L^2$-acyclic} if
$\widetilde{X}$ is of determinant class and $b_p^{(2)}(\widetilde{X})$ vanishes for all
$p \in \IN$.


\subsection{Basic properties of $L^2$-Betti numbers and $L^2$-torsion of universal
  coverings of finite $CW$-complexes}%
\label{subsec:Basic_properties_of_L2-Betti_numbers_and_L2-torsion_of_universal_coverings_of_finite_CW-complexes}
\ \\[1mm]
Here is a list of basic properties of $L^2$-Betti numbers and $L^2$-torsion of universal
coverings of finite $CW$-complexes.  Note that the status of the Determinant
Conjecture~\ref{con:Determinant_Conjecture} will be reviewed in
Remark~\ref{rem:status_of_Determinant_Conjecture}. It is known to be true for a very large
class of groups including sofic groups and fundamental groups of $3$-manifolds.
\begin{enumerate}

\item\label{list:main_properties_of_rho2(widetildeX):homotopy_invariance} \emph{(Simple)
    Homotopy invariance}, see~\cite[Theorem~1.35~(1) on page~37 and Theorem~3.96~(i) on
  page~163]{Lueck(2002)}.
  \\[1mm]
  Let $f\colon X \to Y$ be a homotopy equivalence of finite $CW$-complexes.
  \begin{enumerate}
  \item Then
    \[b_p^{(2)}(\widetilde{X}) = b_p^{(2)}(\widetilde{Y});
    \]
  \item Suppose that $\widetilde{X}$ or $\widetilde{Y}$ is $\det$-$L^2$-acyclic.  Then
    both $\widetilde{X}$ and $\widetilde{Y}$ are $\det$-$L^2$-acyclic;

  \item Suppose that $f$ is a simple homotopy equivalence or that $f$ is a homotopy
    equivalence and $\pi_1(X)$ satisfies the Determinant
    Conjecture~\ref{con:Determinant_Conjecture}. Assume that $\widetilde{X}$ and
    $\widetilde{Y}$ are $\det$-$L^2$-acyclic. Then
    \[
      \rho^{(2)}(\widetilde{Y}) = \rho^{(2)}(\widetilde{X});
    \]
  \end{enumerate}

\item\label{list:main_properties_of_rho2(widetildeX):Euler-Poincare_formula}
  \emph{Euler-Poincar\'e formula}, see~\cite[Theorem~1.35~(2) on page~37]{Lueck(2002)}.
  \\[1mm]
  We get for a finite $CW$-complex $X$
  \[
    \chi(X) = \sum_{p \in \IN} (-1)^p \cdot b_p^{(2)}(\widetilde{X});
  \]

\item\label{list:main_properties_of_rho2(widetildeX):sum_formula}
  \emph{Sum formula}, see~\cite[Theorem~3.96~(2)  on page~164]{Lueck(2002)}.\\[1mm]
  Consider the pushout of finite $CW$-complexes such that $j_1$ is an inclusion of
  $CW$-complexes, $j_2$ is cellular and $X$ inherits its $CW$-complex structure from
  $X_0$, $X_1$ and $X_2$
  \[
    \xymatrix{X_0 \ar[r]^-{j_1} \ar[d]_{j_2} & X_1 \ar[d]^{i_1} \\ X_2 \ar[r]_-{i_2} & X.}
  \]
  Assume that for $k=0,1,2$ the map $\pi_1(i_k,x)\colon \pi_1(X_k,x) \to \pi_1(X,j_k(x) )$
  induced by the obvious map $i_k\colon X_k \to X$ is injective for all base points $x$ in
  $X_k$.
  \begin{enumerate}
  \item If $\widetilde{X_0}$, $\widetilde{X_1}$, and $\widetilde{X_2}$ are $L^2$-acyclic,
    then $\widetilde{X}$ is $L^2$-acyclic;

  \item If $\widetilde{X_0}$, $\widetilde{X_1}$, and $\widetilde{X_2}$ are
    $\det$-$L^2$-acyclic, then $\widetilde{X}$ is $\det$-$L^2$-acyclic and we get
    \[
      \rho^{(2)}(\widetilde{X}) = \rho^{(2)}(\widetilde{X_1}) +
      \rho^{(2)}(\widetilde{X_2}) - \rho^{(2)}(\widetilde{X_0});
    \]
  \end{enumerate}

\item\label{list:main_properties_of_rho2(widetildeX):Poincare_duality}
  \emph{Poincar\'e duality}, see~\cite[Theorem~1.35~(3) on page~37 and Theorem~3.96~(3)  on page~164]{Lueck(2002)}.\\[1mm]
  Let $M$ be a closed manifold of dimension $n$
  \begin{enumerate}
  \item Then
    \[
      b_p^{(2)}(\widetilde{M}) = b_{n-p}^{(2)}(\widetilde{M});
    \]
  \item Suppose that $n$ is even and $\widetilde{M}$ is $\det$-$L^2$-acyclic. Then
    \[
      \rho^{(2)}(\widetilde{M}) = 0;
    \]
  \end{enumerate}

\item\label{list:main_properties_of_rho2(widetildeX):product_formula}
  \emph{Product formula}, see~\cite[Theorem~1.35~(4) on page~37 and Theorem~3.96~(4)  on page~164]{Lueck(2002)}.\\[1mm]
  Let $X$ and $Y$ be finite $CW$-complexes.

  \begin{enumerate}

  \item Then
    \[
      b_p^{(2)}(\widetilde{X \times Y}) = \sum_{i,j \in \IN, p = i+j}
      b_i^{(2)}(\widetilde{X}) \cdot b_j^{(2)}(\widetilde{Y});
    \]
  \item Suppose that $\widetilde{X}$ is $\det$-$L^2$-acyclic. Then
    $\widetilde{X \times Y}$ is $\det$-$L^2$-acyclic and
    \[
      \rho^{(2)}(\widetilde{X \times Y}) = \chi(Y) \cdot \rho^{(2)}(\widetilde{X});
    \]
  \end{enumerate}
\item\label{list:main_properties_of_rho2(widetildeX):multiplicativity}
  \emph{Multiplicativity}, see~\cite[Theorem~1.35~(9) on page~38 and Theorem~3.96~(3)  on page~164]{Lueck(2002)}.\\[1mm]
  Let $X \to Y$ be a finite covering of finite $CW$-complexes with $d$ sheets.

  \begin{enumerate}
  \item Then
    \[
      b_p(\widetilde{X}) = d \cdot b_p(\widetilde{Y});
    \]
  \item Then $\widetilde{X}$ is $\det$-$L^2$-acyclic if and only if $\widetilde{Y}$ is
    $\det$-$L^2$-acyclic, and in this case
    \[
      \rho^{(2)}(\widetilde{X}) = d \cdot \rho^{(2)}(\widetilde{Y});
    \]
  \end{enumerate}

\item\label{list:main_properties_of_rho2(widetildeX):sofic_det-class}
  \emph{Determinant class}.\\[1mm]
  If $\pi_1(C)$ satisfies the Determinant Conjecture~\ref{con:Determinant_Conjecture} for
  each component $C$ of the finite $CW$-complex $X$, then $\widetilde{X}$ is of
  determinant class;

\item\label{list:main_properties_of_rho2(widetildeX):det-class:b_0_upper(2)}
  \emph{$0$th $L^2$-Betti number}, see~\cite[Theorem~1.35~(8) on page~38]{Lueck(2002)}.\\[1mm]
  If $X$ is a connected finite $CW$-complex with fundamental group $\pi$, then
  \[
    b_0^{(2)}(\widetilde{X}) =
    \begin{cases}
      \frac{1}{|\pi|} & \text{if}\; \pi \; \text{is finite};
      \\
      0 & \text{otherwise;}
    \end{cases}
  \]

\item\label{list:main_properties_of_rho2(widetildeX):det-class:fibrations} \emph{Fibration
    formula}, see~\cite[Lemma~1.41 on page~45 and Corollary~3.103 on
  page~166]{Lueck(2002)}.  \smallskip
  \begin{enumerate}
  \item Let $p \colon E \to B$ a fibration such that $B$ is a connected finite
    $CW$-complex and the fiber is homotopy equivalent to a finite $CW$-complex $Z$. Suppose
    that for every $b \in B$ and $x \in F_b := p^{-1}(b)$ the inclusion $p^{-1}(b) \to E$
    induces an injection on the fundamental groups $\pi_1(F_b,x) \to \pi_1(E,x)$, and that
    $Z$ is $L^2$-acyclic.

    Then $E$ is homotopy equivalent to a finite $CW$-complex $X$ which is $L^2$-acyclic;

  \item Let $F \xrightarrow{i} E \xrightarrow{p} B$ be locally trivial fiber bundle of
    finite $CW$-complexes.  Suppose $B$ is connected, that the map
    $\pi_1(F,x) \to\pi_1(E,i(x))$ is bijective for every base point $x \in F$, and that
    $\widetilde{F}$ is $\det$-$L^2$-acyclic.
  
    Then $\widetilde{E}$ is $\det$-$L^2$-acyclic and
    \[
      \rho^{(2)}(\widetilde{E}) = \chi(B) \cdot \rho^{(2)}(\widetilde{F});
    \]
  \end{enumerate}

\item\label{list:main_properties_of_rho2(widetildeX):S_upper_1-actions}
  \emph{$S^1$-actions}, see~\cite[Theorem~1.40 on page~43 and Theorem~3.105  on page~168]{Lueck(2002)}.\\[1mm]
  Let $X$ be a connected compact $S^1$-$CW$-complex, for instance a closed smooth manifold
  with smooth $S^1$-action. Suppose that for one orbit $S^1/H$ (and hence for all orbits)
  the inclusion into $X$ induces a map on $\pi_1$ with infinite image.  (In particular the
  $S^1$-action has no fixed points.)  Then $\widetilde{X}$ is $\det$-$L^2$-acyclic and
  $\rho^{(2)}(\widetilde{M})$ vanishes;

\item\label{list:main_properties_of_rho2(widetildeX):aspherical}
  \emph{Aspherical spaces}, see~\cite[Theorem~3.111  on page~171 and Theorem~3.113  on page~172]{Lueck(2002)}.\\[1mm]
  \smallskip
  \begin{enumerate}

  \item Let $M$ be an aspherical closed smooth manifold with a smooth $S^1$-action.  Then
    the conditions appearing in
    assertion~\eqref{list:main_properties_of_rho2(widetildeX):S_upper_1-actions} are
    satisfied and hence $\widetilde{M}$ is $\det$-$L^2$-acyclic and
    $\rho^{(2)}(\widetilde{X})$ vanishes;

  \item If $X$ is an aspherical finite $CW$-complex whose fundamental group contains an
    infinite, elementary amenable, and normal subgroup, then $\widetilde{X}$ is
    $\det$-$L^2$-acyclic and $\rho^{(2)}(\widetilde{X})$ vanishes;

  \end{enumerate}

\item\label{list:main_properties_of_rho2(widetildeX):mapping_tori}
  \emph{Mapping tori}, see~\cite[Theorem~1.39 on page~42]{Lueck(2002)}.\\[1mm]
  Let $f \colon X \to X$ be a self homotopy equivalence of a finite $CW$-complex.  Denote
  by $T_f$ its mapping torus.
  
  \begin{enumerate}
  \item Then $\widetilde{T_f}$ is $L^2$-acyclic;

  \item If $\widetilde{X}$ is $\det$-$L^2$-acyclic, then $\rho^{(2)}(\widetilde{T_f})$ vanishes;
  \end{enumerate}
  
\item\label{list:main_properties_of_rho2(widetildeX):hyperbolic}
  \emph{Hyperbolic manifolds}, see~\cite{Hess-Schick(1998)},~\cite[Theorem~1.39 on page~42]{Lueck(2002)}.\\[1mm]
  Let $M$ be a hyperbolic closed manifold $M$ of dimension $n$.
  \begin{enumerate}
  \item If $n$ is odd, $\widetilde{M}$ is $\det$-$L^2$-acyclic;

  \item Suppose $n = 2m$ is even. Then $b_p^{(2)}(\widetilde{M})$ vanishes for
    $p \not= m$, and we have $(-1)^m \cdot \chi(M) = b_m^{(2)}(\widetilde{M})> 0$;

    \item For every number $m$ there exists an explicit constant $C_m> 0$ with the following
    property: If $M$ is a hyperbolic closed manifold of dimension $(2m+1)$ with volume
    $\vol(M)$, then
    \[\rho^{(2)}(\widetilde{M}) = (-1)^m \cdot C_m \cdot \vol(M).
    \]
    We have $C_1 = \frac{1}{6\pi}$. The number $\pi^m \cdot C_m$ is always rational;
  \end{enumerate}

\item\label{list:main_properties_of_rho2(widetildeX):approximation} \emph{Approximation of
    $L^2$-Betti numbers by classical Betti numbers},
  see~\cite{Lueck(1994c)},\cite[Chapter~13]{Lueck(2002)}.
  \\[1mm]
  Let $X$ be a connected finite $CW$-complex with fundamental group $G = \pi_1(X)$.
  Suppose that $G$ comes with a descending chain of subgroups
  \[
    G = G_0 \supseteq G_1 \supseteq G_2 \supseteq \cdots
  \]
  
  such that $G_i$ is normal in $G$, the index $[G:G_i]$ is finite and we have
  $\bigcap_{i \ge 0} G_i = \{1\}$.  Let $b_p(G_i\backslash \widetilde{X})$ be the $p$-th
  Betti number of the finite $CW$-complex $G_i\backslash \widetilde{X}$.

  Then $G_i\backslash \overline{X} \to X$ is a finite $[G:G_i]$-sheeted covering and we
  have
  \[
    b_p^{(2)}(\widetilde{X} ) = \lim_{i \to \infty} \frac{b_p(G_i\backslash
      \widetilde{X})}{[G:G_i]}.
  \]
\end{enumerate}

There is also  version, where the subgroups are not necessarily normal, see Farber~\cite{Farber(1998)}.

$L^2$-Betti numbers were originally defined by Atiyah~\cite{Atiyah(1976)}.
The definition of $L^2$-torsion in the analytic setting goes back to
Lott~\cite {Lott(1992a)} and Mathai~\cite{Mathai(1992)}~,
and in the topological setting to L\"uck-Rothenberg~\cite{Lueck-Rothenberg(1991)}.

For more information about $L^2$-invariants we refer for instance
to~\cite{Kammeyer(2019),Loeh(2020ergodic),Lueck(2002)}.


\typeout{---- Section 3: Some open conjectures about $L^2$-invariants -----}

\section{Some open conjectures about $L^2$-invariants}%
\label{sec:Some_open_conjectures_about_L2-invariants}

We briefly review some prominent and open conjectures about $L^2$-invariants.


\subsection{The Atiyah Conjecture}%
\label{subsec:The_Atiyah_Conjecture}

\begin{conjecture}[Atiyah Conjecture]\label{con:Atiyah_Conjecture}
  We say that a torsionfree group $G$ satisfies the \emph{Atiyah Conjecture} if for any
  matrix $A \in M_{m,n}(\IQ G)$ the von Neumann dimension $\dim_{\caln(G)}(\ker(r_A))$ of
  the kernel of the $\caln(G)$-homomorphism $r_A\colon \caln(G)^m \to \caln(G)^n$ given by
  right multiplication with $A$ is an integer.
\end{conjecture}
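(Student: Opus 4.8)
The Atiyah Conjecture is open for arbitrary torsionfree groups, so the realistic plan is to establish it not in full generality but for the groups this survey needs, above all fundamental groups of $3$-manifolds, following the template of Linnell. The decisive step is a reduction from the analytic integrality statement to a ring-theoretic one. Let $\calu(G)$ be the algebra of unbounded operators affiliated to $\caln(G)$: it contains $\caln(G)$, and hence $\IC G$, as a subring, it is the Ore localization of $\caln(G)$, and the von Neumann dimension extends to a dimension function on finitely generated $\calu(G)$-modules that is additive on short exact sequences and, on modules arising from morphisms of Hilbert $\caln(G)$-modules, recovers $\dim_{\caln(G)}$. Suppose one can exhibit a ring $D$ with $\IQ G \subseteq D \subseteq \calu(G)$ that is a skew field. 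Then for $A \in M_{m,n}(\IQ G)$ the image of right multiplication $r_A \colon D^m \to D^n$ is a finitely generated, hence free, $D$-submodule of $D^n$ of some rank $s \le \min\{m,n\}$, so $\ker(r_A \colon D^m \to D^n)$ is free of rank $m - s$; applying the exact functor $\calu(G) \otimes_D -$ and then the extended dimension function gives $\dim_{\caln(G)}\bigl(\ker(r_A \colon \caln(G)^m \to \caln(G)^n)\bigr) = m - s \in \IZ$. Everything is thereby reduced to constructing $D$, equivalently to showing that the division closure of $\IQ G$ inside $\calu(G)$ is a skew field (Linnell's strong form of the conjecture).

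To build $D$ for fundamental groups of $3$-manifolds I would combine the formal closure properties of Linnell's strong form --- it is inherited under directed unions of subgroups, it passes from a normal subgroup with elementary amenable quotient to the overgroup, it survives the induction from torsionfree finite-index subgroups (Linnell-Schick), and it is known for free groups, for surface groups, and more generally for locally indicable and for RFRS groups --- with the structure theory of Section~\ref{sec:Brief_surfey_on_3-manifolds}. A prime $3$-manifold group decomposes as a graph of groups, along the $\IZ^2$ edge groups coming from the incompressible tori of the Jaco-Shalen-Johannson splitting, into Seifert pieces (each virtually a surface group times $\IZ$) and hyperbolic pieces (virtually special, hence virtually RFRS by Theorem~\ref{thm:virtrfrs}); one then assembles $D$ for $\pi_1(N)$ from the skew fields of the pieces, the closed graph manifold case being the one requiring separate care. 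No hypothesis bounding the orders of finite subgroups enters, since $\pi_1(N)$ is torsionfree.

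The main obstacle is exactly this construction of $D$: proving that a concrete subring of $\calu(G)$ is closed under inversion of nonzero elements is genuinely hard, which is why the conjecture is still open in general. Linnell's original proof for his class already uses delicate $K_0$- and $K_1$-computations and, in the elementary amenable case, an Ore localization; the extensions to locally indicable and RFRS groups rest on Hughes' uniqueness theorem for Hughes-free division rings and on agrarian and Novikov-ring techniques. One should not expect to bypass this via the Approximation Theorem (Property~\eqref{list:main_properties_of_rho2(widetildeX):approximation}): expressing $\dim_{\caln(G)}(\ker r_A)$ as the limit of the normalized Betti numbers $b_p(G_i\backslash \widetilde{X})/[G:G_i]$ of finite covers only shows it to be a limit of rationals, and without an a priori bound on the denominators --- which is essentially the Atiyah Conjecture once more --- integrality does not follow. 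So the route to pursue is the algebraic one through skew fields, not the analytic one through approximation.
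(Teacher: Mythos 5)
The statement you were handed is not a theorem but a definition: it specifies what it means for a torsionfree group to \emph{satisfy} the Atiyah Conjecture, wrapped in a conjecture environment because the implicit claim (that all torsionfree groups do so) is a famous open problem. The paper therefore has no proof of it, only a status report (Theorem~\ref{the:Status_of_the_Atiyah_Conjecture}) and Linnell's skew-field reformulation (Theorem~\ref{the:Main_properties_of_cald(G)}); your proposal correctly recognizes this and reconstructs precisely that programme: reduce integrality of $\dim_{\caln(G)}(\ker r_A)$ to the existence of a skew field $D$ with $\IQ G \subseteq D \subseteq \calu(G)$ via flatness and rank counting --- which is Theorem~\ref{the:Main_properties_of_cald(G)}~\eqref{the:Main_properties_of_cald(G):skew_field} and~\eqref{the:Main_properties_of_cald(G):dim} --- and then obtain such $D$ for $3$-manifold groups from the closure properties of Linnell's class $\calc$ combined with prime decomposition, the JSJ splitting, and virtual RFRS-ness (Theorem~\ref{thm:virtrfrs}), which is exactly Theorem~\ref{the:Status_of_the_Atiyah_Conjecture}~\eqref{the:Status_of_the_Atiyah_Conjecture:3-manifold_not_graph}. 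Your caveat that the Approximation Theorem alone cannot deliver integrality is likewise the standard and correct remark. The only small imprecision is that the paper works specifically with the division closure $\cald(G)$ of $\IQ G$ in $\calu(G)$ (so that the skew-field condition is an honest equivalence, not just a sufficient hypothesis), rather than an arbitrary intermediate skew field, but this does not affect the validity of the reduction you wrote. In short: there is no proof to match, and your account faithfully reproduces the strategy the survey itself records.
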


The Atiyah Conjecture can also be formulated for any field $F$ with
$\IQ \subseteq F \subseteq \IC$ and matrices $A \in M_{m,n}(FG)$ and for any group with a
bound on the order of its finite subgroups.  However, we only need and therefore consider
in this paper the case, where $F = \IQ$ and $G$ is torsionfree.

\begin{definition}[Admissible $3$-manifold]\label{def:admissible_3-manifold} 
  A $3$-manifold is called \emph{admissible} if it is connected, orientable, compact and
  irreducible, its boundary is empty or a disjoint union of tori, its fundamental group is
  infinite, and it is not homeomorphic to $S^1 \times D^2$.
\end{definition}

For some information about the proof and in particular of references in the literature we
refer to~\cite[Theorem~3.2]{Friedl-Lueck(2019Euler)} except for
assertion~\eqref{the:Status_of_the_Atiyah_Conjecture:locally_indicable} which is due to
Jaikin-Zapirain and Lopez-Alvarez~\cite[Proposition~6.5]{Jaikin-Zapirain+Lopez-Alvarez(2020)}.  A group is
called \emph{locally indicable} if every non-trivial finitely generated subgroup admits an epimorphism
onto $\IZ$. Examples are torsionfree one-relator groups.

\begin{theorem}[Status of the Atiyah
  Conjecture]\label{the:Status_of_the_Atiyah_Conjecture}\
  \begin{enumerate}

  \item\label{the:Status_of_the_Atiyah_Conjecture:subgroups} If the torsionfree group $G$
    satisfies the Atiyah Conjecture, see Conjecture~\ref{con:Atiyah_Conjecture},
    then also each of its subgroups satisfies the Atiyah Conjecture;

  \item\label{the:Status_of_the_Atiyah_Conjecture:Linnell} \ Let $\calc$ be the smallest
    class of groups which contains all free groups and is closed under directed unions and
    extensions with elementary amenable quotients. Suppose that $G$ is a torsionfree group
    which belongs to $\calc$.

    Then $G$ satisfies the Atiyah Conjecture;

  \item\label{the:Status_of_the_Atiyah_Conjecture:3-manifold_not_graph} Let $G$ be an
    infinite group that  is the fundamental group of an admissible $3$-manifold $M$ which
    is not a closed graph manifold.  Then $G$ is torsionfree and belongs to $\calc$. In
    particular $G$ satisfies the Atiyah Conjecture;

  \item\label{the:Status_of_the_Atiyah_Conjecture:approx} Let $\cald$ be the smallest
    class of groups such that
    \begin{itemize}

    \item The trivial group belongs to $\cald$;

    \item If $p\colon G \to A$ is an epimorphism of a torsionfree group $G$ onto an
      elementary amenable group $A$ and if $p^{-1}(B) \in \cald$ for every finite group
      $B \subset A$, then $G \in \cald$;

    \item $\cald$ is closed under taking subgroups;

    \item $\cald$ is closed under colimits and inverse limits over directed systems.

    \end{itemize}

    If the group $G$ belongs to $\cald$, then $G$ is torsionfree and the Atiyah
    Conjecture holds for $G$.

    The class $\cald$ is closed under direct sums, direct products and free
    products. Every residually torsionfree elementary amenable group belongs to $\cald$;

  \item\label{the:Status_of_the_Atiyah_Conjecture:locally_indicable} A locally indicable
    group satisfies the Atiyah Conjecture. More generally,
    if $1 \to H \to G \to Q \to 1$ is an extension of groups, $H$ satisfies the Atyiah Conjecture
   and $Q$ is locally indicable, then $G$ satisfies the Atyiah Conjecture.
  \end{enumerate}
\end{theorem}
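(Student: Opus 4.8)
The plan is to treat the five assertions in turn: only~\eqref{the:Status_of_the_Atiyah_Conjecture:subgroups} is genuinely elementary, so I would prove it from scratch, while~\eqref{the:Status_of_the_Atiyah_Conjecture:3-manifold_not_graph} reduces to~\eqref{the:Status_of_the_Atiyah_Conjecture:Linnell} and the remaining assertions~\eqref{the:Status_of_the_Atiyah_Conjecture:Linnell},~\eqref{the:Status_of_the_Atiyah_Conjecture:approx},~\eqref{the:Status_of_the_Atiyah_Conjecture:locally_indicable} cite deep structure theorems whose skeletons I would sketch and whose hard cores I would invoke as black boxes. For~\eqref{the:Status_of_the_Atiyah_Conjecture:subgroups}, let $H\le G$ with $G$ torsionfree and satisfying Conjecture~\ref{con:Atiyah_Conjecture}, and let $A\in M_{m,n}(\IQ H)\subseteq M_{m,n}(\IQ G)$. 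Applying the induction functor $i_*$ for the inclusion $i\colon H\hookrightarrow G$ to the $\caln(H)$-map $r_A\colon L^2(H)^m\to L^2(H)^n$ yields precisely the $\caln(G)$-map $r_A\colon L^2(G)^m\to L^2(G)^n$, since $i_*L^2(H)=L^2(G)$ and the entries of $A$ lie in $\IQ H$. As $i_*$ is exact, carries weakly exact sequences to weakly exact sequences in the sense of Lemma~\ref{lem:weak_exactness}, and satisfies $\dim_{\caln(G)}(i_*V)=\dim_{\caln(H)}(V)$ for every finitely generated Hilbert $\caln(H)$-module $V$ (see~\cite{Lueck(2002)}), we obtain $\dim_{\caln(H)}(\ker(r_A))=\dim_{\caln(G)}(\ker(r_A))\in\IZ$; moreover $H$ is torsionfree, being a subgroup of a torsionfree group, so $H$ satisfies the Atiyah Conjecture.

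For~\eqref{the:Status_of_the_Atiyah_Conjecture:Linnell}, which is Linnell's theorem, the proof proceeds through the division closure of $\IQ G$ inside the algebra $\calu(G)$ of operators affiliated to $\caln(G)$: one shows by induction over the structure of the class $\calc$ that for torsionfree $G\in\calc$ this closure is a skew field, and that $\dim_{\caln(G)}(\ker(r_A))$ equals the corresponding dimension over this skew field, hence is an integer. The base case is that the division closure of $\IQ F$ is a skew field when $F$ is free; closure under directed unions uses continuity of $\dim_{\caln(G)}$; and the central step is closure under extensions $1\to N\to G\to A\to 1$ with $A$ elementary amenable, handled via Moody-type induction over the structure of elementary amenable groups and crossed-product arguments --- I would cite~\cite{Lueck(2002)} here. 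For~\eqref{the:Status_of_the_Atiyah_Conjecture:3-manifold_not_graph}, an admissible $M$ is irreducible with infinite fundamental group, hence aspherical by the Sphere Theorem, so $G=\pi_1(M)$ is torsionfree. Since $M$ is prime and not a closed graph manifold, Theorem~\ref{thm:virtrfrs} makes $G$ virtually $\operatorname{RFRS}$, and Agol's virtual fibering theorem~\cite{Agol(2008),Agol(2013)} (applied within the $\operatorname{RFRS}$ tower, using $\chi(M)=0$, cf.\ Theorem~\ref{thm:quasifib}) yields a finite cover $\overline{M}\to M$ fibering over $S^1$ with compact surface fiber $F$, so $\pi_1(\overline{M})$ is an extension $1\to\pi_1(F)\to\pi_1(\overline{M})\to\IZ\to 1$. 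The fundamental group of any compact surface lies in $\calc$ (it is free if the surface has boundary, it is finite for $S^2$ and $D^2$, and otherwise it surjects onto $\IZ$ with free kernel), so $\pi_1(\overline{M})\in\calc$; after passing to a further finite cover we may assume $\pi_1(\overline{M})$ is normal of finite index in $G$, and then $1\to\pi_1(\overline{M})\to G\to G/\pi_1(\overline{M})\to 1$ has finite, hence elementary amenable, quotient, so $G\in\calc$. Now~\eqref{the:Status_of_the_Atiyah_Conjecture:Linnell} applies.

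For~\eqref{the:Status_of_the_Atiyah_Conjecture:approx} the class $\cald$ is tailored to the approximation method: for $G\in\cald$ one shows that the relevant $L^2$-Betti numbers are limits of normalised Betti numbers along a suitable chain of finite-index subgroups and that these limits are integers, the inductive step along the second defining property combining L\"uck-type approximation with the integrality already available for the preimages $p^{-1}(B)$; closure under subgroups is as in~\eqref{the:Status_of_the_Atiyah_Conjecture:subgroups}, and closure under directed colimits and inverse limits uses continuity of $\dim_{\caln(G)}$. The remaining closure properties then follow formally: a torsionfree elementary amenable group lies in $\cald$ by the axioms, an arbitrary direct product of such groups lies in $\cald$ by the product-closure, and a residually torsionfree elementary amenable group embeds into such a product, hence lies in $\cald$ by subgroup-closure; I would cite~\cite{Lueck(2002),Friedl-Lueck(2019Euler)}. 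Assertion~\eqref{the:Status_of_the_Atiyah_Conjecture:locally_indicable} is the theorem of Jaikin-Zapirain and L\'opez-\'Alvarez~\cite{Jaikin-Zapirain+Lopez-Alvarez(2020)}: a locally indicable group $G$ is torsionfree, $\IQ G$ admits a Hughes-free epic division ring of fractions, and by Hughes' uniqueness theorem this division ring agrees with the division closure of $\IQ G$ inside $\calu(G)$, so $\dim_{\caln(G)}(\ker(r_A))$ equals the integer-valued dimension of the corresponding kernel over that division ring. For the relative statement one builds the division ring for $G$ over the one for $H$, extending the Hughes-free structure along a tower reflecting local indicability of $Q=G/H$ and again identifying it with the division closure in $\calu(G)$, with integrality at the bottom supplied by the hypothesis on $H$; torsionfreeness of $G$ follows from that of $H$ and of $Q$. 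I would present this only schematically and refer to~\cite{Jaikin-Zapirain+Lopez-Alvarez(2020)}.

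The genuine difficulty is concentrated in the cited theorems --- Linnell's theorem for~\eqref{the:Status_of_the_Atiyah_Conjecture:Linnell}, and in particular its elementary-amenable-extension step; the approximation machinery for~\eqref{the:Status_of_the_Atiyah_Conjecture:approx}; and the Hughes-free division ring theory for~\eqref{the:Status_of_the_Atiyah_Conjecture:locally_indicable}. In a survey these are invoked as black boxes, so the only assertion established from scratch is~\eqref{the:Status_of_the_Atiyah_Conjecture:subgroups}, with~\eqref{the:Status_of_the_Atiyah_Conjecture:3-manifold_not_graph} a clean reduction to~\eqref{the:Status_of_the_Atiyah_Conjecture:Linnell} via geometrization, virtual $\operatorname{RFRS}$-ness, and virtual fibering.
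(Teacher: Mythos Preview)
Your proposal is correct and in fact gives considerably more than the paper does: the paper provides no proof at all for this theorem, but simply refers the reader to~\cite[Theorem~3.2]{Friedl-Lueck(2019Euler)} for assertions~\eqref{the:Status_of_the_Atiyah_Conjecture:subgroups}--\eqref{the:Status_of_the_Atiyah_Conjecture:approx} and to~\cite[Proposition~6.5]{Jaikin-Zapirain+Lopez-Alvarez(2020)} for assertion~\eqref{the:Status_of_the_Atiyah_Conjecture:locally_indicable}. Your sketches are faithful to what those references actually do: the induction argument for~\eqref{the:Status_of_the_Atiyah_Conjecture:subgroups} is standard, the reduction of~\eqref{the:Status_of_the_Atiyah_Conjecture:3-manifold_not_graph} to~\eqref{the:Status_of_the_Atiyah_Conjecture:Linnell} via virtual fibering is exactly the argument in~\cite{Friedl-Lueck(2019Euler)}, and your outlines of Linnell's method, the approximation method, and the Hughes-free division ring method are accurate summaries of the respective black boxes.
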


\begin{remark}\label{rem:Atyiah_conjecture_and_closed_manifolds}
  Let $G$ be a finitely presented torsionfree group.  Then $G$ satisfies the Atiyah
  Conjecture if and only if the $p$th $L^2$-Betti number
  $b_p^{(2)}(\widetilde{M})$ is an integer for every $p \ge 0$ and every closed manifold
  $M$ with $\pi_1(M) \cong G$.
\end{remark}

\begin{remark}[Analytic version of $L^2$-Betti numbers]\label{rem:Analytic_version_of_L2-Betti_numbers}
  One can define the $L^2$-Betti number $b_p^{(2)}(\widetilde{M})$ of a closed Riemannian
  manifold $M$ by the analytic expression
  \begin{eqnarray}
    b_p^{(2)}(\widetilde{M})
    & = &
          \lim_{t \to \infty} \int_{\calf} \tr_{\IR}(e^{-t\Delta_p}(x,x)) \operatorname{dvol}.
          \label{analytic_L2-Betti_number_and_heat_kernel}
  \end{eqnarray}
  Here $e^{-t\Delta_p}(x,y)$ denotes the heat kernel for $p$-forms on the universal
  covering $\widetilde{M}$ and $\tr_{\IR}(e^{-t\Delta_p}(x,x))$ is its trace, and $\calf$
  is a \emph{fundamental domain} for the $\pi_1(M)$-action on $\widetilde{M}$, see for
  instance~\cite[Proposition~4.16 on page~63]{Atiyah(1976)}
  or~\cite[Section~1.3.2]{Lueck(2002)}.  In view of
  expression~\eqref{analytic_L2-Betti_number_and_heat_kernel} it is rather surprizing that
  this should always be an integer if the fundamental group is torsionfree.
\end{remark}

Note that any non-negative real number occurs as the von Neumann dimension of the kernel
of some morphisms of finitely generated Hilbert $\caln(G)$-modules if $G$ contains an
element of infinite order.  So it is crucial that the matrices appearing in the Atiyah
Conjecture~\ref{the:Status_of_the_Atiyah_Conjecture} live already over the group ring.

Associated to the von Neumann algebra $\caln(G)$ is the algebra of affiliated operators
$\calu(G)$ which contains $\caln(G)$.  It can be defined analytically or just as the Ore
localization of $\caln(G)$ with respect to the multiplicative subset of non-zero divisors.
Now one can consider the so called division closure $\cald(G)$ of $\IQ G$ in $\calu(G)$.

The proof of the following is based on ideas of Peter Linnell from~\cite{Linnell(1993)}
which have been explained in detail and a little bit extended in~\cite[Lemma~10.39 on
page~10.39 and Chapter~10]{Lueck(2002)} and~\cite{Reich(2006)}, see
also~\cite[Theorem~3.8]{Friedl-Lueck(2019Thurston)}.

\begin{theorem}[Main properties of $\cald(G)$]\label{the:Main_properties_of_cald(G)}
  Let $G$ be a torsionfree group.

  \begin{enumerate}

  \item\label{the:Main_properties_of_cald(G):skew_field} The group $G$ satisfies the
    Atiyah Conjecture if and only if $\cald(G)$ is a skew
    field;

  \item\label{the:Main_properties_of_cald(G):dim} Suppose that $G$ satisfies the Atiyah
    Conjecture.  Let $C_*$ be a $\IQ G$-chain complex whose
    chain-modules are finitely generated projective.  Then we get for all $n \ge 0$
    \[
      b_n^{(2)}\bigl(\caln(G) \otimes_{\IQ G} C_*\bigr) =
      \dim_{\cald(G)}\bigl(H_n(\cald(G) \otimes_{\IQ G} C_*)\bigr).
    \]
    In particular $b_n^{(2)}\bigl(\caln(G) \otimes_{\IQ G} C_*\bigr)$ is an integer.

  \end{enumerate}
\end{theorem}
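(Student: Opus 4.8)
\textbf{Proof proposal for Theorem~\ref{the:Main_properties_of_cald(G)}.}

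The plan is to treat the two assertions in sequence, with the first one feeding directly into the second. For assertion~\eqref{the:Main_properties_of_cald(G):skew_field}, the key observation is that $\cald(G)$ is by construction division closed in $\calu(G)$, so it is a skew field precisely when it has no nontrivial zero divisors, i.e.\ when every nonzero element is invertible inside $\calu(G)$ (division closedness then keeps the inverse in $\cald(G)$). First I would recall Linnell's description of $\cald(G)$ as the smallest division-closed subring of $\calu(G)$ containing $\IQ G$, and then argue both implications. If $\cald(G)$ is a skew field, then for a matrix $A \in M_{m,n}(\IQ G)$ one can compute $\dim_{\caln(G)}(\ker(r_A))$ via the rank of $A$ over $\cald(G)$ (using the fact, from~\cite[Chapter~10]{Lueck(2002)}, that $\dim_{\caln(G)}$ extends to $\calu(G)$ and agrees with $\dim_{\cald(G)}$ on finitely generated modules), and a rank over a skew field is an integer, so the Atiyah Conjecture holds. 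Conversely, if the Atiyah Conjecture holds, then for any $0 \neq x \in \cald(G)$ one writes $x$ as a ``$\IQ G$-rational expression,'' realizes multiplication by $x$ as a map $\calu(G) \to \calu(G)$ built from matrices over $\IQ G$, and uses the integrality of the relevant von Neumann dimensions to force $\ker = 0$, hence $x$ invertible in $\calu(G)$ and, by division closedness, in $\cald(G)$.

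For assertion~\eqref{the:Main_properties_of_cald(G):dim}, assuming now that $\cald(G)$ is a skew field, the strategy is a dimension-shifting / flatness argument. The inclusion $\IQ G \hookrightarrow \caln(G)$ factors through $\cald(G)$, and $\calu(G)$ is flat over $\cald(G)$ (indeed $\calu(G)$ is the Ore localization of $\caln(G)$, and one has the key compatibility $\dim_{\caln(G)} M = \dim_{\calu(G)}(\calu(G) \otimes_{\caln(G)} M)$ for finitely generated $\caln(G)$-modules, together with $\dim_{\calu(G)}(\calu(G) \otimes_{\cald(G)} N) = \dim_{\cald(G)} N$ since $\cald(G) \subseteq \calu(G)$ is a flat ring extension). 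Then I would compute, for $C_*$ a $\IQ G$-chain complex of finitely generated projective modules,
\[
  b_n^{(2)}(\caln(G) \otimes_{\IQ G} C_*) = \dim_{\calu(G)}\bigl(H_n(\calu(G) \otimes_{\IQ G} C_*)\bigr) = \dim_{\calu(G)}\bigl(\calu(G) \otimes_{\cald(G)} H_n(\cald(G) \otimes_{\IQ G} C_*)\bigr),
\]
where the first equality uses that $\dim_{\caln(G)}$ is determined by the associated $\calu(G)$-module and commutes with taking homology of finite chain complexes (this is where the determinant-class issues are irrelevant, since we only need the $L^2$-Betti numbers, not $L^2$-torsion), and the second equality uses that $\cald(G)$ is a skew field, hence all modules are flat, so $\calu(G) \otimes_{\cald(G)} (-)$ is exact and commutes with $H_n$. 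The right-hand side equals $\dim_{\cald(G)}\bigl(H_n(\cald(G) \otimes_{\IQ G} C_*)\bigr)$, which is a nonnegative integer because a module over a skew field is free of a well-defined (possibly infinite, but here finite since $C_*$ is a finite complex of finitely generated modules) rank. This gives both the displayed formula and the integrality.

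The main obstacle I anticipate is not the formal homological algebra but pinning down precisely the dimension-compatibility statements across the tower $\IQ G \subseteq \cald(G) \subseteq \caln(G) \subseteq \calu(G)$ — in particular that $\dim_{\caln(G)}$ on a finitely generated $\caln(G)$-module coincides with a well-defined $\calu(G)$-dimension and, crucially, that this is insensitive to the passage to homology for finite projective complexes. The subtlety is that $\caln(G) \otimes_{\IQ G} C_*$ has homology modules that need not be finitely generated as abstract modules in a naive sense, so one must work with the extended (Lück) dimension function that is defined on \emph{all} modules and is additive and cofinal; granting the machinery of~\cite[Chapter~6 and Chapter~8]{Lueck(2002)} this goes through, and one should also note that $\calu(G)$ being the Ore localization of $\caln(G)$ at non-zero-divisors makes it flat over $\caln(G)$, which is what legitimizes commuting the tensor product past $H_n$ on the $\caln(G)$-side. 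Once these inputs are in place, the proof is essentially a two-line computation as above.
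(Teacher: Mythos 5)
The paper does not prove Theorem~\ref{the:Main_properties_of_cald(G)} in-line; it delegates to Linnell~\cite{Linnell(1993)}, to L\"uck~\cite[Lemma~10.39 and Chapter~10]{Lueck(2002)}, and to Reich~\cite{Reich(2006)}. Your outline is consistent with the approach in those sources: assertion~\eqref{the:Main_properties_of_cald(G):dim} is exactly the dimension-shifting argument along the flat extensions $\IQ G \subseteq \cald(G) \subseteq \calu(G)$ and $\caln(G) \subseteq \calu(G)$, and the ``skew field $\Rightarrow$ Atiyah'' half of~\eqref{the:Main_properties_of_cald(G):skew_field} is the one-matrix special case of that same computation, so your logical ordering (backward direction of~\eqref{the:Main_properties_of_cald(G):skew_field}, then~\eqref{the:Main_properties_of_cald(G):dim}, with the forward direction as a corollary) is the right one. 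You also correctly note that integrality follows because $\cald(G)\otimes_{\IQ G}C_*$ is a finite complex of finite-dimensional $\cald(G)$-vector spaces.

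Two places where the sketch is looser than what the references actually do. First, the opening ``i.e.''\ identifying ``$\cald(G)$ has no nontrivial zero divisors'' with ``every nonzero element of $\cald(G)$ is invertible in $\calu(G)$'' is not a tautology; the bridge is that $\calu(G)$ is von Neumann regular, where an element that is not a zero divisor is automatically a unit. This should be made explicit. Second, and more substantively, the converse ``Atiyah $\Rightarrow$ skew field'' is the genuinely hard direction and ``write $x$ as a $\IQ G$-rational expression, realize multiplication by $x$ via matrices over $\IQ G$'' compresses the real content. The standard route first shows that $\cald(G)$ is contained in the rational closure of $\IQ G$ in $\calu(G)$, so that every $x\in\cald(G)$ arises as an entry of $A^{-1}$ for some $A\in M_n(\IQ G)$ that is invertible in $M_n(\calu(G))$, and then invokes the integrality of $\dim_{\caln(G)}$ on kernels of $\IQ G$-matrices (the Atiyah hypothesis) plus a Schur-complement/Cramer's rule manipulation to force $\ker(r_x)=0$ in $\calu(G)$, whence $x$ is invertible and division closedness puts $x^{-1}$ back in $\cald(G)$. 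Without the rational-closure description, ``built from matrices over $\IQ G$'' has no concrete meaning; as a full proof this step needs to be spelled out, although at the level of detail the survey itself provides, your sketch captures the idea.
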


Theorem~\ref{the:Main_properties_of_cald(G)} shows that the Atyiah
Conjecture is related to the question whether for a
torsionfree group $G$ the group ring $\IQ G$ can be embedded into a skew field, see for
instance~\cite{Henneke-Kielak(2020)}.

There is a program of Linnell~\cite{Linnell(1993)} to prove the Atyiah Conjecture which is
discussed in details for instance in~\cite[Theorem~10.38 on page~387 and
Section~10.3]{Lueck(2002)} and~\cite{Reich(2006)}.  This shows that one has at least some
ideas why the Atyiah Conjecture is true and that the Atiyah
Conjecture is related to some deep ring theory and to
algebraic $K$-theory, notably to projective class groups.  This connection to ring theory
has been explained and exploited for instance in~\cite{Jaikin-Zapirain(2019), Jaikin-Zapirain+Lopez-Alvarez(2020)},
where the division closure is replaced by  the $\ast$-regular closure.

For more information about the Atyiah Conjecture we refer for
instance to~\cite[Chapter~10]{Lueck(2002)}.


\subsection{The Singer Conjecture}%
\label{subsec:The_Singer_Conjecture}

\begin{conjecture}[Singer Conjecture]\label{con:Singer_Conjecture}
  If $M$ is an aspherical closed manifold, then
  \[
    b_p^{(2)}(\widetilde{M}) = 0 \quad \text{if} \; 2p \not= \dim(M).
  \]
  If $M$ is a closed connected Riemannian manifold with negative sectional curvature of
  even dimension $\dim (M) = 2m$, then
  \[
    (-1)^m \cdot \chi(M) = b_m^{(2)}(\widetilde{M}) > 0.
  \]
\end{conjecture}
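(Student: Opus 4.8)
The Singer Conjecture is open in general, so what follows is the strategy I would pursue together with the cases in which it can actually be carried through.

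\emph{Reduction to $L^2$-Hodge theory.} The first thing I would do is fix a Riemannian metric on $M$ and pull it back to $\widetilde{M}$. By the $L^2$-Hodge--de~Rham theorem the reduced $L^2$-cohomology of $\widetilde{M}$ in degree $p$ is the space $\mathcal{H}^p_{(2)}(\widetilde{M})$ of square-integrable harmonic $p$-forms, and $b_p^{(2)}(\widetilde{M}) = \dim_{\caln(\pi_1(M))}\bigl(\mathcal{H}^p_{(2)}(\widetilde{M})\bigr)$; see Remark~\ref{rem:Analytic_version_of_L2-Betti_numbers} and~\cite[Section~1.3]{Lueck(2002)}. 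Both assertions of the conjecture thus become statements about harmonic $L^2$-forms: vanishing of $\mathcal{H}^p_{(2)}(\widetilde{M})$ whenever $2p \neq \dim(M)$, and, in the negatively curved even-dimensional case, non-triviality of $\mathcal{H}^m_{(2)}(\widetilde{M})$ in the middle degree.

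\emph{The aspherical case.} Here I would proceed by cases, since no uniform mechanism is known. If $\dim(M)\le 3$ the statement follows from the structure theory surveyed in Section~\ref{sec:Brief_surfey_on_3-manifolds}: decomposing along the Jaco--Shalen--Johannson tori and using the sum formula together with the vanishing of all $L^2$-Betti numbers for Seifert pieces and mapping tori (the $S^1$-action and mapping-torus properties in Subsection~\ref{subsec:Basic_properties_of_L2-Betti_numbers_and_L2-torsion_of_universal_coverings_of_finite_CW-complexes}) and for hyperbolic pieces, one gets $b_p^{(2)}(\widetilde{M})=0$ for all $p$, which is compatible with $2p\neq 3$. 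If $M$ is a closed locally symmetric space of non-compact type, I would identify $\mathcal{H}^\ast_{(2)}(\widetilde{M})$ with the relative Lie algebra cohomology $H^\ast(\mathfrak{g},K;L^2(\Gamma\backslash G))$ and invoke the representation-theoretic vanishing of $(\mathfrak{g},K)$-cohomology outside the middle range. If $M$ carries a Kähler metric whose Kähler form lifts to $d\eta$ with $\eta$ bounded, the Lefschetz operator $L=\widetilde{\omega}\wedge(-)$ is bounded and interacts well enough with the harmonic projection that Gromov's argument forces concentration in the middle degree. If $M$ fibers over $S^1$ with fiber satisfying the conjecture, the fibration formula applies. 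The main obstacle is that a proof in full generality would in particular settle the Hopf sign conjecture for the Euler characteristic of aspherical manifolds of non-positive curvature, so one should not expect an elementary argument.

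\emph{The negatively curved even-dimensional case, $\dim(M)=2m$.} The plan has two halves. For the vanishing $b_p^{(2)}(\widetilde{M})=0$ when $p\neq m$: since $M$ is closed, $\widetilde{M}$ is complete, simply connected and pinched, $-b^2\le \sec\le -a^2<0$, so by Gromov every bounded closed form on $\widetilde{M}$ of positive degree is $d$ of a bounded form (vanishing of $\ell^\infty$-de~Rham cohomology). Given an $L^2$-harmonic $p$-form $\alpha$ with $p<m$, I would write $\alpha=d\beta$ with $\beta$ bounded and integrate $\int_{B_R}\langle\alpha,d\beta\rangle=\int_{B_R}|\alpha|^2$ over an exhaustion by balls, trying to force $\|\alpha\|_{L^2}^2=0$ by pushing the boundary term $\int_{\partial B_R}\beta\wedge{\ast}\alpha$ to zero (the boundary term being controlled since $\|\beta\|_\infty<\infty$ and ${\ast}\alpha\in L^2$); the range $p>m$ then follows by Poincar\'e duality for harmonic forms. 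The honest point is that the decay of the boundary term one needs is available for constant curvature (Dodziuk) and, via the Lefschetz trick, in the Kähler hyperbolic case (Gromov), but not for a general pinched negatively curved metric -- this is the crux of what is open. Granting the vanishing, the Euler--Poincar\'e formula gives $(-1)^m\chi(M)=b_m^{(2)}(\widetilde{M})\ge 0$, and strict positivity $b_m^{(2)}(\widetilde{M})>0$ is then equivalent to $\chi(M)\neq 0$, i.e.\ to the Hopf conjecture in dimension $2m$ (a theorem for $2m\le 4$, open otherwise). Alternatively I would try to produce a non-zero $L^2$-harmonic $m$-form directly, by transplanting a compactly supported closed $m$-form of non-zero periods to $\widetilde{M}$ and projecting to the harmonic part, the projection surviving because of a spectral gap of $\Delta_m$ away from $0$ -- again known for $\widetilde{M}=\IH^{2m}$ (which recovers the hyperbolic case already stated in Subsection~\ref{subsec:Basic_properties_of_L2-Betti_numbers_and_L2-torsion_of_universal_coverings_of_finite_CW-complexes}) but not in general. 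In both halves the difficulty is the same: the purely Riemannian negatively curved setting lacks the extra parallel structure -- a Lefschetz operator, a symmetric-space structure, or constant curvature -- that makes the $d$(bounded) argument effective, and supplying an adequate substitute is where all known partial results enter.
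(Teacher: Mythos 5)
This statement is a conjecture, not a theorem, and the paper offers no proof of it; Subsection~\ref{subsec:The_Singer_Conjecture} says explicitly that the evidence for the Singer Conjecture comes from computations only and that no strategy for a potential proof is known. You recognize this at the outset and instead give a survey of strategies and known special cases, which is the honest and correct thing to do. Your reduction to $L^2$-Hodge theory, your list of settled cases (dimension~$\le 3$ via the JSJ decomposition and the sum formula, locally symmetric spaces via $(\mathfrak{g},K)$-cohomology, K\"ahler hyperbolic manifolds via Gromov, mapping tori via the fibration formula), and your identification of the precise open obstacle (lack of decay of the boundary term in the exhaustion argument, absence of a Lefschetz-type operator or symmetric-space structure in a generic pinched negatively curved metric) are all accurate and match the paper's assessment, including the remark that in dimension~$3$ the conjecture follows from Theorem~\ref{the:L2-Betti_numbers_of_3-manifolds}.

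Two small imprecisions worth flagging. First, in Gromov's K\"ahler-hyperbolic proof the Lefschetz operator is not used to control boundary terms in an exhaustion: the argument is algebraic, writing $L(\alpha)=d(\eta\wedge\alpha)$ with $\eta\wedge\alpha\in L^2$ for a bounded primitive $\eta$ of the lifted K\"ahler form, concluding $L(\alpha)=0$ from the $L^2$-Kodaira decomposition, and then invoking hard Lefschetz injectivity for $p<m$; so it is a genuinely different mechanism from Dodziuk's constant-curvature computation, not a refinement of the boundary-term estimate. Second, a ``spectral gap of $\Delta_m$ away from~$0$'' would by itself imply $b_m^{(2)}(\widetilde{M})=0$, the opposite of what you want; what you presumably mean is a gap of $\Delta_m$ restricted to the orthogonal complement of its kernel, which makes the harmonic projection well-behaved and prevents it from annihilating the transplanted test form. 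Neither point changes your overall and correct conclusion that, aside from these special situations, the conjecture remains open and no unified proof is available.
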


Note that the equality $(-1)^m \cdot \chi(M) = b_m^{(2)}(\widetilde{M})$ appearing in the
Singer Conjecture~\ref{con:Singer_Conjecture} above follows from the the Euler-Poincar\'e
formula $\chi(M) = \sum_{p \ge 0} (-1)^p \cdot b_p^{(2)}(\widetilde{M})$.  Obviously
Singer Conjecture~\ref{con:Singer_Conjecture} implies the following conjecture in the
cases, where $M$ is aspherical or has negative sectional curvature.

\begin{conjecture}[Hopf Conjecture]\label{con:Hopf_Conjecture}
  If $M$ is an aspherical closed manifold of even dimension $\dim (M) = 2m$, then
  \[
    (-1)^{m} \cdot \chi(M) \ge 0.
  \]
  If $M$ is a closed Riemannian manifold of even dimension $\dim (M) = 2m$ with sectional
  curvature $\sec(M)$, then
  \[
    \begin{array}{rlllllll}
      (-1)^m \cdot \chi(M) & > & 0 & &
                                       \text{if}   & \sec(M) & < & 0;
      \\
      (-1)^m \cdot \chi(M) & \ge   & 0 & &
                                           \text{if}   & \sec(M) & \le & 0;
      \\
      \chi(M) & = & 0 & &
                          \text{if}  & \sec(M) & = & 0;
      \\
      \chi(M) & \ge & 0 & &
                            \text{if}   & \sec(M) & \ge & 0;
      \\
      \chi(M) & > & 0 & &
                          \text{if}  & \sec(M) & > & 0.
    \end{array}
  \]
\end{conjecture}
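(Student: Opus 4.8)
The plan is to treat the Hopf Conjecture as a consequence of the Singer Conjecture~\ref{con:Singer_Conjecture} in the curvature regimes where the two overlap, and to handle the remaining cases classically; I do not expect the $L^2$-machinery to say anything about the positively curved statements.

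First I would dispose of the aspherical case, which simultaneously takes care of the assertions for $\sec(M) \le 0$ and $\sec(M) < 0$. By the Cartan--Hadamard theorem a closed manifold with $\sec(M) \le 0$ has universal cover diffeomorphic to $\IR^{2m}$ and is therefore aspherical. For an aspherical closed $M$ of dimension $2m$ the Singer Conjecture predicts $b_p^{(2)}(\widetilde M) = 0$ whenever $p \ne m$, so the Euler--Poincar\'e formula~\eqref{list:main_properties_of_rho2(widetildeX):Euler-Poincare_formula} collapses to
\[
\chi(M) \;=\; \sum_{p \in \IN} (-1)^p \cdot b_p^{(2)}(\widetilde M) \;=\; (-1)^m \cdot b_m^{(2)}(\widetilde M).
\]
Since a von Neumann dimension is a non-negative real number, this gives $(-1)^m \cdot \chi(M) = b_m^{(2)}(\widetilde M) \ge 0$, which is the claim for $\sec(M) \le 0$; and when $\sec(M) < 0$ the strict inequality $(-1)^m \cdot \chi(M) = b_m^{(2)}(\widetilde M) > 0$ is precisely the second half of the Singer Conjecture.

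Next I would settle the flat case $\sec(M) = 0$ unconditionally: by Bieberbach's theorem such an $M$ is finitely covered by a torus $T^{2m}$, and since the Euler characteristic is multiplicative under finite coverings and $\chi(T^{2m}) = 0$, one gets $\chi(M) = 0$. (Note that the Singer Conjecture by itself does not give this, as it does not predict the vanishing of $b_m^{(2)}$.) There remain the statements for $\sec(M) > 0$ and $\sec(M) \ge 0$, and here $M$ need not be aspherical --- for instance $M = S^{2m}$ --- so the $L^2$-route is simply unavailable. The natural attack is to use Bonnet--Myers to reduce to the simply connected case and then to prove that a closed positively curved manifold of even dimension has positive Euler characteristic, afterwards extending to $\sec(M) \ge 0$ by a limiting argument. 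This is the main obstacle: it is the classical Hopf conjecture on positively curved manifolds, which is open beyond low dimensions and beyond situations with extra symmetry, and no $L^2$-invariant is presently known that detects it.
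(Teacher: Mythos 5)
Your proposal correctly matches what the paper actually asserts about this statement, which, being a conjecture, has no proof in the paper at all: the text only remarks, immediately before the statement, that ``Obviously Singer Conjecture~\ref{con:Singer_Conjecture} implies the following conjecture in the cases, where $M$ is aspherical or has negative sectional curvature,'' with the mechanism being precisely the Euler--Poincar\'e formula you invoke. You spell out that reduction correctly (Cartan--Hadamard for $\sec\le 0$, vanishing of $b_p^{(2)}$ away from the middle dimension, non-negativity of von Neumann dimension, strict positivity from the second half of the Singer Conjecture for $\sec<0$), you rightly add that the flat case follows unconditionally from Bieberbach and multiplicativity of $\chi$ rather than from Singer, and you rightly identify the non-negative and positive curvature cases as classical open problems outside the reach of the $L^2$-machinery. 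In short, you have not ``proved'' the conjecture --- no one can --- but you have reproduced, with more detail, exactly the implication the paper records, and you have correctly delimited which parts of the Hopf Conjecture the Singer Conjecture reaches and which it does not.
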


In original versions of the Singer Conjecture~\ref{con:Singer_Conjecture} and the Hopf
Conjecture~\ref{con:Hopf_Conjecture} the condition aspherical closed manifolds was
replaced by the condition closed Riemannian manifold with non-positive sectional
curvature. Note that a closed Riemannian manifold with non-positive sectional curvature is
aspherical by Hadamard's Theorem.

Note that the Singer Conjecture~\ref{con:Hopf_Conjecture} is consistent with the Atiyah
Conjecture in the sense that it predicts that the $L^2$-Betti numbers
$b_p^{(2)}(\widetilde{M})$ for an aspherical closed manifold $M$ are all integers.

The \emph{action dimension} of a discrete group $G$ is the smallest dimension of a
contractible manifold that admits a proper action of $G$.  This notion and its relation to
the Singer Conjecture is explained
in~\cite{Avramidi-Davis-Okun-Schreve(2016),Okun-Schreve(2016)}.

In contrast to the Atyiah Conjecture the evidence for the
Singer Conjecture~\ref{con:Singer_Conjecture} comes from computations only and no good
strategy is known for a potential proof.  In some sense Poincar\'e duality and the
$L^2$-conditions seems to force the $L^2$-Betti numbers $b_p^{(2)}(\widetilde{M})$ of an
aspherical closed manifold to concentrate in the middle dimension.  One may wonder what
happens if we replace $M$ by an aspherical finite Poincar\'e complex in the Singer
Conjecture~\ref{con:Singer_Conjecture}.  There are counterexamples to the Singer
Conjecture~\ref{con:Singer_Conjecture} if one weakens aspherical to rationally aspherical,
see~\cite[Theorem~4]{Avramidi(2018)}.

For more information about the Singer Conjecture and its status we refer for instance
to~\cite[Conjecture 2]{Dodziuk(1979)},~\cite[Chapter~11]{Lueck(2002)},
and~\cite{Singer(1977)}.


\subsection{The Determinant Conjecture}%
\label{subsec:The-Determinant_Conjecture}

\begin{conjecture}[Determinant Conjecture for a group $G$]\label{con:Determinant_Conjecture}
  For any matrix $A \in M_{r,s}(\IZ G)$, the Fuglede-Kadison determinant of the morphism
  of Hilbert modules $r_A^{(2)}\colon L^2(G)^r \to L^2(G)^s$ given by right multiplication
  with $A$ satisfies
  \[{\det}_{\caln(G)}^{(2)}\bigl(r_A^{(2)}\bigr)\ge 1.
  \]
\end{conjecture}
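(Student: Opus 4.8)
The Determinant Conjecture is open for a general group $G$, so the plan is to prove it for all \emph{residually finite} groups; by Subsection~\ref{subsec:On_the_fundamental_groups_of_3-manifolds} this class contains every fundamental group of a $3$-manifold, which is what matters for this survey, and I will indicate at the end how the argument extends further. The idea is approximation. Fix a chain $G = N_0 \supseteq N_1 \supseteq N_2 \supseteq \cdots$ of normal subgroups of finite index with $\bigcap_i N_i = \{1\}$, and, given $A \in M_{r,s}(\IZ G)$, let $\overline{A}_i \in M_{r,s}(\IZ[G/N_i])$ be its reduction. The proof has three parts: (i) at each finite stage the Fuglede--Kadison determinant of $r_{\overline{A}_i}^{(2)}$ is an ordinary determinant of an integer matrix, hence $\ge 1$; (ii) the spectral data at the finite stages converge to that of $r_A^{(2)}$; (iii) this convergence, handled with care near $0$, forces ${\det}_{\caln(G)}\bigl(r_A^{(2)}\bigr) \ge 1$.

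For (i), since $G/N_i$ is finite, Example~\ref{exa:det_for_finite_groups} gives
\[
  {\det}_{\caln(G/N_i)}\bigl(r_{\overline{A}_i}^{(2)}\bigr) \;=\; {\det}_{\IC}\bigl(\overline{R_i^{\ast}R_i}\bigr)^{\frac{1}{2|G/N_i|}},
\]
where $R_i$ is the complex matrix representing $r_{\overline{A}_i}^{(2)}$ in the bases of $\IC[G/N_i]^{r}$ and $\IC[G/N_i]^{s}$ given by group elements, and $\overline{R_i^{\ast}R_i}$ is the restriction of $R_i^{\ast}R_i$ to the orthogonal complement of its kernel. Because right multiplication by a group element is a permutation matrix, $R_i$ has integer entries, so $R_i^{\ast}R_i$ is a positive semidefinite integer matrix; the product of its non-zero eigenvalues equals, up to sign, the lowest non-vanishing coefficient of its characteristic polynomial, hence is a positive integer, so ${\det}_{\IC}\bigl(\overline{R_i^{\ast}R_i}\bigr) \ge 1$ and therefore ${\det}_{\caln(G/N_i)}\bigl(r_{\overline{A}_i}^{(2)}\bigr) \ge 1$.

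For (ii), by Theorem~\ref{the:main_properties_of_det}~\eqref{the:main_properties_of_det:det(f)_is_det(fast)} the quantity $\ln {\det}_{\caln(G)}\bigl(r_A^{(2)}\bigr)$ equals $\tfrac12 \int_{(0,\infty)} \ln\lambda \, d\mu$, where $\mu$ is the spectral measure of the positive operator $\bigl(r_A^{(2)}\bigr)^{\ast} r_A^{(2)}$ --- which is of the form $r_B^{(2)}$ for a square matrix $B$ over $\IZ G$ --- with respect to the matrix-amplified von Neumann trace; similarly $\ln {\det}_{\caln(G/N_i)}\bigl(r_{\overline{A}_i}^{(2)}\bigr) = \tfrac12 \int_{(0,\infty)} \ln\lambda \, d\mu_i$ for the corresponding measure $\mu_i$ over $G/N_i$. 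All these measures are supported in a fixed interval $[0,C]$ with $C$ depending only on the finitely many coefficients of $A$, have the same total mass $r$, and have $k$-th moments $\tr(B^{k})$, respectively $\tr\bigl((\overline{B}_i)^{k}\bigr)$. Since $B^{k}$ is supported on finitely many group elements and $\bigcap_i N_i = \{1\}$, for each $k$ these moments agree once $i$ is large; hence $\mu_i \to \mu$ weakly.

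The delicate step --- and the one I expect to be the main obstacle --- is part (iii), the passage to the limit: because $\ln\lambda \to -\infty$ as $\lambda \downarrow 0$, weak convergence of the $\mu_i$ does \emph{not} by itself control $\int_{(0,\infty)} \ln\lambda \, d\mu_i$. What is needed is an upper-semicontinuity estimate, proved by approximating $\ln$ from above on $[0,C]$ by the bounded continuous functions $\max(\ln, -k)$ and using $\limsup_i \mu_i(\{0\}) \le \mu(\{0\})$ for the closed set $\{0\}$; this yields $\limsup_i \int_{(0,\infty)} \ln\lambda \, d\mu_i \le \int_{(0,\infty)} \ln\lambda \, d\mu$. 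Since each integral on the left is $\ge 0$ by part (i), we get $\int_{(0,\infty)} \ln\lambda \, d\mu \ge 0$, i.e.\ ${\det}_{\caln(G)}\bigl(r_A^{(2)}\bigr) \ge 1$. The same scheme proves the conjecture for residually amenable groups, replacing finite quotients by amenable ones and using F{\o}lner exhaustions, and for sofic groups, using sofic approximations $G \to \mathrm{Sym}(d_i)$; in the sofic case the approximating maps are not homomorphisms, so the convergence $\tr\bigl((\overline{B}_i)^{k}\bigr) \to \tr(B^{k})$ in part (ii) is the genuinely hard input and is where the real work lies.
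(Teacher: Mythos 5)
The statement you are addressing is a \emph{conjecture}: it is open in general, and the paper does not (and could not) give a proof --- it only records in Remark~\ref{rem:status_of_Determinant_Conjecture} the cases in which the conjecture is known, citing Elek--Szab\'o, L\"uck's book~\cite{Lueck(2002)}, and Schick~\cite{Schick(2001b)}. You recognize this and accordingly restrict to residually finite groups, which is the case that matters here since, as noted in Subsection~\ref{subsec:On_the_fundamental_groups_of_3-manifolds}, every $3$-manifold group is residually finite. Your argument for that case is correct and is in essence the L\"uck--Schick approximation proof underlying the citations: integrality of $R_i^*R_i$ gives $\det \ge 1$ at each finite quotient; a common compact support together with moment convergence gives weak convergence $\mu_i \to \mu$; and the singularity of $\ln$ at $0$ is handled by upper semicontinuity, where your use of the Portmanteau inequality $\limsup_i\mu_i(\{0\}) \le \mu(\{0\})$ is exactly the cheap direction and is all that is needed --- notably, the reverse inequality (which \emph{is} what the Approximation Conjecture for $L^2$-Betti numbers requires) does not enter, and indeed it is logically downstream of the determinant bound, not upstream of it. Your closing remarks on the amenable and sofic extensions are likewise consistent with the cited literature. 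So: no gap in what you prove, and no conflict with the paper --- but you should be clear that this settles Conjecture~\ref{con:Determinant_Conjecture} only for a class of groups, not in general, and the paper deliberately keeps the statement as an open conjecture for that reason.
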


\begin{remark}[Status of the Determinant Conjecture]\label{rem:status_of_Determinant_Conjecture}
  We will want to assume that the Determinant Conjecture~\ref{con:Determinant_Conjecture}
  is true because then the condition of determinant class is automatically satisfied. This
  is an acceptable condition since the Determinant
  Conjecture~\ref{con:Determinant_Conjecture} is known for a very large class of groups.
  Namely, the following is known, see~\cite[Theorem~5]{Elek-Szabo(2005)},~%
\cite[Section~13.2]{Lueck(2002)},~\cite[Theorem~1.21]{Schick(2001b)}.  Let $\calf$ be
  the class of groups for which the Determinant
  Conjecture~\ref{con:Determinant_Conjecture} is true.  Then:
  \begin{enumerate}

  \item\label{rem:status_of_Determinant_Conjecture:amenable_quotient}
    Amenable quotient\\
    Let $H \subset G$ be a normal subgroup. Suppose that $H \in \calf$ and the quotient
    $G/H$ is amenable. Then $G \in \calf$;

  \item\label{rem:status_of_Determinant_Conjecture:direct_limit}
    Colimits\\
    If $G = \colim_{i \in I} G_i$ is the colimit of the directed system
    $\{G_i \mid i \in I\}$ of groups indexed by the directed set $I$ (with not necessarily
    injective structure maps) and each $G_i$ belongs to $\calf$, then $G$ belongs to
    $\calf$;

  \item\label{rem:status_of_Determinant_Conjecture:inverse_limit}
    Inverse limits\\
    If $G = \lim_{i \in I} G_i$ is the limit of the inverse system $\{G_i \mid i \in I\}$
    of groups indexed by the directed set $I$ and each $G_i$ belongs to $\calf$, then $G$
    belongs to $\calf$;

  \item\label{rem:status_of_Determinant_Conjecture:subgroups}
    Subgroups\\
    If $H$ is isomorphic to a subgroup of a group $G$ with $G \in \calf$, then
    $H \in \calf$;

  \item\label{rem:status_of_Determinant_Conjecture:quotient_with_finite_kernel}
    Quotients with finite kernel\\
    Let $1 \to K \to G \to Q \to 1$ be an exact sequence of groups. If $K$ is finite and
    $G$ belongs to $\calf$, then $Q$ belongs to $\calf$;

  \item\label{rem:status_of_Determinant_Conjecture:sofic_groups} Sofic groups belong to
    $\calf$;

  \item\label{rem:status_of_Determinant_Conjecture:3-manifold_groups} The fundamental
    group of a $3$-manifold belongs to $\calf$.
  \end{enumerate}

  The class of sofic groups is very large.  It is closed under direct and free products,
  taking subgroups, taking inverse and direct limits over directed index sets, and is
  closed under extensions with amenable groups as quotients and a sofic group as kernel.
  In particular it contains all residually amenable groups and fundamental groups of
  $3$-manifolds.  One expects that there exists non-sofic groups but no example is known.
  More information about sofic groups can be found for instance in~\cite{Elek-Szabo(2006)}
  and~\cite{Pestov(2008)}.

\end{remark}

For more information about the Determinant Conjecture we refer for instance
to~\cite[Chapter~13]{Lueck(2002)}.


\subsection{Approximation Conjecture for $L^2$-Betti numbers}%
\label{subsec:The-Approximation_Conjecturs_for_L2-Betti_numbers}

Let $G$ be a group together with an \emph{exhausting normal inverse system of subgroups}
$\{G_i \mid i \in I\}$ of normal subgroups of $G$ directed by inclusion over the directed
set $I$ such that $\bigcap_{i \in I} G_i = \{1\}$.  If $I$ is given by the natural
numbers, this boils down to a nested sequence of normal subgroups of $G$
\[
  G = G_0 \supset G_1 \supseteq G_2 \supseteq \cdots
\]
satisfying $\bigcap_{n \ge 1} G_n = \{1\}$.

\begin{notation}[Inverse systems and matrices]\label{not:inverse_systems_and_matrices}
  Let $R$ be a ring with $\IZ \subseteq R \subseteq \IC$. Given a matrix
  $A \in M_{r,s}(RG)$, let $A[i]\in M_{r,s}(R[G/G_i])$ be the matrix obtained from $A$ by
  applying elementwise the ring homomorphism $RG \to R[G/G_i]$ induced by the projection
  $G \to G/G_i$.  Let $r_A \colon RG^r \to RG^s$ and
  $r_{A[i]} \colon R[G/G_i]^r \to R[G/G_i]^s$ be the $RG$- and $R[G/G_i]$-homomorphisms
  given by right multiplication with $A$ and $A[i]$.  Let
  $r_A^{(2)} \colon L^2(G)^r \to L^2(G)^s$ and
  $r_{A[i]}^{(2)} \colon L^2(G/G_i)^r \to L^2(G/G_i)^s$ be the morphisms of Hilbert
  $\caln(G)$- and Hilbert $\caln(G/G_i)$-modules given by right multiplication with $A$
  and $A[i]$.
\end{notation}

\begin{conjecture}[Approximation Conjecture for $L^2$-Betti numbers]%
\label{con:Approximation_conjecture_for_L2-Betti_numbers}
  A group $G$ together with an exhausting normal inverse system of subgroups
  $\{G_i \mid i \in I\}$ satisfies the \emph{Approximation Conjecture for $L^2$-Betti
    numbers} if one of the following equivalent conditions holds:

  \begin{enumerate}

  \item Matrix version\\[1mm]
    Let $A \in M_{r,s}(\IQ G)$ be a matrix. Then
    \begin{eqnarray*}
      \lefteqn{\dim_{\caln(G)}\bigl(\ker\bigl(r_A^{(2)}\colon L^2(G)^r \to L^2(G)^s
      \bigr)\bigr)}
      & &
      \\ & \hspace{14mm} =  &
                              \lim_{i \in I} \;\dim_{\caln(G/G_i)}\big(\ker
                              \big(r_{A[i]}^{(2)}\colon L^2(G/G_i)^r \to L^2(G/G_i)^s \bigr)\bigr);
    \end{eqnarray*}

  \item $CW$-complex version\\[1mm]
    Consider normal covering $p \colon \overline{X} \xrightarrow{p} X$ with $G$ as group
    of deck transformation over a $CW$-complex $X$ of finite type. Put
    $\overline{X}[i] := G_i\backslash \overline{X}$.  Then we get a normal covering
    $p[i] \colon \overline{X}[i] \to X$ with $G/G_i$ as group of deck transformation and
    \begin{eqnarray*}
      b_p^{(2)}(\overline{X};\caln(G)) & = & \lim_{i \in I} \; b_p^{(2)}(\overline{X}[i];\caln(G/G_i)).
    \end{eqnarray*}

  \end{enumerate}
\end{conjecture}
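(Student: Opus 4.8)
The content of the statement is that the two displayed conditions, the matrix version and the $CW$-complex version, are equivalent, so the plan is to prove the two implications separately; I use the notation of Notation~\ref{not:inverse_systems_and_matrices} throughout. \emph{For the matrix version $\Rightarrow$ the $CW$-complex version}, the step is pure bookkeeping with von Neumann dimensions. Given a normal covering $\overline X\to X$ with deck group $G$ over a finite type $CW$-complex $X$, I would fix cellular data as in Subsection~\ref{subsec:L2-Betti_numbers_and_L2-torsion_of_regular_coverings_of_finite_CW-complexes}, which turns $C_*(\overline X)$ into a finite type based free $\IZ G$-chain complex whose $p$-th differential is $r_{A_p}$ for a matrix $A_p$ over $\IZ G\subseteq\IQ G$; since $C_*(\overline X[i])=\IZ[G/G_i]\otimes_{\IZ G}C_*(\overline X)$, its $p$-th differential is $r_{A_p[i]}$. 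Applying Lemma~\ref{lem:weak_exactness} to the weakly exact sequences $0\to\overline{\im(c_{p+1}^{(2)})}\to\ker(c_p^{(2)})\to H_p^{(2)}(C_*^{(2)}(\overline X))\to 0$ and $0\to\ker(c_{p+1}^{(2)})\to C_{p+1}^{(2)}(\overline X)\to\overline{\im(c_{p+1}^{(2)})}\to 0$, and using that $\dim_{\caln(G)}C_{p+1}^{(2)}(\overline X)$ and $\dim_{\caln(G/G_i)}C_{p+1}^{(2)}(\overline X[i])$ both equal the number $n_{p+1}$ of $(p+1)$-cells of $X$, I obtain
\[
  b_p^{(2)}(\overline X;\caln(G))=\dim_{\caln(G)}\bigl(\ker r_{A_p}^{(2)}\bigr)+\dim_{\caln(G)}\bigl(\ker r_{A_{p+1}}^{(2)}\bigr)-n_{p+1}
\]
and the identical formula over $\caln(G/G_i)$ with $A_p[i]$ and $A_{p+1}[i]$. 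The matrix version, applied to $A_p$ and $A_{p+1}$, says precisely that the two kernel dimensions on the right converge along $I$ to their values over $\caln(G)$; adding these two convergent nets and the constant $-n_{p+1}$ gives $b_p^{(2)}(\overline X;\caln(G))=\lim_{i\in I}b_p^{(2)}(\overline X[i];\caln(G/G_i))$, and since $X$, the covering and $p$ were arbitrary this is the $CW$-complex version.

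\emph{For the $CW$-complex version $\Rightarrow$ the matrix version}, let $A\in M_{r,s}(\IQ G)$. I would first clear denominators: if $N\in\IZ_{\ge 1}$ has $NA\in M_{r,s}(\IZ G)$, then $r_{NA}^{(2)}$ and $r_{(NA)[i]}^{(2)}$ differ from $r_A^{(2)}$ and $r_{A[i]}^{(2)}$ by the invertible scalar operator $N$ and hence have the same kernels, so I may assume $A\in M_{r,s}(\IZ G)$. Next I would realize $r_A$ cellularly. Let $H\le G$ be the finitely generated subgroup generated by the finitely many group elements occurring in $A$, and view $A\in M_{r,s}(\IZ H)$; after replacing $A$ by $A\oplus I_t$ for a suitable $t$ — which again leaves all the relevant kernels unchanged — I may assume $s$ is at least the minimal number of generators of $H$. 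I then build the $2$-dimensional $H$-$CW$-complex $\overline Y$ with $0$-skeleton $H$, with $1$-skeleton a connected $H$-cover of $\bigvee^s S^1$ (a Cayley-type graph for $s$ generators of $H$), and with $r$ free $H$-orbits of $2$-cells whose attaching maps, loops in $\overline Y^{(1)}$, are chosen so that the second cellular differential of $\overline Y$ is exactly $r_A\colon\IZ H^r\to\IZ H^s$; that an arbitrary $\IZ H$-matrix arises this way is the usual presentation-$2$-complex construction, the only content being a degree computation showing that the coefficient of a prescribed $1$-cell in the boundary of a prescribed $2$-cell can be made any integer. Finally I would induce: put $\overline X:=G\times_H\overline Y$ and $X:=\overline X/G=\overline Y/H$, a finite $2$-complex, so that $\overline X\to X$ is a normal covering with deck group $G$ (in general with $\overline X$ disconnected), $C_*(\overline X)=\IZ G\otimes_{\IZ H}C_*(\overline Y)$ has second differential $r_A\colon\IZ G^r\to\IZ G^s$, and $C_*(\overline X[i])$ has second differential $r_{A[i]}$. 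Since $\overline X$ is two-dimensional the formula of the previous step collapses to $b_2^{(2)}(\overline X;\caln(G))=\dim_{\caln(G)}(\ker r_A^{(2)})$ and $b_2^{(2)}(\overline X[i];\caln(G/G_i))=\dim_{\caln(G/G_i)}(\ker r_{A[i]}^{(2)})$, so feeding $X$ and $p=2$ into the $CW$-complex version yields the matrix version for $A$.

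\emph{The main obstacle.} Apart from one step, everything is dimension bookkeeping together with the triviality that finitely many convergent nets of reals may be added. The genuinely non-formal point is the topological realization of a prescribed group-ring matrix as a cellular second differential. The subtlety I would be most careful about is that when $G$ is not finitely generated there is no finite type $CW$-complex admitting a connected $G$-covering; this is exactly why the construction above first descends to a finitely generated subgroup $H\le G$ and then uses the induced — and in general disconnected — covering $G\times_H\overline Y$, whose connected components are copies of $\overline Y$, so that the degree argument for the $2$-cell attaching maps still applies componentwise.
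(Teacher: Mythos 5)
Your first implication (matrix version implies $CW$-complex version) is correct: the dimension bookkeeping via Lemma~\ref{lem:weak_exactness}, giving $b_p^{(2)}(\overline X;\caln(G))=\dim_{\caln(G)}(\ker r_{A_p}^{(2)})+\dim_{\caln(G)}(\ker r_{A_{p+1}}^{(2)})-n_{p+1}$ together with its analogue over $\caln(G/G_i)$, is exactly what is needed, and adding two convergent nets to a constant is unproblematic. (The paper gives no proof for this statement; it simply cites~\cite[Lemma~13.4 on page~455]{Lueck(2002)}.)

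The second implication has a genuine gap at precisely the step you flag as "the only content": it is not true that an arbitrary matrix $A\in M_{r,s}(\IZ H)$ can be realized as the second cellular differential of a $2$-dimensional $H$-$CW$-complex whose $1$-skeleton is a connected Cayley graph. If $\overline Y^{(1)}$ is a connected $H$-graph with $C_0=\IZ H$ and $C_1=\IZ H^s$, then $d_1\colon\IZ H^s\to\IZ H$ sends the $j$-th edge to $h_j-1$ and is nonzero (surjective onto the augmentation ideal), and the class of any attaching loop $S^1\to\overline Y^{(1)}$ lies in $H_1(\overline Y^{(1)})=\ker(d_1)$. Hence necessarily $\im(d_2)\subseteq\ker(d_1)\subsetneq\IZ H^s$, i.e., the rows of $A$ would have to be cycles, which fails for generic $A$. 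Your "degree computation" shows you can prescribe each individual coefficient, but the coefficients of a single attaching loop are constrained by the $\IZ H$-linear relation $d_1\circ d_2=0$, so the vector as a whole cannot be prescribed arbitrarily. This is not a technicality: it is exactly the difference between arbitrary matrices and matrices arising as Fox derivative matrices of presentations.

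Two standard repairs. (a) Go up in dimension. Take a finite presentation $2$-complex $K$ for $H$, form $L=K\vee\bigvee^s S^n$ for some $n\ge 3$, so the universal $H$-cover has $C_n=\IZ H^s$ with $d_n=0$. Since the Hurewicz map $\pi_n(\widetilde L)\to H_n(\widetilde L)=\IZ H^s$ hits the $\IZ H$-span of the $s$ spheres, i.e., everything, you may attach $r$ free $H$-orbits of $(n+1)$-cells realizing any prescribed $d_{n+1}=r_A$. Then $b_n^{(2)}(\overline X;\caln(G))=s-r+\dim_{\caln(G)}\ker(r_A^{(2)})$ and similarly over $\caln(G/G_i)$, and the $CW$-complex version at $p=n$ yields the matrix version. (b) Stay $2$-dimensional but enlarge the $1$-skeleton: use $g$ Cayley edges for $g$ generators of $H$ plus $s$ additional free $H$-orbits of loops killed by $d_1$, so $C_1=\IZ H^g\oplus\IZ H^s$ with $\IZ H^s\subseteq\ker(d_1)$; attach $2$-cells with $d_2=\begin{pmatrix}0\\ r_A\end{pmatrix}$, and then $\ker(d_2^{(2)})=\ker(r_A^{(2)})$, so $b_2^{(2)}(\overline X)=\dim\ker(r_A^{(2)})$ as you wanted. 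In either repair, your reduction from $G$ to a finitely generated subgroup $H$ and induction of the cover via $G\times_H(-)$ is exactly the right move and handles the non-finitely-generated case correctly.
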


The two conditions appearing in
Conjecture~\ref{con:Approximation_conjecture_for_L2-Betti_numbers} are equivalent
by~\cite[Lemma~13.4 on page~455]{Lueck(2002)}.

\begin{theorem}[The Determinant Conjecture implies the Approximation Conjecture for $L^2$-Betti numbers]%
\label{the:The_Determinant_Conjecture_implies_the_Approximation_Conjecture_for_L2-Betti_numbers}
  If for each $i \in I$ the quotient $G/G_i$ satisfies the Determinant
  Conjecture~\ref{con:Determinant_Conjecture}, then the conclusion of the Approximation
  Conjecture~\ref{con:Approximation_conjecture_for_L2-Betti_numbers} holds for
  $\{G_i \mid i \in I\}$.
\end{theorem}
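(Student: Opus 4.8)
The plan is to prove the matrix version of Conjecture~\ref{con:Approximation_conjecture_for_L2-Betti_numbers}, from which the $CW$-complex version follows by the equivalence~\cite[Lemma~13.4 on page~455]{Lueck(2002)} cited above. First I would reduce to a matrix $A \in M_{r,s}(\IZ G)$: clearing denominators replaces the given $\IQ G$-matrix by an integer multiple without changing any of the relevant kernels, and rescaling once more by a positive integer lets me assume $K := \|A\|_1 > 1$ (the case $A=0$ being trivial), where $\|A\|_1$ denotes the sum of the absolute values of all coefficients of all entries. Then $\|r_A^{(2)}\| \le K$, and since the projection $G \to G/G_i$ only merges group elements, $\|r_{A[i]}^{(2)}\| \le \|A[i]\|_1 \le K$ as well. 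Set $B := AA^* \in M_r(\IZ G)$ (with the standard involution $g \mapsto g^{-1}$ on $\IZ G$); the involution commutes with the projections, so $(r_A^{(2)})^* r_A^{(2)} = r_B^{(2)}$ and $(r_{A[i]}^{(2)})^* r_{A[i]}^{(2)} = r_{B[i]}^{(2)}$. Let $\mu$ be the spectral measure of the positive operator $r_B^{(2)}$ on $\caln(G)$ and $\mu_i$ that of $r_{B[i]}^{(2)}$ on $\caln(G/G_i)$; these are Borel measures on $[0,K^2]$ of total mass $r$. Since $\ker r_A^{(2)} = \ker r_B^{(2)}$ and the von Neumann dimension of $\ker r_B^{(2)}$ equals $\mu(\{0\})$, we get $\dim_{\caln(G)}(\ker r_A^{(2)}) = \mu(\{0\})$, and likewise $\dim_{\caln(G/G_i)}(\ker r_{A[i]}^{(2)}) = \mu_i(\{0\})$. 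So the goal becomes $\mu(\{0\}) = \lim_{i \in I} \mu_i(\{0\})$.

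The second step is weak convergence $\mu_i \to \mu$, proved by the method of moments. For $p \ge 0$ one has
\[
  \int_{[0,K^2]} t^p \, d\mu_i(t) = \tr_{\caln(G/G_i)}\bigl(r_{(B^p)[i]}^{(2)}\bigr),
\]
which is the coefficient of the unit element in the sum of the diagonal entries of the matrix $(B^p)[i] \in M_r(\IZ[G/G_i])$. Only finitely many non-unit elements of $G$ occur among the diagonal entries of $B^p$; as $\bigcap_{i \in I} G_i = \{1\}$ and $I$ is directed, there is an index beyond which none of them lies in $G_i$, and for those $i$ the right-hand side equals $\tr_{\caln(G)}(r_{B^p}^{(2)}) = \int t^p\, d\mu(t)$. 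Hence all moments of $\mu_i$ converge to those of $\mu$; since the measures live on the fixed compact interval $[0,K^2]$ and have bounded mass, the Weierstrass approximation theorem yields $\int f\, d\mu_i \to \int f\, d\mu$ for every $f \in C([0,K^2])$, and therefore $\mu_i([0,\delta]) \to \mu([0,\delta])$ at every $\delta$ with $\mu(\{\delta\}) = 0$. One half of the goal is then immediate: for such a continuity point $\delta > 0$ we have $\mu_i(\{0\}) \le \mu_i([0,\delta]) \to \mu([0,\delta])$, and letting $\delta \downarrow 0$ through continuity points (continuity from above of the finite measure $\mu$) gives $\limsup_{i} \mu_i(\{0\}) \le \mu(\{0\})$.

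The remaining inequality $\liminf_i \mu_i(\{0\}) \ge \mu(\{0\})$ is the crux, and it is exactly here that the Determinant Conjecture is needed: nothing above prevents spectral mass of $r_{B[i]}^{(2)}$ from accumulating near $0$ without quite reaching it, which would let $\mu_i(\{0\})$ collapse in the limit. By hypothesis $\det_{\caln(G/G_i)}(r_{A[i]}^{(2)}) \ge 1$, which by the definition of the Fuglede--Kadison determinant unwinds to $\int_{(0,\infty)} \ln t \, d\mu_i(t) \ge 0$. Splitting this integral at $1$ and using $\supp \mu_i \subseteq [0,K^2]$ and $\mu_i([0,K^2]) = r$ gives
\[
  \int_{(0,1]} (-\ln t) \, d\mu_i(t) \le \int_{(1,K^2]} \ln t \, d\mu_i(t) \le 2r \ln K =: C,
\]
a bound \emph{uniform in $i$} — the sole place the hypothesis is used. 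Consequently, for $0 < \delta < 1$, using $-\ln t \ge -\ln\delta > 0$ on $(0,\delta]$,
\[
  \mu_i\bigl((0,\delta]\bigr) \le \frac{1}{-\ln\delta} \int_{(0,\delta]} (-\ln t) \, d\mu_i(t) \le \frac{C}{-\ln\delta},
\]
so $\mu_i([0,\delta]) \le \mu_i(\{0\}) + C/|\ln\delta|$. Choosing $\delta$ to be a continuity point of $\mu$ and passing to the limit over $I$, the second step gives $\mu([0,\delta]) \le \liminf_i \mu_i(\{0\}) + C/|\ln\delta|$, whence $\mu(\{0\}) \le \liminf_i \mu_i(\{0\}) + C/|\ln\delta|$; letting $\delta \downarrow 0$ yields $\liminf_i \mu_i(\{0\}) \ge \mu(\{0\})$.

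Putting the two halves together gives $\lim_{i \in I} \mu_i(\{0\}) = \mu(\{0\})$, which is the matrix form of the Approximation Conjecture for $\{G_i \mid i \in I\}$, and the $CW$-complex form follows from the cited equivalence. The step I expect to be the main obstacle is the third one: extracting from the determinant bound the uniform estimate $\mu_i\bigl((0,\delta]\bigr) \le C/|\ln\delta|$ that stops spectral mass from escaping to $0$ in the limit — this is essentially the whole content of Lück's approximation argument, whereas the rest is routine bookkeeping with spectral measures, the method of moments, and the trace identity expressing $\tr_{\caln(H)}(r_C^{(2)})$ as the coefficient of the unit element in the diagonal of $C$.
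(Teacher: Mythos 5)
Your proof is correct and is essentially the standard L\"uck--Schick approximation argument, which is precisely what the paper cites (L\"uck~2002, Theorem~13.3(1), and Schick~2001) rather than proving in place. The two halves are exactly as in those references: moment convergence of the spectral measures $\mu_i \to \mu$ on the fixed compact interval gives $\limsup_i \mu_i(\{0\}) \le \mu(\{0\})$, and the Determinant Conjecture hypothesis for each $G/G_i$ supplies the uniform bound $\mu_i((0,\delta]) \le C/|\ln\delta|$ that prevents spectral mass from escaping to $0$ and yields $\liminf_i \mu_i(\{0\}) \ge \mu(\{0\})$.
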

\begin{proof}
  See~\cite[Theorem~13.3 (1) on page~454]{Lueck(2002)} and~\cite{Schick(2001b)}.
\end{proof}

Suppose that each quotient $G/G_i$ is finite. Then we
rediscover~\eqref{list:main_properties_of_rho2(widetildeX):approximation} appearing in
Subsection~\ref{subsec:Basic_properties_of_L2-Betti_numbers_and_L2-torsion_of_universal_coverings_of_finite_CW-complexes}
from Remark~\ref{rem:status_of_Determinant_Conjecture} and
Theorem~\ref{the:The_Determinant_Conjecture_implies_the_Approximation_Conjecture_for_L2-Betti_numbers}.

For more information about the Approximation Conjecture for $L^2$-Betti
numbers~\ref{con:Approximation_conjecture_for_L2-Betti_numbers} we refer for instance
to~\cite[~Chapter~13]{Lueck(2002)} and~\cite[Conjecture~1.10]{Schick(2001b)}.


\subsection{Approximation Conjectures for Fuglede-Kadison determinants and $L^2$-torsion}%
\label{subsec:The_Approximation_Conjecture_for_Fuglede-Kadison_determinants_and_L2-torsion}

Next we turn to Fuglede-Kadison determinants and $L^2$-torsion.


\subsubsection{Approximation Conjecture for Fuglede-Kadison determinants}%
\label{subsubsec:Approximation_Conjecture_for_Fuglede-Kadison_determinants}

\begin{conjecture}[Approximation Conjecture for Fuglede-Kadison determinants]%
\label{con:Approximation_conjecture_for_Fuglede-Kadison_determinants_with_arbitrary_index}
  A group $G$ together with an exhausting normal inverse system of subgroups
  $\{G_i \mid i \in I\}$ satisfies the \emph{Approximation Conjecture for Fuglede-Kadison
    determinants} if for any matrix $A \in M_{r,s}(\IQ G)$ we get for the Fuglede-Kadison
  determinant
  \begin{eqnarray*}
    {\det}_{\caln(G)}\bigl(r_A^{(2)}\colon L^2(G)^r \to L^2(G)^s\bigr)   & > &  0;
    \\
    {\det}_{\caln(G/G_i)}\big(r_{A[i]}^{(2)}\colon L^2(G/G_i)^r \to L^2(G/G_i)^s\bigr)   & > &  0,
  \end{eqnarray*}
  and
  \begin{eqnarray*}
    \lefteqn{{\det}_{\caln(G)}\bigl(r_A^{(2)}\colon L^2(G)^r \to L^2(G)^s\bigr)}
    & &
    \\ & \hspace{14mm} =  &
                            \lim_{i \in I}\; {\det}_{\caln(G/G_i)}\big(r_{A[i]}^{(2)}\colon L^2(G/G_i)^r \to L^2(G/G_i)^s\bigr),
  \end{eqnarray*}
  where the existence of the limit above is part of the claim.
\end{conjecture}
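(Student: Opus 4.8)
The plan is to reduce the statement to the convergence of spectral density functions and then to push the argument behind Theorem~\ref{the:The_Determinant_Conjecture_implies_the_Approximation_Conjecture_for_L2-Betti_numbers} one step further: that theorem already controls the \emph{value at the origin} of these functions, and what is needed here is convergence of the whole Lebesgue--Stieltjes integral $\int_{0+}^{\infty}\ln\lambda\,dF$ computing the Fuglede--Kadison determinant. By Remark~\ref{rem:status_of_Determinant_Conjecture} one may add without real loss the hypothesis that each quotient $G/G_i$ satisfies the Determinant Conjecture~\ref{con:Determinant_Conjecture}; this makes the clause $\det_{\caln(G/G_i)}(r_{A[i]}^{(2)})>0$ automatic and will be seen to force $\det_{\caln(G)}(r_A^{(2)})>0$ as well. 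Fix $A$, write $F(\lambda):=F_{r_A^{(2)}}(\lambda)$ and $F_i(\lambda):=F_{r_{A[i]}^{(2)}}(\lambda)$ for the spectral density functions of Definition~\ref{def:spectral_density_function} (non-decreasing, right-continuous, supported in $[0,K]$ for a uniform $K\ge 1$ bounding all operator norms $\|r_{A[i]}^{(2)}\|$, e.g.\ the $\ell^1$-norm of the entries of $A$), and let $dF$, $dF_i$ be the associated measures on $[0,\infty)$, carrying atoms $F(0)=\dim_{\caln(G)}\ker(r_A^{(2)})$, resp.\ $F_i(0)$, at the origin; then $\ln\det_{\caln(G)}(r_A^{(2)})=\int_{0+}^{\infty}\ln\lambda\,dF$ whenever this converges, and likewise over each $\caln(G/G_i)$.

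\textbf{Convergence away from zero.} The classical input --- the same moment convergence that drives the $L^2$-Betti number approximation --- is that $\int\lambda^{2k}\,dF_i=\tr_{\caln(G/G_i)}\big(((r_{A[i]}^{(2)})^{*}r_{A[i]}^{(2)})^{k}\big)\to\tr_{\caln(G)}\big(((r_A^{(2)})^{*}r_A^{(2)})^{k}\big)=\int\lambda^{2k}\,dF$ for every $k\ge 0$, because the relevant coefficient of a fixed element of $\IQ G$ is unchanged under $G\to G/G_i$ once $G_i$ misses its finite support, and $\bigcap_i G_i=\{1\}$. As all $dF_i$ live in the compact interval $[0,K]$, this gives weak convergence of measures, hence $F_i(\lambda)\to F(\lambda)$ at every continuity point of $F$ and $\int_{\epsilon}^{K}\ln\lambda\,dF_i\to\int_{\epsilon}^{K}\ln\lambda\,dF$ for every continuity point $\epsilon>0$ (the integrand is bounded and continuous on $[\epsilon,K]$).

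\textbf{The easy inequality.} For $\epsilon<1$ one has $\ln\lambda<0$ on $(0,\epsilon]$, so $\int_{0+}^{\epsilon}\ln\lambda\,dF_i\le 0$ and therefore $\ln\det_{\caln(G/G_i)}(r_{A[i]}^{(2)})\le\int_{\epsilon}^{K}\ln\lambda\,dF_i$; taking $\limsup$ and using the previous step gives $\limsup_i\ln\det_{\caln(G/G_i)}(r_{A[i]}^{(2)})\le\int_{\epsilon}^{K}\ln\lambda\,dF$ for every small continuity point $\epsilon$. The right-hand side decreases as $\epsilon\downarrow 0$ and stays $\ge\limsup_i\ln\det_{\caln(G/G_i)}(r_{A[i]}^{(2)})\ge 0$ by the Determinant Conjecture for the $G/G_i$, so $\int_{0+}^{K}\ln\lambda\,dF$ converges; this already yields $\det_{\caln(G)}(r_A^{(2)})>0$ together with $\limsup_i\det_{\caln(G/G_i)}(r_{A[i]}^{(2)})\le\det_{\caln(G)}(r_A^{(2)})$.

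\textbf{The reverse inequality, and where it is hard.} What remains is $\liminf_i\det_{\caln(G/G_i)}(r_{A[i]}^{(2)})\ge\det_{\caln(G)}(r_A^{(2)})$, equivalently that the tails $\int_{0+}^{\epsilon}\ln\lambda\,dF_i$ tend to $0$ as $\epsilon\downarrow 0$ \emph{uniformly in $i$}. The Determinant Conjecture for $G/G_i$ gives $\int_{0+}^{K}\ln\lambda\,dF_i\ge 0$, from which one extracts the uniform bound $F_i(\lambda)-F_i(0)\le C/|\!\ln\lambda|$ for $0<\lambda<1$ with $C$ depending only on $A$; but this merely logarithmic rate is exactly too weak to make the tails uniformly small, since $\int_{0}d\lambda/(\lambda|\!\ln\lambda|)$ diverges. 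This is the genuine obstacle and the reason the statement is posed as a conjecture: one needs better-than-logarithmic uniform control of the near-zero spectrum of the operators $r_{A[i]}^{(2)}$. Such control --- and then the conjecture, via a uniform-integrability argument justifying the interchange of $\lim_i$ and $\int_{0+}$ --- is available under extra hypotheses: a uniform spectral gap at $0$, a uniform Novikov--Shubin-type estimate $F_i(\lambda)-F_i(0)\le c\lambda^{\alpha}$ with $\alpha>0$, or, in the chain-complex reformulation, a uniform control of torsion growth. Identifying the weakest such hypothesis, and verifying it for the finite-quotient towers of $3$-manifold groups --- where even this special case is the open ``torsion-growth'' problem --- is the natural programme.
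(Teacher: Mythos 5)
This is an open conjecture; the paper offers no proof, only a paragraph immediately after the statement pointing to a strategy in~\cite{Lueck(2016_l2approx)} and~\cite{Koch-Lueck(2014)} and to the central role of a uniform-integrability condition on the spectral density functions near zero. You do not claim to close the gap, and you say so explicitly --- which is the correct assessment of the situation. The part you do establish is the known part: moment convergence of the spectral measures on a compact interval gives weak convergence $dF_i \to dF$ and hence $\int_{\epsilon}^{K}\ln\lambda\,dF_i \to \int_{\epsilon}^{K}\ln\lambda\,dF$ at continuity points $\epsilon$; combined with a lower bound on $\det_{\caln(G/G_i)}(r_{A[i]}^{(2)})$ coming from the Determinant Conjecture for the quotients, this yields both the finiteness of $\ln\det_{\caln(G)}(r_A^{(2)})$ (hence positivity) and the one-sided inequality $\limsup_i\det_{\caln(G/G_i)}(r_{A[i]}^{(2)}) \le \det_{\caln(G)}(r_A^{(2)})$. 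The obstacle you name --- the only control on $F_i(\lambda)-F_i(0)$ coming out of $\det \ge c$ is of order $1/|\ln\lambda|$, which is exactly too weak to make $\int_{0+}^{\epsilon}\ln\lambda\,dF_i$ uniformly small --- is precisely what the paper's discussion refers to when it says ``the spectrum has to be uniformly thin at zero,'' and it is why the conjecture remains open.

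Two small caveats worth flagging. First, the Determinant Conjecture~\ref{con:Determinant_Conjecture} is stated for integer matrices; for $A \in M_{r,s}(\IQ G)$ one must clear denominators, which yields a uniform lower bound $\det_{\caln(G/G_i)}(r_{A[i]}^{(2)}) \ge c_A > 0$ with $c_A$ depending on $A$ rather than $\ge 1$. That is still enough for your argument, but the phrase ``$\ge 0$ by the Determinant Conjecture for the $G/G_i$'' should be read with this adjustment. Second, assuming the Determinant Conjecture for every $G/G_i$ is a genuine extra hypothesis not contained in the conjecture's statement; it is, however, a natural one, since without it even the displayed positivity $\det_{\caln(G/G_i)}(r_{A[i]}^{(2)}) > 0$ is not guaranteed, and the paper implicitly works under it as well (via Remark~\ref{rem:status_of_Determinant_Conjecture}).
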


\begin{remark}[$\IQ$-coefficients are necessary]\label{rem:IQ-coefficients_are_necessary}
  Recall that the Atiyah Conjecture may be true if we consider
  matrices over the complex group ring instead of the rational group ring.
  Conjecture~\ref{con:Approximation_conjecture_for_Fuglede-Kadison_determinants_with_arbitrary_index}
  does not hold if one replaces $\IQ$ by $\IC$ by the following result appearing
  in~\cite[Example~13.69 on page~481]{Lueck(2002)}.

  There exists a sequence of integers $2 \le n_1 < n_2 < n_3 < \cdots$ and a real number
  $s$ such that for $G = \IZ$ and $G_i = n_i \cdot \IZ$ and the $(1,1)$-matrix $A$ given
  by the element $z-\exp(2\pi i s)$ in $\IC[\IZ] = \IC[z,z^{-1}]$ we get for all $i \ge 1$
  \begin{eqnarray*}
    \ln\bigl({\det}^{(2)}_{\caln(G)}(r_A^{(2)})\bigr) & = & 0;
    \\
    \frac{\ln\bigl(\det(r_{A[i]}^{(2)})\bigr)}{[G:G_i]} & \le & - 1/2.
  \end{eqnarray*}
\end{remark}

A strategy for the proof of
Conjecture~\ref{con:Approximation_conjecture_for_Fuglede-Kadison_determinants_with_arbitrary_index}
is discussed in~\cite[Section~17]{Lueck(2016_l2approx)}, see also~\cite{Koch-Lueck(2014)}.
The uniform integrability condition appearing in~\cite[Theorem~16.3~(v) and
Remark~16.13]{Lueck(2016_l2approx)} seems to play a key role. It would be automatically
satisfied if one has a uniform estimate on the spectral density functions of the
intermediate stages for $i \in I$. Roughly speaking, the spectrum has to be uniformly thin
at zero for $r_{A[i]}^{(2)}$ each $i \in I$.  The crudest way to guarantee this condition
is to require a uniform gap at zero in the spectrum for $r_{A[i]}^{(2)}$ each $i \in I$,
see~\cite[Lemma~16.14 and Remark~16.15]{Lueck(2016_l2approx)}.


\subsubsection{The chain complex version}%
\label{subsubsec:The_chain_complex_version}

\begin{notation}[Inverse systems and chain complexes]\label{not:inverse_system_and_chain_complexes}
  Let $C_*$ be a finite based free $\IQ G$-chain complex.  In the sequel we denote by
  $C[i]_*$ the $\IQ [G/G_i]$-chain complex $\IQ [G/G_i]\otimes_{\IQ G} C_* $, by
  $C^{(2)}_*$ the finite Hilbert $\caln(G)$-chain complex $L^2(G) \otimes_{\IQ G} C_*$,
  and by $C[i]^{(2)}_*$ the finite Hilbert $\caln(G/G_i)$-chain complex
  $L^2(G/G_i) \otimes_{\IQ[G/G_i]} C[i]_* $. The $\IQ G$-basis for $C_*$ induces a
  $\IQ[G/G_i]$-basis for $C[i]_*$ and Hilbert space structures on $C^{(2)}_*$ and
  $C[i]_*^{(2)}$ using the standard Hilbert structure on $L^2(G)$ and $L^2(G/G_i)$.  We
  emphasize that in the sequel after fixing a $\IQ G$-basis for $C_*$ the
  $\IQ[G/G_i]$-basis for $C_*[i]$ and the Hilbert structures on $C_*^{(2)}$ and
  $C[i]_*^{(2)}$ have to be chosen in this particular way.

  Denote by
  \begin{eqnarray}
    \rho^{(2)}(C_*) 
    & := & 
           - \sum_{p \ge 0} (-1)^p \cdot \ln\bigl({\det}_{\caln(G)}(c_p^{(2)})\bigr);
           \label{L2-torsion_for_C_over_cakln(G)}
    \\
    \rho^{(2)}(C[i]_*) 
    & := & 
           - \sum_{p \ge 0} (-1)^p \cdot \ln\bigl({\det}_{\caln(G/G_i)}(c[i]_p^{(2)})\bigr),
           \label{L2-torsion_for_C[i]_over_caln(G/G_i)}
  \end{eqnarray}
  their \emph{$L^2$-torsion} over $\caln(G)$ and $\caln(G/G_i)$ respectively, provided
  that $C_*$ and $C[i]_*$ are of determinant class.
\end{notation}

We have the following chain complex version of
Conjecture~\ref{con:Approximation_conjecture_for_Fuglede-Kadison_determinants_with_arbitrary_index}
which is obviously equivalent to
Conjecture~\ref{con:Approximation_conjecture_for_Fuglede-Kadison_determinants_with_arbitrary_index}

\begin{conjecture}[Approximation Conjecture for $L^2$-torsion of chain complexes]%
\label{con:Approximation_conjecture_for_L2-torsion_of_chain_complexes}
  A group $G$ together with an exhausting normal inverse system $\{G_i \mid i \in I\}$
  satisfies the \emph{Approximation Conjecture for $L^2$-torsion of chain complexes} if
  the finite based free $\IQ G$-chain complex $C_*$ and $C[i]_*$ are of determinant class
  and we have
  \[
    \rho^{(2)}(C_*) = \lim_{i \in I} \;\rho^{(2)}(C[i]_*).
  \]
\end{conjecture}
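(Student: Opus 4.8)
The plan is to prove the equivalence of Conjecture~\ref{con:Approximation_conjecture_for_L2-torsion_of_chain_complexes} and Conjecture~\ref{con:Approximation_conjecture_for_Fuglede-Kadison_determinants_with_arbitrary_index} by exploiting the elementary dictionary between matrices over $\IQ G$ and finite based free $\IQ G$-chain complexes: a finite based free $\IQ G$-chain complex is, once its bases are fixed, nothing but a finite list of matrices $A_p$ over $\IQ G$ realizing the differentials $c_p$, and conversely a single matrix is the same datum as a two-term such complex. Under this dictionary, with the conventions of Notation~\ref{not:inverse_systems_and_matrices} and Notation~\ref{not:inverse_system_and_chain_complexes}, the Hilbert-module map $c_p^{(2)}$ is identified with $r_{A_p}^{(2)}$ and $c[i]_p^{(2)}$ with $r_{A_p[i]}^{(2)}$, so the two conjectures are just two packagings of the same assertion.

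For the implication Conjecture~\ref{con:Approximation_conjecture_for_Fuglede-Kadison_determinants_with_arbitrary_index} $\Rightarrow$ Conjecture~\ref{con:Approximation_conjecture_for_L2-torsion_of_chain_complexes}, let $C_*$ be a finite based free $\IQ G$-chain complex and let $A_p$ be the matrix of $c_p$ with respect to the chosen bases. Applying the hypothesis to each $A_p$ gives ${\det}_{\caln(G)}(c_p^{(2)}) > 0$ and ${\det}_{\caln(G/G_i)}(c[i]_p^{(2)}) > 0$ for all $p$ and all $i \in I$, so $C_*$ and all $C[i]_*$ are of determinant class, and moreover
\[
  \ln {\det}_{\caln(G)}(c_p^{(2)}) \;=\; \lim_{i \in I}\, \ln {\det}_{\caln(G/G_i)}(c[i]_p^{(2)})
\]
for each $p$, the limit existing. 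Since $C_*$ is finite, only finitely many $p$ contribute; each of these finitely many terms converges, hence so does the alternating sum, and
\[
  \rho^{(2)}(C_*) \;=\; -\sum_{p \ge 0} (-1)^p \ln {\det}_{\caln(G)}(c_p^{(2)}) \;=\; \lim_{i \in I}\left(-\sum_{p \ge 0} (-1)^p \ln {\det}_{\caln(G/G_i)}(c[i]_p^{(2)})\right) \;=\; \lim_{i \in I}\, \rho^{(2)}(C[i]_*).
\]

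For the converse, given $A \in M_{r,s}(\IQ G)$ form the finite based free $\IQ G$-chain complex $D_*$ concentrated in degrees $0$ and $1$, with $D_1 = \IQ G^r$ and $D_0 = \IQ G^s$ equipped with the standard bases and $d_1$ the map corresponding to $A$. Then $D[i]_*$ is built analogously from $A[i]$, one has $d_1^{(2)} = r_A^{(2)}$ and $d[i]_1^{(2)} = r_{A[i]}^{(2)}$, and $\rho^{(2)}(D_*) = \ln {\det}_{\caln(G)}(r_A^{(2)})$ while $\rho^{(2)}(D[i]_*) = \ln {\det}_{\caln(G/G_i)}(r_{A[i]}^{(2)})$, since the remaining differentials are zero operators, whose Fuglede--Kadison determinant is $1$. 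Hence Conjecture~\ref{con:Approximation_conjecture_for_L2-torsion_of_chain_complexes} applied to $D_*$ says precisely that ${\det}_{\caln(G)}(r_A^{(2)}) > 0$, that ${\det}_{\caln(G/G_i)}(r_{A[i]}^{(2)}) > 0$ for all $i$, and that $\ln {\det}_{\caln(G)}(r_A^{(2)}) = \lim_{i \in I} \ln {\det}_{\caln(G/G_i)}(r_{A[i]}^{(2)})$; applying $\exp$ and using its continuity together with the positivity just obtained recovers Conjecture~\ref{con:Approximation_conjecture_for_Fuglede-Kadison_determinants_with_arbitrary_index}.

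There is in fact no serious obstacle here; the paper itself flags the equivalence as obvious. The only two points requiring a line of care are the basis-dependent identification of $c_p^{(2)}$ with $r_{A_p}^{(2)}$ (respecting the row-vector / right-action convention of Notation~\ref{not:inverse_systems_and_matrices}, a discrepancy in which would at worst introduce a transpose or adjoint that is invisible to the Fuglede--Kadison determinant by Theorem~\ref{the:main_properties_of_det}~\eqref{the:main_properties_of_det:det(f)_is_det(fast)}), and the interchange of the finite alternating sum over $p$ with the limit over $i \in I$, which is legitimate precisely because the chain complexes are finite. Both are routine.
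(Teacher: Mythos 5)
Your proof is correct and spells out exactly the dictionary between matrices and two-term chain complexes that the paper has in mind when it asserts, without further detail, that Conjecture~\ref{con:Approximation_conjecture_for_L2-torsion_of_chain_complexes} is ``obviously equivalent'' to Conjecture~\ref{con:Approximation_conjecture_for_Fuglede-Kadison_determinants_with_arbitrary_index}. The finiteness of the chain complex (allowing the interchange of the finite alternating sum over $p$ with the limit over $i$), the convention that the zero operator has Fuglede--Kadison determinant~$1$, and the invariance of the determinant under taking adjoints from Theorem~\ref{the:main_properties_of_det}~\eqref{the:main_properties_of_det:det(f)_is_det(fast)} are all invoked correctly.
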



\subsubsection{Analytic $L^2$-torsion}%
\label{subsubsec:Analytic_L2-torsion}

Let $\overline{M}$ be a Riemannian manifold without boundary that comes with a proper free
cocompact isometric $G$-action.  Denote by $\overline{M}[i]$ the Riemannian manifold
obtained from $\overline{M}$ by dividing out the $G_i$-action. The Riemannian metric on
$\overline{M}[i]$ is induced by the one on $M$. There is an obvious proper free cocompact
isometric $G/G_i$-action on $\overline{M}[i]$ induced by the given $G$-action on
$\overline{M}$. Note that $M = \overline{M}/G$ is a closed Riemannian manifold and we get
a $G$-covering $\overline{M} \to M$ and a $G/G_i$-covering $\overline{M}[i] \to M$ for the
closed Riemannian manifold $M= \overline{M}/G$, which are compatible with the Riemannian
metrics. Denote by
\begin{eqnarray}
  &\rho^{(2)}_{\an}(\overline{M};\caln(G))  \in \IR;&
                                                      \label{L2-torsion_for_M_over_cakln(G)}
  \\
  &\rho^{(2)}_{\an}(\overline{M}[i];\caln(G/G_i))  \in  \IR,&
                                                              \label{L2-torsion_for_M[i]_over_caln(G/G_i)}
\end{eqnarray}
their \emph{analytic $L^2$-torsion} over $\caln(G)$ and $\caln(G/G_i)$ respectively,
provided that $\overline{M}$ and $\overline{M}[i]$ are of determinant class.  For the
notion of analytic $L^2$-torsion we refer for instance to~\cite[Chapter~3]{Lueck(2002)}.
Burghelea-Friedlander-Kappeler-McDonald~\cite{Burghelea-Friedlander-Kappeler-McDonald(1996a)}
have shown that the analytic $L^2$-torsion agrees with the $L^2$-torsion defined in terms
of the cellular chain complex in~\eqref{rho_upper_(2)(overline(X))} in the $L^2$-acyclic
case.

We will not discuss the condition of determinant class here and in the sequel.  This is
not necessary if each $G_i$ satisfies the Determinant
Conjecture~\ref{con:Determinant_Conjecture}, which is true for a very large class of
groups, see Remark~\ref{rem:status_of_Determinant_Conjecture}.

\begin{conjecture}[Approximation Conjecture for analytic $L^2$-torsion]%
\label{con:Approximation_conjecture_for_analytic_L2-torsion}
  Consider a group $G$ together with an exhausting normal inverse system
  $\{G_i \mid i \in I\}$.  Let $\overline{M}$ be a Riemannian manifold without boundary
  that comes with a proper free cocompact isometric $G$-action. Then $\overline{M}$ and
  $\overline{M}[i]$ are of determinant class and
  \[\rho^{(2)}_{\an}(\overline{M};\caln(G))
    = \lim_{i \in I} \;\rho^{(2)}_{\an}(M[i];\caln(G/G_i)).
  \]
\end{conjecture}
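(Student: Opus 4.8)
The plan is to reduce this analytic approximation statement first to the combinatorial one already formulated, namely Conjecture~\ref{con:Approximation_conjecture_for_L2-torsion_of_chain_complexes} (equivalently Conjecture~\ref{con:Approximation_conjecture_for_Fuglede-Kadison_determinants_with_arbitrary_index}), and then to isolate the single spectral input that really has to be proved. First I would fix a smooth triangulation of the closed Riemannian manifold $M = \overline{M}/G$. Pulling it back along $\overline{M} \to M$ gives a free cocompact $G$-$CW$-structure on $\overline{M}$ with finite based free cellular $\IZ G$-chain complex $C_* = C_*(\overline{M})$, and correspondingly finite based free $\IZ[G/G_i]$-chain complexes $C[i]_* = \IZ[G/G_i]\otimes_{\IZ G}C_*$ computing $\overline{M}[i] = G_i\backslash\overline{M}$. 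Throughout one may assume determinant class: in all cases of interest the groups $G$ and $G/G_i$ satisfy the Determinant Conjecture~\ref{con:Determinant_Conjecture} (see Remark~\ref{rem:status_of_Determinant_Conjecture}), which is exactly the blanket assumption made just before the statement.

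The second step is to pass from analytic to cellular $L^2$-torsion. By the theorem of Burghelea-Friedlander-Kappeler-McDonald cited above, for the closed Riemannian manifold $M$ with the chosen smooth triangulation and the $G$-covering $\overline{M}\to M$ one has $\rho^{(2)}_{\an}(\overline{M};\caln(G)) = \rho^{(2)}(C_*;\caln(G))$ in the $\det$-$L^2$-acyclic case, and the same identity with $G$ replaced by $G/G_i$ for each $i$ (for finite $G/G_i$ this specializes to the classical Cheeger-M\"uller theorem on the closed manifold $\overline{M}[i]$; for infinite $G/G_i$ it is again the Burghelea-Friedlander-Kappeler-McDonald theorem for the covering $\overline{M}[i]\to M$). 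If $\overline{M}$ is not $L^2$-acyclic the identity acquires a correction term, an alternating sum of logarithms of Fuglede-Kadison determinants of the $L^2$-Hodge-de Rham comparison isomorphisms relating the spaces of $L^2$-harmonic $p$-forms to the reduced $L^2$-homology modules $H_p^{(2)}$ carrying their cellular Hilbert structures; one then has to show separately that these correction terms converge along $I$, which should follow from the approximation of $L^2$-Betti numbers (Theorem~\ref{the:The_Determinant_Conjecture_implies_the_Approximation_Conjecture_for_L2-Betti_numbers}) together with a uniform control of the comparison maps.

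After this reduction the remaining assertion is exactly $\rho^{(2)}(C_*) = \lim_{i\in I}\rho^{(2)}(C[i]_*)$, i.e.\ Conjecture~\ref{con:Approximation_conjecture_for_L2-torsion_of_chain_complexes}, which unwinds into a statement about each differential $c_p$: writing $A$ for the matrix of $c_p$, one needs $\ln{\det}_{\caln(G/G_i)}(r_{A[i]}^{(2)}) \to \ln{\det}_{\caln(G)}(r_A^{(2)})$, i.e.\ $\int_{0+}^{\infty}\ln\lambda\;dF_{A[i]} \to \int_{0+}^{\infty}\ln\lambda\;dF_A$. Here the moments $\int\lambda^{2k}\,dF_{A[i]} = \tr_{\caln(G/G_i)}\bigl((A[i]^*A[i])^k\bigr)$ converge to $\int\lambda^{2k}\,dF_A$ by the elementary trace argument underlying the approximation theorem~\cite{Lueck(1994c)}, so the spectral density measures $dF_{A[i]}$, all supported in a fixed interval $[0,c]$, converge weakly; this already yields convergence of the integral over $[\varepsilon,c]$ for every $\varepsilon>0$, and the atoms $F_{A[i]}(0) = \dim_{\caln(G/G_i)}(\ker r_{A[i]}^{(2)})$ converge to $F_A(0)$ under the Determinant Conjecture hypothesis.

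The main obstacle is therefore the behaviour near $\lambda = 0$: one has to rule out that a positive amount of spectral mass of the operators $r_{A[i]}^{(2)}$ concentrates so rapidly at $0$ that $\int_{0+}^{\varepsilon}\ln\lambda\;dF_{A[i]}$ fails to converge to $\int_{0+}^{\varepsilon}\ln\lambda\;dF_A$ as $i\to\infty$ and $\varepsilon\to 0$. This is precisely the uniform integrability condition of~\cite[Section~16--17]{Lueck(2016_l2approx)}; it holds automatically when the family $\{r_{A[i]}^{(2)}\mid i\in I\}$ has a uniform spectral gap at $0$, or more generally a uniformly thin spectrum there (a uniform bound $F_{A[i]}(\lambda) - F_{A[i]}(0) \le C\lambda^{\alpha}$ with $\alpha>0$ for small $\lambda$), and that is how the conjecture can be verified in the presently known cases. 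Producing such a uniform spectral estimate for a general exhausting normal inverse system $\{G_i\mid i\in I\}$ \--- equivalently, bounding the Fuglede-Kadison determinants of the $A[i]$ from below uniformly in $i$ \--- is where a genuinely new idea is required, and I expect it to be the crux of any proof.
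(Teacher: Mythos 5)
The statement you were asked to prove is \emph{labelled as a conjecture} in the paper (Conjecture~\ref{con:Approximation_conjecture_for_analytic_L2-torsion}) and is explicitly presented as open. The paper contains no proof of it. The closest result the paper offers is Theorem~\ref{the:comparing_analytic_and_chain_complexes}, which is only a \emph{conditional} reduction: assuming both that $\overline{M}$ is $L^2$-acyclic (i.e.\ $b_p^{(2)}(\overline{M};\caln(G))=0$ for all $p$) and that the combinatorial Conjecture~\ref{con:Approximation_conjecture_for_L2-torsion_of_chain_complexes} holds, the analytic Conjecture~\ref{con:Approximation_conjecture_for_analytic_L2-torsion} follows. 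That is very far from a proof of the conjecture, and the paper is quite explicit about this. Your proposal therefore cannot be, and correctly does not pretend to be, a proof: your final paragraph honestly identifies the uniform-integrability condition of~\cite[Sections~16--17]{Lueck(2016_l2approx)} as the open crux, which matches the discussion surrounding Conjecture~\ref{con:Approximation_conjecture_for_Fuglede-Kadison_determinants_with_arbitrary_index} in the paper. To that extent, your write-up is a sound analysis of the problem.

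Where you overreach is in the second step, in the non-$L^2$-acyclic case. You assert that the Hodge--de~Rham comparison correction terms ``should follow from'' Betti number approximation plus ``a uniform control of the comparison maps.'' The paper's Theorem~\ref{the:comparing_analytic_and_chain_complexes} is proved \emph{only} under the hypothesis $b_p^{(2)}(\overline{M};\caln(G))=0$ for all $p$, and the author explicitly states immediately afterwards that it is merely \emph{conceivable} that this hypothesis can be dropped, citing~\cite[Remark~16.2]{Lueck(2016_l2approx)}. So the control of those correction terms that you take for granted is itself a genuinely unresolved part of the reduction, in addition to and separate from the uniform-integrability issue; no argument or reference is given for it, and ``should follow'' is not justified. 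Stripped of that optimism, your argument reduces the conjecture only in the $L^2$-acyclic case, which is essentially what Theorem~\ref{the:comparing_analytic_and_chain_complexes} already records, and leaves the conjecture exactly where the paper leaves it.
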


\begin{remark}\label{rem:main_theorem_answers_questions}
  The conjectures above imply a positive answer
  to~\cite[Question~21]{Deninger(2009Mahler)} and~\cite[Question~13.52 on page~478 and
  Question~13.73 on page~483]{Lueck(2002)}.  They also would settle~\cite[Problem~4.4 and
  Problem~6.4]{Kitano-Morifuji(2008)} and~\cite[Conjecture~3.5]
  {Kitano-Morifuji-Takasawa(2004surface_bundle)}. One may wonder whether it is related to
  the Volume Conjecture due to Kashaev~\cite{Kashaev(1997)} and H. and
  J. Murakami~\cite[Conjecture~5.1 on page~102]{Murakami-Murakami(2001)}.

\end{remark}

The proof of the following result can be found in~\cite[Section~16]{Lueck(2016_l2approx)}.
It reduces in the weakly acyclic case
Conjecture~\ref{con:Approximation_conjecture_for_analytic_L2-torsion} to
Conjecture~\ref{con:Approximation_conjecture_for_Fuglede-Kadison_determinants_with_arbitrary_index}.

\begin{theorem}\label{the:comparing_analytic_and_chain_complexes}
  Consider a group $G$ together with an exhausting normal inverse system
  $\{G_i \mid i \in I\}$.  Let $\overline{M}$ be a Riemannian manifold without boundary
  that comes with a proper free cocompact isometric $G$-action. Suppose that
  $b_p^{(2)}(\overline{M};\caln(G)) = 0$ for all $p \ge 0$. Assume that the Approximation
  Conjecture for $L^2$-torsion of chain
  complexes~\ref{con:Approximation_conjecture_for_L2-torsion_of_chain_complexes} (or,
  equivalently,
  Conjecture~\ref{con:Approximation_conjecture_for_Fuglede-Kadison_determinants_with_arbitrary_index})
  holds for $G$.

  Then Conjecture~\ref{con:Approximation_conjecture_for_analytic_L2-torsion} holds for
  $M$, i.e., $\overline{M}$ and $\overline{M}[i]$ are of determinant class and
  \[
    \rho^{(2)}_{\an}(\overline{M};\caln(G)) = \lim_{i \in I}
    \;\rho^{(2)}_{\an}(\overline{M}[i];\caln(G/G_i)).
  \]
\end{theorem}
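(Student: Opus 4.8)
The plan is to move everything to the cellular world both at the top of the tower and at every finite stage, invoke there the known comparison between analytic and cellular $L^2$-torsion, and then feed in the standing hypothesis, which is a purely combinatorial statement about the chain complexes $C[i]_*$. First I would fix a smooth triangulation of the closed Riemannian manifold $M = \overline{M}/G$. This makes $C_*(\overline{M};\IQ)$ into a finite based free $\IQ G$-chain complex $C_*$ in the sense of Notation~\ref{not:inverse_system_and_chain_complexes}, and then $C[i]_* = \IQ[G/G_i]\otimes_{\IQ G}C_*$ is the cellular $\IQ[G/G_i]$-chain complex of $\overline{M}[i] = G_i\backslash\overline{M}$. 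Since $b_p^{(2)}(\overline{M};\caln(G)) = 0$ for all $p$, the theorem of Burghelea-Friedlander-Kappeler-McDonald~\cite{Burghelea-Friedlander-Kappeler-McDonald(1996a)} applies at the top of the tower: $\overline{M}$ is of determinant class and
\[
  \rho^{(2)}_{\an}(\overline{M};\caln(G)) = \rho^{(2)}(C_*).
\]
Determinant class of $C_*$, hence of all $C[i]_*$, is part of the hypothesis that the Approximation Conjecture for $L^2$-torsion of chain complexes~\ref{con:Approximation_conjecture_for_L2-torsion_of_chain_complexes} holds.

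Second, I would compare $\rho^{(2)}_{\an}(\overline{M}[i];\caln(G/G_i))$ with $\rho^{(2)}(C[i]_*)$ at each finite stage, now over $\caln(G/G_i)$. The point that makes this nontrivial is that $\overline{M}[i]$ need not be $L^2$-acyclic over $\caln(G/G_i)$: the hypothesis only forces $b_p^{(2)}(\overline{M}[i];\caln(G/G_i))$ to tend to $b_p^{(2)}(\overline{M};\caln(G)) = 0$ along $I$, not to vanish for fixed $i$. The relevant input is therefore the non-acyclic comparison of analytic and cellular $L^2$-torsion (a Cheeger-M\"uller/BFKM-type statement over $\caln(G/G_i)$), which produces an identity
\[
  \rho^{(2)}_{\an}(\overline{M}[i];\caln(G/G_i)) = \rho^{(2)}(C[i]_*) + \tau_i,
\]
where $\tau_i$ is the torsion of the $L^2$-homology $H^{(2)}_*(\overline{M}[i];\caln(G/G_i))$ measured against its two natural Hilbert structures, the harmonic one and the cellular one --- concretely an alternating sum of logarithms of ``regulator'' determinants recording how the integral homology of $\overline{M}[i]$ sits inside harmonic forms, together with a contribution from the small nonzero spectrum of the combinatorial Laplacians of $C[i]_*$. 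At the top of the tower this correction is absent, which is exactly what the first step records via $L^2$-acyclicity.

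By the Approximation Conjecture for $L^2$-torsion of chain complexes --- equivalently, for Fuglede-Kadison determinants --- one has $\rho^{(2)}(C[i]_*)\to\rho^{(2)}(C_*) = \rho^{(2)}_{\an}(\overline{M};\caln(G))$ along $I$. So the theorem reduces to
\[
  \lim_{i\in I}\tau_i = 0.
\]
This is the main obstacle and the heart of Section~16 of~\cite{Lueck(2016_l2approx)}. Knowing that $\dim_{\caln(G/G_i)}H^{(2)}_*(\overline{M}[i];\caln(G/G_i))\to 0$ controls only the rank of the integral homology of $\overline{M}[i]$ on the scale $[G:G_i]$; to bound $\tau_i$ one needs in addition that the spectral density functions of the combinatorial Laplacians of $C[i]_*$ are uniformly thin at zero --- the uniform integrability condition flagged after Conjecture~\ref{con:Approximation_conjecture_for_Fuglede-Kadison_determinants_with_arbitrary_index} --- so that the ``small eigenvalue'' part of each Lebesgue integral $\int_{0+}\ln\lambda\,dF$ cannot escape logarithmically in the limit. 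That uniform integrability is precisely the analytic content packaged into the Fuglede-Kadison approximation conjecture, so assuming that conjecture is what makes $\tau_i\to 0$ available. Once this estimate is secured, the remaining points --- independence of the chosen smooth triangulation and of the background metric, and identification of the Reidemeister-type torsion occurring in the Cheeger-M\"uller/BFKM comparison with $\rho^{(2)}(C[i]_*)$ --- are routine.
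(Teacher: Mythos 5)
Your overall strategy -- triangulate, apply Burghelea--Friedlander--Kappeler--McDonald at the top of the tower where $\overline{M}$ is $L^2$-acyclic, then compare analytic and cellular $L^2$-torsion at each finite stage $i$ and feed in the combinatorial approximation hypothesis -- matches the shape of the proof the survey points to in~\cite[Section~16]{Lueck(2016_l2approx)}, and you correctly locate the crux: the intermediate coverings $\overline{M}[i]$ need not be $L^2$-acyclic over $\caln(G/G_i)$, so the Cheeger--M\"uller/BFKM comparison at stage $i$ only gives $\rho^{(2)}_{\an}(\overline{M}[i];\caln(G/G_i)) = \rho^{(2)}(C[i]_*) + \tau_i$ and one must show $\tau_i \to 0$. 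That is indeed what the survey flags when it emphasizes that the intermediate stages are not assumed to be $L^2$-acyclic.

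However, your treatment of $\tau_i \to 0$ contains a genuine gap. You assert that ``uniform integrability is precisely the analytic content packaged into the Fuglede--Kadison approximation conjecture, so assuming that conjecture is what makes $\tau_i\to 0$ available.'' This is backwards from what the paper says. The uniform integrability condition of~\cite[Theorem~16.3~(v) and Remark~16.13]{Lueck(2016_l2approx)} is presented as part of a \emph{strategy for proving} Conjecture~\ref{con:Approximation_conjecture_for_Fuglede-Kadison_determinants_with_arbitrary_index}; the survey explicitly says ``It would be automatically satisfied if one has a uniform estimate on the spectral density functions'' and calls a uniform spectral gap ``the crudest way to guarantee this condition.'' In other words, uniform integrability is a \emph{sufficient} hypothesis one might try to verify in order to establish the Approximation Conjecture, not a \emph{consequence} of it. Convergence of the alternating sums $\rho^{(2)}(C[i]_*)\to\rho^{(2)}(C_*)$ of logarithms of Fuglede--Kadison determinants does not, by itself, provide uniform control of the spectral density functions of the combinatorial Laplacians near zero, which is what one would need in your argument to dominate the small-eigenvalue contribution to $\tau_i$. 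So the step ``assuming the conjecture gives $\tau_i\to 0$'' is not justified as written, and filling this in is exactly the analytical work carried out in~\cite[Section~16]{Lueck(2016_l2approx)} (and is also the reason the survey remarks that the hypothesis of $L^2$-acyclicity at the top cannot yet be dropped). Without that argument, your sketch records the right reduction but does not prove the theorem.
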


Note that in Theorem~\ref{the:comparing_analytic_and_chain_complexes} we are not assuming
that $b_p^{(2)}(\overline{M}[i];\caln(G/G_i))$ vanishes for all $p \ge 0$ and $i \in I$.

It is conceivable that Theorem~\ref{the:comparing_analytic_and_chain_complexes} remains 
true if we drop the assumption that $b_p^{(2)}(\overline{M};\caln(G))$ vanishes for all
$p \ge 0$, but our present proof works only under this assumption,
see~\cite[Remark~16.2]{Lueck(2016_l2approx)}.

More information about the conjectures above can be found in~\cite[Section~15 --
17]{Lueck(2016_l2approx)}.


\subsection{Homological growth and $L^2$-torsion}%
\label{subsec:Homological_growth_and_L2_torsion}

Denote by $H_n(X;\IZ)$ the singular homology with integer coefficients.  If $X$ is a
compact manifold or a finite $CW$-complex, then $H_n(X;\IZ)$ is a finitely generated group
and hence its torsion part $\tors(H_n(X;\IZ))$ is a finite abelian group.

\begin{conjecture}[Homological growth and $L^2$-torsion for aspherical manifolds]%
\label{con:Homological_growth_and_L2-torsion_for_aspherical_manifolds}
  Let $M$ be an aspherical closed manifold of dimension $d$ with fundamental group
  $G = \pi_1(M)$.  Consider a nested sequence
  $G = G_0 \supseteq G_1 \supseteq G_2 \supseteq \cdots $ of normal subgroups of $G$ of
  finite index $[G:G_i]$ satisfying $\bigcap_{i = 0}^{\infty} G_i = \{1\}$. Let
  $M[i] = \widetilde{M} /G_i \to M$ be the $[G:G_i]$-sheeted covering of $M$ associated to
  $G_i \subseteq G$.
 
  Then we get for any natural number $n$ with $2n +1 \not= d$
  \[
    \lim_{i \to \infty} \;\frac{\ln\big(\bigl|\tors(H_n(M[i];\IZ))\bigr|\bigr)}{[G:G_i]} =
    0,
  \]
  and we get in the case $d = 2n+1$
  \[
    \lim_{i \to \infty} \;\frac{\ln\big(\bigl|\tors(H_n(M[i];\IZ))\bigr|\bigr)}{[G:G_i]} =
    (-1)^n \cdot \rho^{(2)}_{\an}(\widetilde{M}) \ge 0.
  \]
\end{conjecture}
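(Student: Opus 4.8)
The statement in Conjecture~\ref{con:Homological_growth_and_L2-torsion_for_aspherical_manifolds} is one of the central open problems of the subject, so what follows is a strategy rather than a proof. The plan is to feed three inputs into an integral refinement of the approximation philosophy of Subsection~\ref{subsec:The_Approximation_Conjecture_for_Fuglede-Kadison_determinants_and_L2-torsion}: the Singer Conjecture~\ref{con:Singer_Conjecture} together with the Determinant Conjecture~\ref{con:Determinant_Conjecture}, which control the $L^2$-side; Milnor's classical formula expressing the real Reidemeister torsion of a based integral chain complex in terms of the orders of the torsion subgroups of its homology and the covolumes of its free quotients; and the Approximation Conjecture for $L^2$-torsion of chain complexes~\ref{con:Approximation_conjecture_for_L2-torsion_of_chain_complexes}, applied to the cellular $\IR$-chain complexes of the tower $\{M[i]\}$.

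First I would pin down the $L^2$-side. Write $G = \pi_1(M)$; it satisfies the Determinant Conjecture~\ref{con:Determinant_Conjecture} (known for $3$-manifold groups, and in general part of the hypothesis). When $d = 2n+1$ is odd we have $2p \neq d$ for all $p$, so the Singer Conjecture~\ref{con:Singer_Conjecture} forces $b_p^{(2)}(\widetilde M) = 0$ for all $p$; by property~\ref{list:main_properties_of_rho2(widetildeX):sofic_det-class} the covering $\widetilde M$ is then $\det$-$L^2$-acyclic, and by Burghelea--Friedlander--Kappeler--McDonald~\cite{Burghelea-Friedlander-Kappeler-McDonald(1996a)} the analytic $L^2$-torsion $\rho^{(2)}_{\an}(\widetilde M)$ equals the cellular $L^2$-torsion $\rho^{(2)}(\widetilde M)$ of~\eqref{rho_upper_(2)(overline(X))}. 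Now at each finite level endow the cellular $\IR$-chain complex $C_*(M[i];\IR)$ with the cellular basis and each $H_p(M[i];\IR)$ with the $L^2$ (harmonic-representative) inner product; Milnor's formula then gives an identity of the schematic shape
\[
  \rho_{\IR}(M[i]) \;=\; \sum_{p \geq 0} (-1)^p \log\bigl|\tors H_p(M[i];\IZ)\bigr| \;-\; \sum_{p \geq 0} (-1)^p \log R_p(M[i]),
\]
where $\rho_{\IR}(M[i])$ is the resulting $\IR$-Reidemeister torsion and $R_p(M[i])$ is the covolume of the lattice $H_p(M[i];\IZ)/\tors$ in $H_p(M[i];\IR)$ for that inner product. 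Since $G/G_i$ is finite, unwinding Example~\ref{exa:det_for_finite_groups} shows that the $L^2$-torsion $\rho^{(2)}(C[i]_*;\caln(G/G_i))$ of Notation~\ref{not:inverse_system_and_chain_complexes} is nothing but $\rho_{\IR}(M[i])/[G:G_i]$.

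Next I would take the limit. Dividing the displayed identity by $[G:G_i]$ and letting $i \to \infty$, the Approximation Conjecture for $L^2$-torsion of chain complexes~\ref{con:Approximation_conjecture_for_L2-torsion_of_chain_complexes} makes $\rho_{\IR}(M[i])/[G:G_i] = \rho^{(2)}(C[i]_*;\caln(G/G_i))$ converge to $\rho^{(2)}(\widetilde M) = \rho^{(2)}_{\an}(\widetilde M)$ (in the $L^2$-acyclic case this is the clean situation covered by Theorem~\ref{the:comparing_analytic_and_chain_complexes} and its equivalent Conjecture~\ref{con:Approximation_conjecture_for_Fuglede-Kadison_determinants_with_arbitrary_index}), while Theorem~\ref{the:The_Determinant_Conjecture_implies_the_Approximation_Conjecture_for_L2-Betti_numbers} gives $b_p(M[i])/[G:G_i] \to 0$, so the rational homology of the tower grows sublinearly. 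If in addition one knows that (a) $\log R_p(M[i]) = o\bigl([G:G_i]\bigr)$ for every $p$, and (b) $\log\bigl|\tors H_p(M[i];\IZ)\bigr| = o\bigl([G:G_i]\bigr)$ for every $p \neq n$ — by Poincar\'e duality and the universal coefficient theorem $\tors H_p(M[i];\IZ) \cong \tors H_{d-p-1}(M[i];\IZ)$, so the only possibly large torsion group sits in the self-dual degree $p = n$ — then the displayed identity isolates the degree-$n$ contribution and yields $\log\bigl|\tors H_n(M[i];\IZ)\bigr|/[G:G_i] \to (-1)^n \rho^{(2)}_{\an}(\widetilde M)$, and the limit $0$ in every other degree; non-negativity of the limit is automatic since it is a limit of non-negative numbers. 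For $d$ even one has $\rho^{(2)}(\widetilde M) = 0$ by Poincar\'e duality~\ref{list:main_properties_of_rho2(widetildeX):Poincare_duality}, and the same bookkeeping — now with the middle degree $d/2$ carrying the large-rank rational homology — should produce the predicted limit $0$ for all $n$ (note $2n+1 \neq d$ automatically in this case).

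The hard part is everything outside the $L^2$-formalism, and two obstacles dominate. First, the Approximation Conjecture for $L^2$-torsion of chain complexes~\ref{con:Approximation_conjecture_for_L2-torsion_of_chain_complexes} — equivalently for Fuglede--Kadison determinants~\ref{con:Approximation_conjecture_for_Fuglede-Kadison_determinants_with_arbitrary_index} — is itself wide open: passing $\ln\det_{\caln(G/G_i)}$ to the limit requires uniform control of how thin the spectra of the intermediate Laplacians are near $0$, the uniform integrability condition of~\cite[Section~16]{Lueck(2016_l2approx)}, and no such estimate is known in this generality. Second, and I expect the decisive difficulty, is the combination of the regulator problem and the concentration statement: one must show that the covolumes $R_p(M[i])$ grow subexponentially even though the ranks $b_p(M[i])$ are only known to grow sublinearly, and simultaneously that the torsion homology really does concentrate in the near-middle degree. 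For a general aspherical manifold there is no arithmetic structure to exploit, and even in the arithmetic case these are precisely the delicate points in the Bergeron--Venkatesh circle of conjectures, proved only in special situations. It is because these integral refinements are so resistant that the reformulation in terms of the profinite completion in Section~\ref{sec:Profinite_completion_of_the_fundamental_group_of_a_3-manifold} becomes attractive.
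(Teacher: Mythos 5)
The statement you were asked to prove is labeled as a \emph{conjecture} in the paper, and the paper offers no proof: Subsection~\ref{subsec:Homological_growth_and_L2_torsion} explicitly records that the conjecture is open and that, to the author's knowledge, it is not known for even a single hyperbolic $3$-manifold. You were right to flag this and to present a strategy rather than claim a proof.

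Your strategy is the standard one in the literature and matches what the paper itself points at: the reduction of the problem to an integral strengthening of the approximation conjectures (Conjectures~\ref{con:Approximation_conjecture_for_Fuglede-Kadison_determinants_with_arbitrary_index} and~\ref{con:Approximation_conjecture_for_L2-torsion_of_chain_complexes}), the Milnor-type identity splitting finite-level Reidemeister torsion into a torsion-homology part and a regulator part, and the use of Poincar\'e duality to concentrate torsion homology in the self-dual degree $p=n$ when $d=2n+1$. You also correctly identify the two genuine obstacles: (i) the approximation conjecture for Fuglede--Kadison determinants over $\IQ G$ is itself wide open, so the convergence $\rho^{(2)}(C[i]_*)/[G:G_i]\to\rho^{(2)}(\widetilde M)$ is conjectural, and (ii) even granting it, one still needs the regulator terms $\ln R_p(M[i])$ to be $o([G:G_i])$ and the off-middle torsion to grow subexponentially, and these are exactly the points where the Bergeron--Venkatesh program gets stuck outside arithmetic settings.

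Two small cautionary remarks so the write-up isn't misleading. First, the identity $\rho^{(2)}(C[i]_*;\caln(G/G_i))=\rho_{\IR}(M[i])/[G:G_i]$ is only correct once the Reidemeister torsion $\rho_{\IR}(M[i])$ is defined with the $L^2$ (harmonic) inner product on the homology groups \emph{and} the cellular basis; this is exactly the convention needed to make the FK-determinant of the differential restricted to the orthogonal complement of its kernel match the classical determinant in Example~\ref{exa:det_for_finite_groups}. State this explicitly, otherwise the Milnor formula and the $L^2$-torsion live on different normalisations. Second, for $d$ even the Singer Conjecture does \emph{not} predict $b^{(2)}_{d/2}(\widetilde M)=0$, so $\widetilde M$ is not $\det$-$L^2$-acyclic and the cellular $L^2$-torsion of~\eqref{rho_upper_(2)(overline(X))} is simply not defined; one has to work with the analytic $\rho^{(2)}_{\an}$ throughout, and Poincar\'e duality alone does not immediately force each degree-$n$ torsion to be $o([G:G_i])$ — it only pairs $\tors H_p$ with $\tors H_{d-1-p}$, which for $d$ even never coincide, so some additional input (beyond ``the same bookkeeping'') is needed to rule out mutually cancelling exponential growth in paired degrees.
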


Recall that $\rho^{(2)}_{\an}(\widetilde{M}) = \rho^{(2)}(\widetilde{M})$ holds, if
$\widetilde{M}$ is $L^2$-acyclic and that the Singer Conjecture implies for an aspherical
closed manifold of odd dimension that $\widetilde{M}$ is $L^2$-acyclic.  Moreover, since
$G$ appearing in
Conjecture~\ref{con:Homological_growth_and_L2-torsion_for_aspherical_manifolds} is
residually finite, the condition of determinant class is automatically satisfied for
$\widetilde{M}$.

One may wonder what happens if we replace $M$ by an aspherical finite Poincar\'e complex
in Conjecture~\ref{con:Homological_growth_and_L2-torsion_for_aspherical_manifolds}.

Conjecture~\ref{con:Homological_growth_and_L2-torsion_for_aspherical_manifolds} is known
to be true in the case that $G$ contains a normal infinite elementary amenable subgroup or admits 
a non-trivial $S^1$-action, see~\cite{Lueck(2013l2approxfib)}. However, to the author's
knowledge there is no hyperbolic $3$-manifold for which
Conjecture~\ref{con:Homological_growth_and_L2-torsion_for_aspherical_manifolds} is known
to be true.

Conjecture~\ref{con:Homological_growth_and_L2-torsion_for_aspherical_manifolds} is
attributed to Bergeron-Venkatesh~\cite{Bergeron-Venkatesh(2013)}. They allow only locally
symmetric spaces for $M$.  They also consider the case of  twisting with a
finite-dimensional integral representation.  Further discussions about this conjecture can
be found for instance
in~\cite[Section~7.5.1]{Aschenbrenner-Friedl-Wilton(2015)},~\cite{Bergeron-Sengun-Venkatesh(2016)},
and~\cite{Brock-Dunfield(2015)}.

The relation between
Conjecture~\ref{con:Approximation_conjecture_for_Fuglede-Kadison_determinants_with_arbitrary_index}
and Conjecture~\ref{con:Homological_growth_and_L2-torsion_for_aspherical_manifolds} is
discussed in~\cite[Section~9 and~10]{Lueck(2016_l2approx)}.

The chain complex version
Conjecture~\ref{con:Homological_growth_and_L2-torsion_for_aspherical_manifolds} is stated
in~\cite[Conjeture~7.12]{Lueck(2016_l2approx)}. We at least explain what it says for
$1$-dimensional chain complexes, or, equivalently, matrices. Here it is important to work
over the integral group ring.

\begin{conjecture}[Approximating Fuglede-Kadison determinants by homology]%
\label{con:Approximating_Fuglede-Kadison_determinants_by_homology}\ Consider a nested
  sequence $G = G_0 \supseteq G_1 \supseteq G_2 \supseteq \cdots $ of normal subgroups of
  $G$ of finite index $[G:G_i]$ satisfying $\bigcap_{i = 0}^{\infty} G_i = \{1\}$.
  Consider $A \in M_{r,r}(\IZ G)$.

  Then we get using Notation~\ref{not:inverse_systems_and_matrices} for $R = \IZ$
  \begin{eqnarray*}
    {\det}_{\caln(G)}^{(2)}(r_A^{(2)})
    & = & 
          \lim_{i \to \infty} \; \bigl|\tors(\coker(r_{A[i]}))\bigr|^{1/[G:G_i]}.
  \end{eqnarray*}
\end{conjecture}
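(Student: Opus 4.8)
The strategy would be to pass through the finite quotients $G/G_i$, comparing at each finite stage the integer $|\tors(\coker(r_{A[i]}))|$ with the Fuglede--Kadison determinant ${\det}_{\caln(G/G_i)}(r_{A[i]}^{(2)})$, and only then letting $i\to\infty$. For the finite stages, fix $i$, write $H=G/G_i$ and $B=A[i]\in M_{r,r}(\IZ H)$, and identify $\IZ H^r$ with the unimodular lattice $\IZ^{N}\subset L^2(H)^r=\IC^{N}$, $N=r\cdot[G:G_i]$, so that $r_B^{(2)}$ is the complexification of $r_B\colon\IZ^{N}\to\IZ^{N}$. Example~\ref{exa:det_for_finite_groups} gives ${\det}_{\caln(H)}(r_B^{(2)})^{[G:G_i]}=\prod_j\sigma_j$, where the $\sigma_j$ are the nonzero singular values of $r_B$. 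An elementary lattice-geometry (Gram-determinant) computation then shows that $\prod_j\sigma_j$ equals $|\tors(\coker(r_B))|$ times the product of the covolumes of the primitive integral lattices $\ker(r_B)\subset\IZ^{N}$ and $\IZ^{N}\cap\im(r_B\otimes\IR)$ — a correction factor that is $1$ exactly when $r_B$ is rationally injective. Since primitive sublattices of $\IZ^{N}$ have covolume $\ge 1$, this already yields $|\tors(\coker(r_{A[i]}))|^{1/[G:G_i]}\le{\det}_{\caln(G/G_i)}(r_{A[i]}^{(2)})$ for every $i$.

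Combining this with the ``easy half'' of the approximation of Fuglede--Kadison determinants gives one inequality in the limit. Concretely, the spectral measures $\mu_i$ of $r_{A[i]}^{(2)*}r_{A[i]}^{(2)}$ converge weakly to the spectral measure $\mu$ of $r_{A}^{(2)*}r_{A}^{(2)}$ by the method of moments: each $\tr_{\caln(G/G_i)}\bigl((r_{A[i]}^{(2)*}r_{A[i]}^{(2)})^k\bigr)$ is a fixed finite linear combination of numbers $\tr_{\caln(G/G_i)}(\overline g)$ (equal to $1$ if $g\in G_i$ and $0$ otherwise), and $\bigcap_i G_i=\{1\}$ forces $\tr_{\caln(G/G_i)}(\overline g)\to\tr_{\caln(G)}(g)$; moreover all $\mu_i$ are supported in a fixed interval $[0,c^2]$, where $c$ is the $\ell^1$-norm of the entries of $A$. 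Because $\ln\lambda\le 0$ for $\lambda\le 1$, the part of the mass of $\mu_i$ near $0$ can only decrease $\ln{\det}$, so weak convergence of the $\mu_i$ on the intervals $(\varepsilon,c^2]$ with $\varepsilon\downarrow 0$ yields $\limsup_i{\det}_{\caln(G/G_i)}(r_{A[i]}^{(2)})\le{\det}_{\caln(G)}(r_{A}^{(2)})$. Together with the first paragraph this proves the ``$\le$'' half, $\limsup_i|\tors(\coker(r_{A[i]}))|^{1/[G:G_i]}\le{\det}_{\caln(G)}(r_{A}^{(2)})$.

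The reverse inequality $\liminf_i|\tors(\coker(r_{A[i]}))|^{1/[G:G_i]}\ge{\det}_{\caln(G)}(r_{A}^{(2)})$ is the real content, and by the first paragraph it splits into two problems, which I expect to be the main obstacle. The first is the reverse of the estimate above, $\liminf_i{\det}_{\caln(G/G_i)}(r_{A[i]}^{(2)})\ge{\det}_{\caln(G)}(r_{A}^{(2)})$, i.e. Conjecture~\ref{con:Approximation_conjecture_for_Fuglede-Kadison_determinants_with_arbitrary_index} for $\IZ$-coefficients; this requires ruling out an escape of mass of the $\mu_i$ towards $0$ — a uniform thinness-at-zero/uniform-integrability condition (see the discussion after Conjecture~\ref{con:Approximation_conjecture_for_Fuglede-Kadison_determinants_with_arbitrary_index}), whose failure for $\IC$-coefficients, Remark~\ref{rem:IQ-coefficients_are_necessary}, shows that working over $\IZ$ is essential. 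The second is to show that the lattice correction factor of the first paragraph has $[G:G_i]$-th root tending to $1$, i.e. that the covolumes of $\ker(r_{A[i]})$ and $\IZ^{N}\cap\im(r_{A[i]}\otimes\IR)$ grow subexponentially in $[G:G_i]$. Both points are known when $G$ is amenable, where they are subsumed in the entropy and $L^2$-torsion theory built from a F\o lner exhaustion (Li--Thom), and also under a uniform spectral gap at $0$ for the $r_{A[i]}^{(2)}$; but for groups such as fundamental groups of hyperbolic $3$-manifolds no approach is known, which is precisely why the statement is only conjectural. A natural line of attack would feed the determinant-approximation machinery of~\cite[Sections~15--17]{Lueck(2016_l2approx)} with an arithmetic bound on $\tors(\coker(r_{A[i]}))$ obtained prime by prime, but producing an estimate on the small singular values that is uniform across the whole tower is exactly the open problem.
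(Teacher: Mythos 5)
This statement is one of the paper's \emph{open conjectures}, not a theorem: the text immediately following it says explicitly that the only infinite group for which Conjecture~\ref{con:Approximating_Fuglede-Kadison_determinants_by_homology} is known is $G=\IZ$, that no sufficient condition analogous to the uniform integrability criterion for Conjecture~\ref{con:Approximation_conjecture_for_Fuglede-Kadison_determinants_with_arbitrary_index} is known, and that deep number theory is expected to enter. There is therefore no proof in the paper to compare against, and a ``proof'' claiming to close it in full generality would have been a red flag.

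You have handled this correctly: rather than purporting to prove the statement, you have sorted out what is actually known, and your account is accurate and consistent with the paper's remarks. The finite-stage lattice computation is right \--- for finite $H$ and $B\in M_{r,r}(\IZ H)$, Example~\ref{exa:det_for_finite_groups} gives $\det_{\caln(H)}(r_B^{(2)})^{|H|}=\prod_j\sigma_j$, and the Gram-determinant bookkeeping indeed produces $\prod_j\sigma_j=|\tors(\coker r_B)|\cdot\operatorname{covol}(\ker)\cdot\operatorname{covol}(\text{saturated image})$, with both covolumes $\ge 1$ because for a primitive sublattice $M\subset\IZ^N$ the orthogonal projection of $\IZ^N$ onto $M_\IR$ has covolume $1/\operatorname{covol}(M)$ and contains $M$. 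Your ``method of moments'' derivation of $\limsup_i\det_{\caln(G/G_i)}(r_{A[i]}^{(2)})\le\det_{\caln(G)}(r_A^{(2)})$ is the standard easy half of determinant approximation (no Determinant Conjecture needed for this direction), and combining the two gives the one provable inequality. Your identification of the two remaining obstructions \--- (i) the reverse determinant-approximation inequality, i.e.\ Conjecture~\ref{con:Approximation_conjecture_for_Fuglede-Kadison_determinants_with_arbitrary_index}, and (ii) subexponential growth of the lattice correction factor \--- and your observation that both are settled in the amenable and uniform-spectral-gap cases but wide open for, e.g., hyperbolic $3$-manifold groups, matches the paper's discussion. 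One small imprecision: you cite Remark~\ref{rem:IQ-coefficients_are_necessary} to conclude ``working over $\IZ$ is essential,'' but that remark only shows $\IC$ fails where $\IQ$ might still work; the paper's separate remark that Conjecture~\ref{con:Approximating_Fuglede-Kadison_determinants_by_homology} genuinely needs $\IZ$ (not just $\IQ$) comes from the torsion subgroup $\tors(\coker(r_{A[i]}))$ only being defined over $\IZ$, and from the lattice covolume argument you already gave.
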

  
Recall that for
Conjecture~\ref{con:Approximation_conjecture_for_Fuglede-Kadison_determinants_with_arbitrary_index}
we could formulate a good condition, namely the uniform integrability condition,
which implies its validity.  Nothing like this is known for
Conjecture~\ref{con:Approximating_Fuglede-Kadison_determinants_by_homology}.  The only
infinite group for which
Conjecture~\ref{con:Approximating_Fuglede-Kadison_determinants_by_homology} is known to be
true is $\IZ$. The proof indicates that some deep number theory may enter in a potential
proof of Conjecture~\ref{con:Approximating_Fuglede-Kadison_determinants_by_homology}.
Note that for Conjecture~\ref{con:Approximating_Fuglede-Kadison_determinants_by_homology}
it is crucial that the matrix $A$ lives over the integral group ring, whereas for
Conjecture~\ref{con:Approximation_conjecture_for_Fuglede-Kadison_determinants_with_arbitrary_index}
it suffices that $A$ lives over the rational group ring, see
Remark~\ref{rem:IQ-coefficients_are_necessary}.


\subsection{$L^2$-invariants and the simplicial volume}%
\label{subsec:L2-invariant_and_the_simplicial_volume}

We briefly recall the definition of the \emph{simplicial volume}.

Let $X$ be a topological space and let $C_*^{\sing}(X;\IR)$ be its singular chain complex
with real coefficients. Recall that a singular $p$-simplex of $X$ is a continuous map
$\sigma\colon \Delta_p \to X$, where here $\Delta_p$ denotes the standard $p$-simplex (and
not the Laplace operator). Let $S_p(X)$ be the set of all singular $p$-simplices.  Then
$C_p^{\sing}(X;\IR)$ is the real vector space with $S_p(X)$ as basis. The $p$-th
differential $\partial_p$ sends the element $\sigma $ given by a $p$-simplex
$\sigma\colon \Delta_p \to X$ to $\sum_{i = 0}^p (-1)^{i} \cdot \sigma \circ s_i$, where
$s_i\colon \Delta_{p-1} \to \Delta_p$ is the $i$-th face map. Define the
\emph{$L^1$-norm}%
\index{L1-norm of a singular chain@$L^1$-norm of a singular chain} of an element
$x \in C_p^{\sing}(X;\IR)$, which is given by the (finite) sum
$\sum_{\sigma \in S_p(X)} \lambda_{\sigma} \cdot \sigma$, by
\begin{eqnarray*}
  ||x||_1 & := & \sum_{\sigma} |\lambda_{\sigma}|.
\end{eqnarray*}

We define the \emph{$L^1$-seminorm}%
\index{L1-seminorm of a homology class@$L^1$-seminorm of a homology class} of an element
$y$ in the $p$-th singular homology $H_p^{\sing}(X;\IR) := H_p(C_*^{\sing}(X;\IR))$ by
\begin{eqnarray*}
  ||y||_1
  & := & \inf\{||x||_1 \mid
         x \in C_p^{\sing}(X;\IR), \partial_p(x) = 0, y = [x]\}.
\end{eqnarray*}
Notice that $||y||_1$ defines only a seminorm on $H_p^{\sing}(X;\IR)$, it is possible that
$||y||_1 = 0$ but $y \not= 0$. The next definition is taken
from~\cite[page~8]{Gromov(1982)}.

\begin{definition}[Simplicial volume]\label{def:simplicial_volume_of_an_orientable_closed_manifold}
  Let $M$ be a closed connected orientable manifold of dimension $n$. Define its
  \emph{simplicial volume} to be the non-negative real number
  \[
    ||M|| := ||j([M])||_1 \; \in \IR^{\ge 0}
  \]
  for any choice of fundamental class $[M] \in H_n^{\sing}(M;\IZ)$ and
  $j\colon H_n^{\sing}(M;\IZ) \to H_n^{\sing}(M;\IR)$ the change of coefficients map
  associated to the inclusion $\IZ \to \IR$.
\end{definition}

There is the following interesting but poorly understood conjecture relating the
simplicial volume and $L^2$-invariants for aspherical orientable closed manifolds,
see~\cite[Chapter~14.1]{Lueck(2002)}.

\begin{conjecture}[Simplicial volume and $L^2$-invariants]\label{con:simplicial_volume_and_L2-invariants}
  Let $M$ be an aspherical closed orientable manifold of dimension $\ge 1$.  Suppose that
  its simplicial volume $||M||$ vanishes. Then $\widetilde{M}$ is of determinant class and
  \begin{eqnarray*}
    b_p^{(2)}(\widetilde{M}) & = & 0 \hspace{5mm} \mbox{ for } p \ge 0;
    \\
    \rho^{(2)}(\widetilde{M}) & = & 0.
  \end{eqnarray*}
\end{conjecture}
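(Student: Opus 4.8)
The assertion is a well-known open conjecture, so what follows is a strategy, not an argument. The plan is to route the $L^2$-Betti-number half through the \emph{integral foliated simplicial volume} $|M|_{\calf}$ of Gromov and Schmidt. This measure-theoretic refinement of the simplicial volume satisfies $\|M\| \le |M|_{\calf}$ for every closed oriented manifold, and it is known that $|M|_{\calf} = 0$ forces $b_p^{(2)}(\widetilde{M}) = 0$ for all $p$ (Schmidt, Sauer, and Frigerio--L\"oh--Pagliantini--Sauer). Hence the first and decisive task is to establish, for aspherical $M$, the implication $\|M\| = 0 \Rightarrow |M|_{\calf} = 0$, i.e.\ Gromov's conjectured equality $\|M\| = |M|_{\calf}$ in the aspherical range.

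To carry this out I would first try to extract from $\|M\| = 0$, using asphericity, an amenable open cover $U_1, \ldots, U_k$ of $M$ of multiplicity at most $\dim M$ --- equivalently, the bound $\operatorname{cat}_{\mathrm{Am}}(M) \le \dim M$ on the amenable category. One then parametrises this cover: over a suitable standard Borel probability $\pi_1(M)$-space $S$ (for instance a Poisson boundary) one builds a fundamental cycle in $C_*(\widetilde{M}) \otimes_{\IZ\pi_1(M)} L^\infty(S;\IZ)$ subordinate to the cover, and uses amenability of each image $\operatorname{im}\bigl(\pi_1(U_i) \to \pi_1(M)\bigr)$, through an invariant mean, to diffuse the contribution of each chart so that the $L^1$-norm of the cycle falls below any prescribed $\varepsilon > 0$. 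This parametrised version of Gromov's diffusion-of-cycles technique is the technical heart; once it succeeds one has $|M|_{\calf} = 0$ and therefore $b_p^{(2)}(\widetilde{M}) = 0$ for every $p$.

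For the remaining claims --- that $\widetilde{M}$ is of determinant class and that $\rho^{(2)}(\widetilde{M}) = 0$ --- I would aim to upgrade a parametrised fundamental cycle of vanishing norm into quantitative control of Fuglede--Kadison determinants: convert such a cycle into an \emph{efficient} finite free resolution of the trivial module over the crossed product $L^\infty(S) \rtimes \pi_1(M)$, or a chain homotopy equivalence replacing $C_*^{(2)}(\widetilde{M})$ by a complex whose differentials have operator norm and combinatorial size bounded by $\varepsilon$, and bound $\ln {\det}_{\caln(\pi_1(M))}$ of each differential, from above and from below, by a function of that size. Letting $\varepsilon \to 0$ would then pin every such determinant at $1$, which simultaneously gives the determinant-class property and, by the defining formula $\rho^{(2)}(\widetilde{M}) = -\sum_{p \ge 0} (-1)^p \cdot \ln\bigl({\det}_{\caln(\pi_1(M))}(c_p^{(2)})\bigr)$, the vanishing of $\rho^{(2)}(\widetilde{M})$. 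No estimate of this kind is presently known; a conceivable substitute passes through homological torsion growth along finite coverings together with L\"uck-type approximation, but that requires $\pi_1(M)$ to be residually finite and, even then, merely exchanges this conjecture for the (open) homological growth conjecture.

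The main obstacle is the first step: promoting $\|M\| = 0$ to $|M|_{\calf} = 0$ for aspherical $M$, which is precisely Gromov's long-standing conjecture and is open beyond the special cases already recorded in this survey --- an infinite elementary amenable normal subgroup in $\pi_1(M)$, or a non-trivial smooth $S^1$-action --- where all three conclusions follow by entirely different arguments. The Fuglede--Kadison determinant estimate required for the $L^2$-torsion statement is a second, essentially untouched, difficulty, and it is this part that makes the conjecture strictly deeper than its $L^2$-Betti-number half.
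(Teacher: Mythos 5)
This statement is labeled a \emph{Conjecture} in the paper, and the paragraph immediately following it states explicitly that ``no strategy for a potential proof is known to the author,'' so there is no proof in the paper for your proposal to be compared against. You have correctly and honestly framed your write-up as a strategy rather than an argument. The route you sketch --- pass from $\|M\|$ to the integral foliated simplicial volume $|M|_{\calf}$, use the known inequality $b_p^{(2)}(\widetilde{M}) \le |M|_{\calf}$ of Schmidt to kill the $L^2$-Betti numbers, and then try to upgrade small parametrised fundamental cycles into Fuglede--Kadison determinant estimates --- is the standard circle of ideas around this problem, and you are right that the decisive missing ingredient is the implication $\|M\| = 0 \Rightarrow |M|_{\calf} = 0$ for aspherical $M$, and right again that the determinant/torsion half is strictly harder because no foliated analogue of the Betti-number inequality exists.

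One intermediate step you propose is itself a dead end worth flagging. You suggest first extracting from $\|M\| = 0$, via asphericity, an amenable open cover of multiplicity at most $\dim M$, i.e.\ a bound on the amenable category. Gromov's Vanishing Theorem runs in the opposite direction --- an amenable cover of small multiplicity forces $\|M\| = 0$ --- and the converse implication is not known and is in fact false without further hypotheses (one can have $\|M\| = 0$ with no amenable cover of the required multiplicity unless $M$ is aspherical, and even in the aspherical case the converse is an open question of its own). The parametrised diffusion-of-cycles argument you then outline is essentially the known proof that a small amenable cover yields $|M|_{\calf} = 0$, so you have not proposed a softer stepping stone but rather inserted a second open problem inside the one you already correctly identified. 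Finally, your observation that the torsion half might instead be traded, via L\"uck-type approximation over finite covers of a residually finite fundamental group, for the homological growth statement is accurate, but this only exchanges the present conjecture for Conjecture~\ref{con:Homological_growth_and_L2-torsion_for_aspherical_manifolds}, which is equally open. Given that the statement is a genuine open conjecture with no known proof strategy, this is the most one can say about any proposal for it.
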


For more information about this conjecture we refer for instance
to~\cite{Frigerio-Loeh-Pagliantini_Sauer(2016)},~\cite[Chapter~14]{Lueck(2002)},~\cite{Schmidt(2005)}.
It has been verified by computations if $M$ is a locally symmetric space, if $M$ is
$3$-manifold, if $M$ carries a non-trivial $S^1$-action, or $\pi_1(M)$ is elementary
amenable.  But no strategy for a potential proof is known to the author.


\typeout{---- Section 4: The computation of $L^2$-Betti numbers and $L^2$-torsion of
  $3$-manifolds -----}

\section{The computation of $L^2$-Betti numbers and $L^2$-torsion of $3$-manifolds}%
\label{sec:The_computation_of_L2-Betti_numbers_and_L2-torsion_of_3-manifolds}


\subsection{$L^2$-Betti numbers of $3$-manifolds}%
\label{subsec:L2-Betti_numbers_of_3-manifolds}

The following theorem is taken from~\cite[Theorem~0.1]{Lott-Lueck(1995)}.

\begin{theorem}[$L^2$-Betti numbers of $3$-manifolds]\label{the:L2-Betti_numbers_of_3-manifolds}
  Let $M$ be the connected sum \mbox{$M_1 \# \ldots \# M_r$} of compact connected
  orientable prime $3$-manifolds $M_j$.  Assume that $\pi_1(M)$ is infinite.  Then the
  $L^2$-Betti numbers of the universal covering $\widetilde{M}$ are given by
  \begin{eqnarray}
    b_0^{(2)}(\widetilde{M}) & = & 0; \nonumber
    \\
    b_1^{(2)}(\widetilde{M}) & = & (r-1) -
                                   \sum_{j=1}^r \frac{1}{\mid \pi_1(M_j)\mid}\; +
                                   \left|\{C \in \pi_0(\partial M) \mid C \cong S^2\}\right|  - \chi(M); \nonumber
    \\
    b_2^{(2)}(\widetilde{M}) & = & (r-1) -
                                   \sum_{j=1}^r \frac{1}{\mid \pi_1(M_j)\mid} \;+
                                   \left|\{C \in \pi_0(\partial M) \mid C \cong S^2\}\right|; \nonumber
    \\
    b_3^{(2)}(\widetilde{M}) & = & 0. \nonumber
  \end{eqnarray}
  In particular, $\widetilde{M}$ is $L^2$-acyclic if and only if $M$ is homotopy
  equivalent, or, equivalently, homeomorphic, to $\IR\IP^3 \# \IR\IP^3$ or a prime 3-manifold with infinite fundamental group whose
  boundary is empty or a union of tori.
\end{theorem}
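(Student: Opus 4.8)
The plan is to reduce the whole statement to the single formula for $b_2^{(2)}(\widetilde M)$, and then to a computation on each prime factor. First I would clear away the outer degrees. Since $\pi_1(M)$ is infinite, the computation of the $0$th $L^2$-Betti number gives $b_0^{(2)}(\widetilde M)=0$; and $b_p^{(2)}(\widetilde M)=0$ for $p\ge 3$, because if $\partial M\ne\emptyset$ then $M$ is homotopy equivalent to a $2$-dimensional $CW$-complex, while if $\partial M=\emptyset$ Poincaré duality gives $b_3^{(2)}(\widetilde M)=b_0^{(2)}(\widetilde M)=0$. The Euler--Poincaré formula then says $\chi(M)=-b_1^{(2)}(\widetilde M)+b_2^{(2)}(\widetilde M)$, i.e.\ $b_1^{(2)}(\widetilde M)=b_2^{(2)}(\widetilde M)-\chi(M)$, which is exactly the relation between the two middle formulas; so it suffices to prove the formula for $b_2^{(2)}(\widetilde M)$.

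For this I would run an $L^2$-Mayer--Vietoris argument adapted to the connected sum. Realise $M$ as $P\cup\bigl(\coprod_{j=1}^r M_j^{\circ}\bigr)$, where $M_j^{\circ}$ is $M_j$ with an open $3$-ball removed, $P$ is a simply connected piece homeomorphic to $S^3$ with $r$ open balls removed (so its $\IC$-homology is $\IC,0,\IC^{r-1},0$ in degrees $0,1,2,3$), and the gluing locus is the disjoint union of the $r$ resulting $2$-spheres. Because $P$ and these $2$-spheres are simply connected, the restriction of $\widetilde M$ over them is a disjoint union of copies of a simply connected space freely permuted by $\pi_1(M)$; hence by the induction formula $\dim_{\caln(G)}(\caln(G)\otimes_{\caln(H)}V)=\dim_{\caln(H)}(V)$ the von Neumann dimensions of its $L^2$-homology are the ordinary $\IC$-Betti numbers, while over $M_j^{\circ}$ one gets the numbers $b_p^{(2)}(\widetilde{M_j^{\circ}})$. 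Feeding this into the weakly exact long $L^2$-Mayer--Vietoris sequence of the decomposition (the sequence underlying the Sum Formula), and using additivity of the von Neumann dimension over weakly exact sequences together with faithfulness --- in particular $b_3^{(2)}(\widetilde M)=0$ makes the relevant connecting map vanish --- I obtain a weakly exact sequence in degree $2$ that yields $b_2^{(2)}(\widetilde M)=-1+\sum_{j=1}^r b_2^{(2)}(\widetilde{M_j^{\circ}})$. Running the same argument for the decomposition of $M_j$ into $M_j^{\circ}$ and a collection of $3$-balls glued along the $2$-spheres, and using $b_3^{(2)}(\widetilde{M_j^{\circ}})=0$ (it has boundary), gives $b_2^{(2)}(\widetilde{M_j^{\circ}})=1+b_2^{(2)}(\widetilde{M_j})-b_3^{(2)}(\widetilde{M_j})$. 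Since $\partial M=\coprod_j\partial M_j$, the asserted formula for $b_2^{(2)}(\widetilde M)$ is therefore equivalent to the per-factor identity, valid for every prime $M_j$,
\[
  b_2^{(2)}(\widetilde{M_j})-b_3^{(2)}(\widetilde{M_j}) \;=\; \bigl|\{C\in\pi_0(\partial M_j)\mid C\cong S^2\}\bigr|-\frac{1}{|\pi_1(M_j)|}.
\]

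It remains to prove this per-factor identity, which I would do by cases following the geometry of $M_j$. If $M_j$ is closed with finite fundamental group, then $\widetilde{M_j}$ is a simply connected closed $3$-manifold, hence a homotopy $3$-sphere by Poincaré duality (the Poincaré conjecture is not needed), and since $\caln(\pi_1(M_j))=\IC\pi_1(M_j)$ one gets $b_p^{(2)}(\widetilde{M_j})=\tfrac{1}{|\pi_1(M_j)|}\dim_{\IC}H_p(\widetilde{M_j};\IC)$; so $b_2^{(2)}(\widetilde{M_j})=0$, $b_3^{(2)}(\widetilde{M_j})=1/|\pi_1(M_j)|$, $\partial M_j=\emptyset$, and the identity holds. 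If $M_j$ is closed with infinite fundamental group, then by the Sphere Theorem $M_j$ is either $S^1\times S^2$ --- where a direct computation of the $\IZ$-$CW$ chain complex of $\IR\times S^2$ shows $\widetilde{M_j}$ is $L^2$-acyclic --- or aspherical, and here I would invoke Thurston's Geometrization: the Jaco--Shalen--Johannson toral splitting decomposes $M_j$ along incompressible, $\pi_1$-injective tori into Seifert pieces (each carrying an $S^1$-action, or at least a normal infinite elementary amenable subgroup, hence $L^2$-acyclic) and hyperbolic pieces (odd-dimensional, hence $L^2$-acyclic), and since the gluing tori are themselves $L^2$-acyclic the Sum Formula propagates $L^2$-acyclicity to $\widetilde{M_j}$; in both sub-cases $b_2^{(2)}(\widetilde{M_j})=b_3^{(2)}(\widetilde{M_j})=0$, matching the identity. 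Finally, if $M_j$ has nonempty boundary then $b_3^{(2)}(\widetilde{M_j})=0$; capping off each $S^2$ boundary component by a ball and running the Mayer--Vietoris argument again gives $b_2^{(2)}(\widetilde{M_j})=\bigl|\{C\in\pi_0(\partial M_j)\mid C\cong S^2\}\bigr|+b_2^{(2)}(\widetilde{\widehat M_j})$ for the resulting prime manifold $\widehat M_j$ with no $S^2$ boundary component; if $\widehat M_j=B^3$ (i.e.\ $M_j=B^3$) the identity is checked directly, and otherwise $\widehat M_j$ is irreducible with infinite fundamental group, hence aspherical, so $b_2^{(2)}(\widetilde{\widehat M_j})=0$ by the Geometrization-based argument above (using a Haken hierarchy to reduce to the toral case when the boundary is compressible). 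The ``in particular'' clause then follows by inspecting the formulas: after discarding redundant $S^3$-summands, $b_1^{(2)}(\widetilde M)=b_2^{(2)}(\widetilde M)=0$ forces either $r=1$ with $\pi_1(M)$ infinite and $\partial M$ empty or a union of tori, or $r=2$ with $|\pi_1(M_1)|=|\pi_1(M_2)|=2$, whence each $M_j\cong\IR\IP^3$ by the classification of closed $3$-manifolds with fundamental group of order $2$; the passage from ``homotopy equivalent'' to ``homeomorphic'' is topological rigidity of $3$-manifolds.

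The main obstacle is the $L^2$-acyclicity of the aspherical irreducible prime pieces --- concretely, that $b_2^{(2)}(\widetilde{M_j})=0$ for such $M_j$. Poincaré--Lefschetz duality and the Euler--Poincaré formula only ever determine the difference $b_1^{(2)}(\widetilde{M_j})-b_2^{(2)}(\widetilde{M_j})$ (every attempt to isolate $b_2^{(2)}$ by duality loops back to Euler-characteristic bookkeeping), so genuine $3$-manifold structure theory is unavoidable here: Geometrization, the catalogue of $L^2$-acyclic building blocks (hyperbolic, Seifert, $S^1$-actions, mapping tori), and the Sum Formula along incompressible tori. Everything else --- the $(r-1)$ coming from the connecting spheres of the connected sum, the $1/|\pi_1(M_j)|$ coming from the $0$th $L^2$-Betti numbers, and the count of $S^2$ boundary components --- is routine bookkeeping with Mayer--Vietoris sequences and additivity of the von Neumann dimension.
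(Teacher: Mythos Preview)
Your argument is correct and is essentially the Lott--L\"uck proof that the paper cites (the paper itself gives no proof beyond the reference): reduce to $b_2^{(2)}$ via Euler--Poincar\'e and Poincar\'e duality, decompose the connected sum by an $L^2$-Mayer--Vietoris sequence over simply connected gluing spheres, and establish the per-factor identity by showing each irreducible piece with infinite $\pi_1$ and no spherical boundary is $L^2$-acyclic via Geometrization (JSJ into Seifert and hyperbolic blocks glued along $\pi_1$-injective tori, each block and each torus being $L^2$-acyclic). Two cosmetic points: capping off the $S^2$ boundary of $M_j=B^3$ yields $\widehat{M_j}=S^3$, not $B^3$; and the phrase ``Haken hierarchy \ldots\ when the boundary is compressible'' is slightly garbled --- one first compresses (Loop Theorem), after which the irreducible manifold with nonempty non-spherical boundary is Haken and the hierarchy induction from Lott--L\"uck applies --- but the intended argument is the right one.
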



\subsection{$L^2$-torsion of $3$-manifolds}%
\label{subsec:L2-Betti_torsion_of_3-manifolds}

Finally we state the values for the $L^2$-torsion, see~\cite[Theorem
0.6]{Lueck-Schick(1999)}.

\begin{theorem}[$L^2$-torsion of $3$-manifolds]\label{the:L2-torsion_of_irreducible_3-manifold}
  Let $M$ be a compact connected orientable prime $3$-manifold with infinite fundamental
  group such that the boundary of $M$ is empty or a disjoint union of incompressible tori.
  Let $M_1$, $M_2$, $\ldots$, $M_r$ be the hyperbolic pieces.  (They all have finite
  volume~\cite[Theorem B on page 52]{Morgan(1984)}.)

  Then $\widetilde{M}$ is $\det$-$L^2$-acyclic and
  \[
    \rho^{(2)}(\widetilde{M}) = -\frac{1}{6\pi}\cdot \sum_{i=1}^r \vol(M_i).
  \]
  In particular, $\rho^{(2)}(\widetilde{M})$ is $0$ if and and only if $M$ is
  $S^1 \times S^2$ or is a graph manifold.
\end{theorem}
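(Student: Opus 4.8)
The plan is to induct along the Jaco--Shalen--Johannson splitting of Subsection~\ref{subsec:The_Jaco-Shalen-Johannson_splitting}, using Thurston's Geometrization Conjecture (Subsection~\ref{subsec:Thurstons_Geometrization_Conjecture}) to identify the geometrically atoroidal pieces with the finite-volume hyperbolic ones, and then to glue the contributions of the pieces together with the sum formula \eqref{list:main_properties_of_rho2(widetildeX):sum_formula}. If $M$ is not irreducible, then $M = S^1 \times S^2$; here $M$ is a compact $S^1$-$CW$-complex on which a regular orbit has infinite image in $\pi_1(M) = \IZ$, so by the $S^1$-action property \eqref{list:main_properties_of_rho2(widetildeX):S_upper_1-actions} the cover $\widetilde{M}$ is $\det$-$L^2$-acyclic and $\rho^{(2)}(\widetilde{M}) = 0$, and there is no hyperbolic piece, so the asserted formula holds. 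From now on assume $M$ is irreducible. Its toral splitting cuts $M$ along a finite, possibly empty, family of disjoint incompressible tori $T_1,\dots,T_k$ into pieces $P_1,\dots,P_s$, each of which is Seifert fibered or (by Geometrization) hyperbolic of finite volume; relabel the hyperbolic ones $M_1,\dots,M_r$. Since the $T_j$ and the boundary tori of $M$ are incompressible and $M$ is aspherical by the Sphere Theorem (Subsection~\ref{subsec:Topological_rigidity}), the inclusions induce injections $\pi_1(T_j)\to\pi_1(M)$ and $\pi_1(P_\ell)\to\pi_1(M)$, and the same holds at every stage of the process of building $M$ up one piece at a time, which is exactly what the sum formula requires; each $P_\ell$ is in turn irreducible with infinite fundamental group, hence aspherical.

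Next one checks that the non-hyperbolic pieces, and all the gluing tori, contribute nothing. For a torus $T_j$ the universal cover is $\IR^2$, and $T_j$ is an aspherical finite $CW$-complex with $\pi_1(T_j)=\IZ^2$ containing the infinite elementary amenable normal subgroup $\IZ$; so by the aspherical-spaces property \eqref{list:main_properties_of_rho2(widetildeX):aspherical} the cover $\widetilde{T_j}$ is $\det$-$L^2$-acyclic and $\rho^{(2)}(\widetilde{T_j})=0$. For a Seifert piece $P$ the regular fibre generates an infinite cyclic normal subgroup $\langle h\rangle\trianglelefteq\pi_1(P)$ sitting in the extension $1\to\langle h\rangle\to\pi_1(P)\to\pi_1^{\mathrm{orb}}\to 1$: this is because either $P$ has a boundary torus on which the fibre is an essential primitive curve and the boundary is incompressible in $P$, or $P$ is closed and the geometry table of Subsection~\ref{subsec:Thurstons_Geometrization_Conjecture} (the case $S^2\times\IR$, i.e.\ $S^1\times S^2$, having been treated already) forces the fibre to have infinite order. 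Applying the aspherical-spaces property \eqref{list:main_properties_of_rho2(widetildeX):aspherical} again (alternatively the $S^1$-action property \eqref{list:main_properties_of_rho2(widetildeX):S_upper_1-actions} applied to $P$ or to the finite $S^1$-bundle cover provided in Subsection~\ref{subsec:The_Jaco-Shalen-Johannson_splitting}) shows $\widetilde{P}$ is $\det$-$L^2$-acyclic with $\rho^{(2)}(\widetilde{P})=0$.

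For a hyperbolic piece $M_i$ the interior carries a complete hyperbolic metric of finite volume. If $M_i$ is closed this is precisely the hyperbolic-manifolds property \eqref{list:main_properties_of_rho2(widetildeX):hyperbolic} in the case $m=1$: $\widetilde{M_i}$ is $\det$-$L^2$-acyclic and $\rho^{(2)}(\widetilde{M_i}) = -\tfrac{1}{6\pi}\vol(M_i)$. The cusped case --- $M_i$ compact with incompressible torus boundary and complete finite-volume hyperbolic interior --- is the technical heart, is not covered by the listed properties, and is where I expect the real work to lie. The approach is to pass to the analytic $L^2$-torsion on the open complete hyperbolic manifold, exhaust it by the compact truncations $N_L$ obtained by chopping the cusp ends $T^2\times[0,\infty)$ at depth $L$ (each $N_L$ is simple-homotopy equivalent to $M_i$, hence has the same $L^2$-torsion, and $3$-manifold groups satisfy the Determinant Conjecture, so determinant-class issues evaporate, Remark~\ref{rem:status_of_Determinant_Conjecture}), and then to estimate the heat kernel near the cusps carefully enough to show (i) the local $L^2$-torsion density of the hyperbolic metric is the same universal constant as in the closed case, (ii) its integral over the cusp ends converges, and (iii) the contributions of the truncation boundaries vanish as $L\to\infty$; combined with the comparison of analytic and topological $L^2$-torsion on the compact core due to Burghelea--Friedlander--Kappeler--McDonald~\cite{Burghelea-Friedlander-Kappeler-McDonald(1996a)}, this yields $\rho^{(2)}(\widetilde{M_i}) = -\tfrac{1}{6\pi}\vol(M_i)$ in the cusped case as well.

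Finally one assembles everything. Iterating the sum formula \eqref{list:main_properties_of_rho2(widetildeX):sum_formula} over the decomposition of $M$ into the pieces $P_\ell$ glued along the tori $T_j$ --- legitimate by the $\pi_1$-injectivity noted above and the $\det$-$L^2$-acyclicity of every piece and every torus --- shows that $\widetilde{M}$ is $\det$-$L^2$-acyclic and, after a straightforward Mayer--Vietoris bookkeeping,
\[
  \rho^{(2)}(\widetilde{M}) \;=\; \sum_{\ell=1}^{s}\rho^{(2)}(\widetilde{P_\ell}) \;-\; \sum_{j=1}^{k}\rho^{(2)}(\widetilde{T_j}) \;=\; \sum_{i=1}^{r}\rho^{(2)}(\widetilde{M_i}) \;=\; -\frac{1}{6\pi}\sum_{i=1}^{r}\vol(M_i),
\]
since the torus terms and the Seifert terms all vanish. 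For the last sentence of the theorem, each $\vol(M_i)>0$, so $\rho^{(2)}(\widetilde{M})=0$ if and only if $r=0$, i.e.\ if and only if the toral splitting has no hyperbolic piece; by Geometrization this means all its pieces are Seifert fibered, i.e.\ $M$ is a graph manifold, which together with the separately treated case $M = S^1\times S^2$ gives the claimed characterization.
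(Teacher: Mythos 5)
Your proposal is correct and, as far as the survey is concerned, matches the intended route exactly: the paper does not actually prove this theorem but cites Lück--Schick~\cite{Lueck-Schick(1999)}, and that reference proceeds essentially as you describe --- reduce along the Jaco--Shalen--Johannson splitting via the sum formula, kill the Seifert pieces and cutting tori with the aspherical/amenable-normal-subgroup and $S^1$-action properties, and compute the hyperbolic pieces via the proportionality to volume, with the finite-volume cusped case being the bulk of the work. You correctly flag that cusped case as the genuine technical heart, not covered by the properties listed in Subsection~\ref{subsec:Basic_properties_of_L2-Betti_numbers_and_L2-torsion_of_universal_coverings_of_finite_CW-complexes}, and your sketch of it (local density of the analytic $L^2$-torsion is a universal multiple of $\operatorname{dvol}$ by homogeneity of $\IH^3$, the integral converges because the volume is finite, and the comparison of analytic and cellular $L^2$-torsion is pushed through a sequence of compact truncations) is the right general picture of what is done in that paper, even though the truncation/boundary estimates are the place where a serious amount of hard analysis is buried.

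Two small polish remarks. The cleanest way to see that every Seifert piece $P$ has an infinite cyclic normal subgroup is simply the Casson--Gabai characterization quoted in Subsection~\ref{subsec:The_Jaco-Shalen-Johannson_splitting}: $P$ is irreducible with infinite $\pi_1$ (it has a torus in its boundary, or equals $M$), and Seifert, hence $\pi_1(P)$ contains an infinite cyclic normal subgroup --- there is no need to appeal to the geometry table. And when a JSJ torus is non-separating, the ``Mayer--Vietoris bookkeeping'' should be spelled out as the pushout with $X_0 = T\sqcup T$, $X_1 = T\times[0,1]$ and $X_2$ the complement, which the sum formula handles with the same inclusion--exclusion outcome; you implicitly assume the separating case throughout.
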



\typeout{---- Section 5: The status of the conjectures about $L^2$-invariants for
  $3$-manifolds -----}

\section{The status of the conjectures about $L^2$-invariants for $3$-manifolds}%
\label{sec:The_status_of_the_conjectures_about_L2-invariants_for_3-manifolds}


\subsection{The Atiyah Conjecture}%
\label{subsec:The_Atiyah_Conjecture_3-manifolds}

The fundamental group of an admissible $3$-manifold $M$
that is not a closed graph manifold, is
torsionfree and satisfies the Atiyah Conjecture~\ref{con:Atiyah_Conjecture}, see
Theorem~\ref{the:Status_of_the_Atiyah_Conjecture}~\eqref{the:Status_of_the_Atiyah_Conjecture:3-manifold_not_graph}.


\subsection{The Singer Conjecture}%
\label{subsec:The_Singer_Conjecture_3-manifolds}

Every aspherical closed $3$-manifold satisfies the Singer
Conjecture~\ref{con:Singer_Conjecture} by
Theorem~\ref{the:L2-torsion_of_irreducible_3-manifold} since an aspherical closed
$3$-manifold is irreducible and has infinite fundamental group.


\subsection{The Determinant Conjecture}%
\label{subsec:The_Determinant_Conjecture_3-manifolds}

If $G$ is the fundamental group of a compact $3$-manifold, then $G$ is residually finite
and hence satisfies the Determinant Conjecture~\ref{con:Determinant_Conjecture} by
Remark~\ref{rem:status_of_Determinant_Conjecture}.


\subsection{Homological growth and $L^2$-torsion for aspherical manifolds}%
\label{subsec:Homological_growth_and_L2-torsion_for_aspherical_manifolds_3-manifolds}

The Conjecture~\ref{con:Homological_growth_and_L2-torsion_for_aspherical_manifolds} about
homological growth and $L^2$-torsion for aspherical manifolds, is wide open. To the
authors knowledge, there is no hyperbolic $3$-manifold, where it is known to be
true. Already this case would be very interesting.

Namely, suppose that $M$ is a closed hyperbolic $3$-manifold. Then
$\rho_{\an}(\widetilde{M})$ is known to be $- \frac{1}{6\pi} \cdot \vol(M)$, by
Theorem~\ref{the:L2-torsion_of_irreducible_3-manifold} and hence
Conjecture~\ref{con:Homological_growth_and_L2-torsion_for_aspherical_manifolds} predicts
\[
  \lim_{i \to \infty} \;\frac{\ln\big(\bigl|\tors(H_1(G_i))\bigr|\bigr)}{[G:G_i]} =
  \frac{1}{6\pi} \cdot \vol(M).
\]
Since the volume is always positive, the equation above implies that $|\tors(H_1(G_i))|$
grows exponentially in $[G:G_i]$.  Some evidence for
Conjecture~\ref{con:Homological_growth_and_L2-torsion_for_aspherical_manifolds} for closed
hyperbolic $3$-manifolds is given in Sun~\cite[Corollary~1,6]{Sun(2015)}, where it is
shown that for any finitely generated abelian group $A$, and any closed hyperbolic
3-manifold $M$, there exists a finite cover $N$ of $M$, such that $A$ is a direct summand
of $H_1(N;\IZ)$.

Bergeron-Sengun-Venkatesh~\cite{Bergeron-Sengun-Venkatesh(2016)} consider the equality
above for arithmetic hyperbolic $3$-manifolds and relate it to a conjecture about classes
in the second integral homology.

Some numerical evidence for the equality above is given in Sengun~\cite{Sengun(2011)}.

The inequality
\[
  \limsup _{i \to \infty} \;\frac{\ln\big(\bigl|\tors(H_1(G_i))\bigr|\bigr)}{[G:G_i]} \le
  \frac{1}{6\pi} \cdot \vol(M)
\]
is proved by Thang~\cite{Le(2018)} for a compact connected orientable irreducible
$3$-manifold $M$ with infinite fundamental group and empty or toroidal boundary.


\subsection{$L^2$-invariants and the simplicial volume}%
\label{subsec:L2-invariant_and_the_simplicial_volume_3-manifolds}

Define the positive real number $v_3$ to be the supremum of the volumes of all
$n$-dimensional geodesic simplices, i.e., the convex hull of $(n+1)$ points in general
position, in the $n$-dimensional hyperbolic space $\IH^3$.  If $M$ is an admissible
$3$-manifold, then one gets from~\cite[Theorem
0.6]{Lueck-Schick(1999)},~\cite{Soma(1981)}, and~\cite{Thurston(1978)},
see~\cite[Theorem~14.18 on page~490]{Lueck(2002)}
\[
  ||M|| = \frac{- 6\pi}{v_3} \cdot \rho^{(2)}(\widetilde{M}).
\]

In particular, $\rho^{(2)}(\widetilde{M}) = 0$ if and and only if $||M|| = 0$.  Hence
Conjecture~\ref{con:simplicial_volume_and_L2-invariants} is true in dimension~$3$.

It is not true for odd $n \ge 9$ that there exists a dimension constant $C_n$ such that
for an aspherical orientable closed manifold $M$ of dimension $n$ we have
$\rho^{(2)}(\widetilde{M}) = C_n \cdot ||M||$, see~\cite[Theorem~14.38 on
page~498]{Lueck(2002)}.

There are variants of the simplicial volume, namely, the notion of the \emph{integral
  foliated simplicial volume}, see~\cite[page~305f]{Gromov(1999a)},~\cite{Schmidt(2005)},
or~\cite[Section~2]{Frigerio-Loeh-Pagliantini_Sauer(2016)}, and of the \emph{stable
  integral simplicial volume}, see~\cite[page~709]{Frigerio-Loeh-Pagliantini_Sauer(2016)}.
The integral foliated simplicial volume gives an upper bound on the torsion growth for an
oriented closed manifold, i.e, an upper bound on
$\limsup_{i \to \infty} \;\frac{\ln\big(\bigl|\tors(H_n(M[i];\IZ))\bigr|\bigr)}{[G:G_i]}$
in the situation of
Conjecture~\ref{con:Homological_growth_and_L2-torsion_for_aspherical_manifolds},
see~\cite[Theorem~1.6]{Frigerio-Loeh-Pagliantini_Sauer(2016)}.  There are the open
questions whether for an aspherical oriented closed manifold the simplicial volume and the
integral foliated simplicial volume agree and whether for an aspherical oriented closed
manifold with residually finite fundamental group the integral foliated simplicial volume
and the stable integral simplicial volume agree, see~\cite[Question~1.2 and
Question~1.3]{Frigerio-Loeh-Pagliantini_Sauer(2016)}.  The stable integral simplicial
volume and the simplicial volume agree for aspherical oriented closed $3$-manifolds,
see~\cite[Theorem~1]{Fauser-Loeh-Moraschini-Quintanilha(2019)}.


\typeout{---- Section 6: Twisting $L^2$-invariants with finite-dimensional representations
  -----}

\section{Twisting $L^2$-invariants with finite-dimensional representations}%
\label{sec:Twisting_L2-invariants_with_finite-dimensional_representations}

In general one would like to twist $L^2$-Betti numbers and $L^2$-torsion with a
finite-dimensional representation. In this section we discuss the general case and the
technical difficulties and potential applications.  The case, where the representation is
a $1$-dimensional real representation, is much easier, since then all the technical
problems have been solved, and is very interesting for $3$-manifolds.  It is treated in
Section~\ref{sec:Twisting_L2-invariants_with_a_homomorphism_to_IR} and a reader may
directly pass to Section~\ref{sec:Twisting_L2-invariants_with_a_homomorphism_to_IR}.

A strategy to do this is discussed in~\cite{Lueck(2018)}.  Consider a group $G$ and a
$d$-dimensional complex $G$-representation $V$. Consider a $\IC G$-homomorphism
$f \colon \IC G^m \to \IC G^n$.  Choose a $\IC $-basis $B$ for the underlying complex
vector spaces $V$. (No compatibility conditions with the $G$-actions are required for this
basis.)  Then one can define a new $\IC G$-homomorphism
$\eta^G_{V,B}(f) \colon \IC G^{md} \to \IC G^{nd}$, see
Remark~\ref{Basic_idea_of_the_construction_of_eta_upper_G_(V,B)}. By applying
$L^2(G) \otimes_{\IC G} -$, we obtain a bounded $G$-equivariant operator
$\eta^{(2)}_{V,B}(f) \colon L^2(G)^{md} \to L^2(G)^{nd}$.  Analogously, we can assign to a
finite based free $\IC G$-chain complex $C_*$ a finite Hilbert $\caln(G)$-chain complex
$\eta^{(2)}_{V,B} (C_*)$.

One important question is what the relationship of the $L^2$-Betti numbers of
$\eta^{(2)}_{V,B} (C_*)$ and of $C^{(2)}_*$ are. The hope is that
\begin{eqnarray}
  b_p^{(2)}(\eta^{(2)}_{V,B} (C_*)) & = & \dim_{\IC}(V) \cdot b_p^{(2)}(C_*^{(2)})
                                          \label{L2-Betti_numbers_and_twistings}
\end{eqnarray}                                        
holds. This has interesting consequences for the behaviour of $L^2$-Betti numbers under
fibrations, see~\cite[Section~5.2]{Lueck(2018)}.  The answer to
Question~\ref{L2-Betti_numbers_and_twistings} is positive if $G$ is a torsionfree
elementary amenable group, see~\cite[Lemma~5.2]{Lueck(2018)}.

For $L^2$-torsion the following question is crucial.
\begin{question}\label{L2-torsion_and_twisting}
  Suppose that $C_*^{(2)}$ is $\det$-$L^2$-acyclic. Is then $\eta^{(2)}_{V,B} (C_*)$
  $\det$-$L^2$-acyclic?
\end{question}

\begin{remark}[$L^2$-torsion and character
  varieties]\label{rem:L2-torsion_and_character_varieties}
  Let $G$ be a group and denote by $R(G,\GL_n(\IC))$ the character variety given by group
  homomorphisms $u \colon G \to \GL_n(\IC)$.  Let $C_*$ be a $\det$-$L^2$-acyclic
  $\IC G$-chain complex. It may come from the cellular $\IZ G$-chain complex
  $C_*(\widetilde{X})$ for an appropriate $\det$-$L^2$-acyclic $CW$-complex $X$ with
  $G \cong \pi_1(X)$, for instance from a closed aspherical manifold $X$ of odd dimension
  with $G \cong \pi_1(X)$.
  
  If the answer to Question~\ref{L2-torsion_and_twisting} is positive, one could study the
  interesting function from the character variety $R(G,\GL_n(\IC))$, which assign to such
  $u \in R(G,\GL_n(\IC))$ the $L^2$-torsion of $\eta^{(2)}_{V_u,B_u} (C_*)$ for $V_u$ the
  $n$-dimensional complex $G$-representation with the obvious basis $B$ associated to $u$.
  One may ask whether the function on the character variety is continuous. This problem is
  in general wide open, but solved in some special case as we will see below.  We will
  describe some special cases, where this type of function leads to interesting results.
    
  If $\eta^{(2)}_{V_u,B_u} (C_*)$ has a gap at the spectrum at zero, then obviously the
  $L^2$-torsion of $\eta^{(2)}_{V_u,B_u} (C_*)$ is well-defined. Moreover the function
  sending $v \in R(G,\GL_n(\IC))$ to the $L^2$-torsion of $\eta^{(2)}_{V_u,B_u} (C_*)$ is
  well-defined and continuous in neighborhood of $u$. This follows form the continuity of
  the Fulgede-Kadison determinant for invertible matrices over the group von Neumann
  algebra with respect to the norm topology,
  see~\cite[Theorem~1.10~(d)]{Carey-Farber-Mathai(1997)},%
~\cite[Theorem~1~(3)]{Fuglede-Kadison(1952)}, or,~\cite[Lemma~9.14]{Lueck(2018)}. This
  is studied in more detail for a hyperbolic $3$-manifold $M$ with empty or incompressible
  torus boundary and the canonical holonomy representation
  $h \colon \pi_1(M) \to \SL_2(\IC)$ by B\'enard-Raimbault~\cite{Benard-Raimbault(2020)}.
  They actually show that this function is real analytic near $h$.
\end{remark}

\begin{remark}[Twisting $L^2$-torsion for $\det$-$L^2$-acyclic finite $CW$-complexes]%
\label{rem:Twisting_L2-torsion_for_det-L2-acyclic_finite_CW-complexes}
  Of course it is interesting to study for a $\det$-$L^2$-acyclic finite $CW$-complex its
  twisted $L^2$-torsion
  $\rho^{(2)}(X;V,B) = \rho^{(2)}(\eta^{(2)}_{V,B} (C_*(\widetilde{X})))$ for a finite
  dimensional complex $\pi_1(X)$-repre\-sen\-ta\-tion $V$ with a basis $B$ for its
  underlying vector space. The basic properties including the independence of the choice
  of $B$ are discussed in~\cite[Theorem~6.7]{Lueck(2018)}.

  Unfortunately, deciding Question~\ref{L2-torsion_and_twisting} seems to be very hard.
  The only case, where one knows that the answer is positive, is the one, where $G$ is
  finitely generated residually finite, $V$ is a $\IZ^d$-representation and $V$ is viewed
  as $G$-representation by a group homomorphism from $G$ or $\pi_1(X)$ to $\IZ^d$,
  see~\cite[Theorem~6.7]{Lueck(2018)}. There are interesting results in this setting as we
  see below, for instance if $V$ is $1$-dimensional.
\end{remark}

\begin{remark}[Unitary representations]\label{rem:unitary_representations}
  If the representation is unitary, then~\eqref{L2-Betti_numbers_and_twistings} is true
  and the answer to Question~\ref{L2-torsion_and_twisting} is positive, Moreover, we have
  $\rho^{(2)}(\eta^{(2)}_{V,B} (C_*)) = \dim_{\IC}(V) \cdot \rho^{(2)}(C_*^{(2)})$ and
  hence the twisting has no interesting effect, see~\cite[Theorem~3.1]{Lueck(2018)}.
  Hence it is crucial to consider not necessarily unitary representations.
\end{remark}

All these problems are related to the following question.  Define the \emph{regular
  Fuglede-Kadison determinant} of a morphism $f \colon U \to U$ of finitely generated
Hilbert $\caln(G)$-modules
\begin{eqnarray} \quad \quad \quad {\det}_{\caln(G)}^r(f) & := &
                                                                 \begin{cases} {\det}_{\caln(\Gamma)}(f) 
                                                                   & 
                                                                   \text{if} \; f \;\text{is injective and of determinant class;}
                                                                   \\ 
                                                                   0 & 
                                                                   \text{otherwise,}
                                                                 \end{cases}
                                                                       \label{regular_Fuglede-Kadison_determinant}
\end{eqnarray}

One should not confuse the Fuglede-Kadison determinant $\det_{\caln(G)}(f)$ and the
regular Fuglede-Kadison determinant $\det_{\caln(G)}^r(f)$ of a morphism
$f \colon U \to V$ of finitely generated Hilbert $\caln(G)$-modules,
see~\cite[Remark~8.9]{Lueck(2018)}.

For an element $x = \sum_{g \in G} \lambda_g \cdot g$ in $\IC G$ define its support
$\supp_G(x)$ to be the finite subset $\{g \in G \mid \lambda_g \not=0\}$ of $G$. For a
matrix $A= (a_{i,j}) \in M(m,n;\IC G)$ define its \emph{support} to be the finite subset
$\bigcup_{i,j} \supp_G(a_i,j)$ of $G$.  The following question is taken
from~\cite[Question~9.11]{Lueck(2018)}.

\begin{question}[Continuity of the regular determinant]%
\label{que:Continuity_of_the_regular_determinant}
  Let $G$ be a group for which there exists a natural number $d$, such that the order of
  any finite subgroup $H \subseteq G$ is bounded by $d$, e.g., $G$ is torsionfree. Let
  $S \subseteq G$ be a finite subset.  Put
  $\IC[n,S] := \{A \in M_{n,n}(\IC G) \mid \supp_G(A) \subseteq S\}$ and equip it with the
  standard topology coming from the structure of a finite-dimensional complex vector
  space.

  \begin{enumerate}

  \item\label{que:Continuity_of_the_regular_determinant:det} Is the function given by the
    regular Fuglede-Kadison determinant
    \[
      \IC[n,S] \to [0,\infty], \quad A \mapsto {\det}_{\caln(G)}^r(r_A^{(2)}\colon
      L^2(G)^n \to L^2(G)^n)
    \]
    continuous?

  \item\label{que:Continuity_of_the_regular_determinant:L2-acyclic} Consider
    $A \in \IC[S]$ such that $r_A^{(2)} \colon L^2(G)^n \to L^2(G)^n$ is a weak
    isomorphism of determinant class.  Does there exist an open neighbourhood $U$ of $A$
    in $\IC[S]$ such that for every element $B \in U$ also
    $r_B^{(2)} \colon L^2(G)^n \to L^2(G)^n $ is a weak isomorphism of determinant class?
  \end{enumerate}
\end{question}

We mention that the answer to this question is known to be negative for some finitely
presented groups for which there is no bound on the order of its finite subgroups,
see~\cite[Remark~9.12]{Lueck(2018)}.  Moreover, one cannot discard the condition about the
existence of the finite set $S$, see~\cite[Remark~9.13]{Lueck(2018)}.  The answer is
positive if $G$ is finitely generated abelian. It is possible that the answer is always
positive for a torsionfree finitely generated group $G$.

\begin{remark}[Basic idea of the construction of $\eta^G_{V,B}$]%
\label{Basic_idea_of_the_construction_of_eta_upper_G_(V,B)}
  The basic idea is the following. Let $M$ and $V$ be $\IC G$-modules. Denote by
  $(M \otimes_{\IC} V)_1$ the $\IC G$-module whose underlying vector space is
  $M \otimes_{\IC} V$ and on which $g \in G$ acts only on the first factor, i.e.,
  $g (u \otimes v) = gu \otimes v$.  Denote by $(M\otimes_{\IC} V)_d$ the $\IC G$-module
  whose underlying vector space is $M \otimes_{\IC} V$ and on which $g \in G$ acts
  diagonally, i.e., $g (u \otimes v) = gu \otimes gv$.  Note that $(M \otimes_{\IC} V)_1$
  is independent of the $G$-action on $V$ and $\IC G$-isomorphic to the direct sum of
  $\dim_{\IC}(V)$ copies of $M$, whereas $(\IC G \otimes_{\IC} M)_d$ does depend on the
  $G$-action on $M$.  We obtain a $\IC G$-isomorphism
  \[
    \xi_V(M) \colon (M \otimes_{\IC} V)_1 \xrightarrow{\cong} (M \otimes_{\IC} V)_d, \quad
    g \otimes v \mapsto g \otimes gv,
  \]
  whose inverse sends $g \otimes v$ to $g \otimes g^{-1}v$. Given a $\IC G$-homomorphism
  $f \colon \IC G^m \to \IC G^n$, we obtain a $\IC G$-homomorphism
  $(f \otimes_{\IC} \id_V)_d \colon (\IC G^m \otimes_{\IC} V)_d \to (\IC G^n \otimes_{\IC} V)_d$.  If $V$ is
  a $d$-dimensional complex representation which comes with a basis for the underlying
  complex vector space, we obtain an identification
  $(\IC G^m \otimes_{\IC} V)_1 = \IC G^{md}$ and we define $\eta^G_{V,B}(f)$ by requiring
  that the following diagram commutes
  \[
    \xymatrix@!C=11em{\IC G^{md} = (\IC G^m \otimes_{\IC} V)_1 \ar[r]^-{\xi_V(\IC G^m)}
      \ar[d]_{\eta^G_{V,B}(f)} & (\IC G^m \otimes_{\IC} V)_d\ar[d]^{f \otimes_{\IC}\id_V}
      \\
      \IC G^{nd} = (\IC G^n \otimes_{\IC} V)_1 \ar[r]_-{\xi_V(\IC G^n)} & (\IC G^n
      \otimes_{\IC} V)_d }
  \]
  More details can be found in~\cite[Section~1 and~2]{Lueck(2018)}.
\end{remark}

Some information about the equality of analytic and topological torsion for the twisted versions can be found for instance
in~\cite{Wassermann(2020)}.


\typeout{---- Section 7: Twisting $L^2$-invariants with homomorphism to $\IR$ -----}

\section{Twisting $L^2$-invariants with a homomorphism to $\IR$ }%
\label{sec:Twisting_L2-invariants_with_a_homomorphism_to_IR}

Consider a finite connected $CW$-complex $X$ and an element
$\phi \in H^1(X;\IR) = \hom(\pi_1(X),\IR)$. We call two functions
$f_0, f_1 \colon \IR^{>0} \to \IR$ \emph{equivalent} if there exists an element
$r \in \IR$ such that $f_0(t) - f_1(t) = r \cdot \ln(t)$ holds for all $t \in
\IR^{>0}$. In the sequel function $\IR^{>0} \to \IR$ is often to be understood as an
equivalence class of functions $\IR^{>0} \to \IR$. One has to interprete some statements
to be for one and hence all representatives and equality of functions means the equality
of their equivalence classes.

\begin{assumption}\label{ass:assumption_on_pi_1(X)}
  We will assume that
  \begin{itemize}

  \item The finite $CW$-complex $X$ is $\det$-$L^2$-acyclic;
  \item Its fundamental group $\pi_1(X)$ is residually finite;
  \item Its fundamental group $\pi_1(X)$ satisfies the Farrell-Jones Conjecture for
    $\IZ G$.
  \end{itemize}
\end{assumption}

\begin{remark}[Assumption~\ref{ass:assumption_on_pi_1(X)}]\label{rem:assumption}
  The reader does not need to know what the $K$-theoretic Farrell-Jones Conjecture for
  $\IZ G$ is, it can be used as a black box. The reader should have in mind that it is
  known for a large class of groups, e.g., hyperbolic groups, CAT(0)-groups, solvable
  groups, lattices in almost connected Lie groups, fundamental groups of $3$-manifolds and
  passes to subgroups, finite direct products, free products, and colimits of directed
  systems of groups (with arbitrary structure maps).  For more information we refer for
  instance to~\cite{Bartels-Farrell-Lueck(2014),
    Bartels-Lueck(2012annals),Bartels-Lueck-Reich(2008hyper), Farrell-Jones(1993a),
    Kammeyer-Lueck-Rueping(2016), Lueck-Reich(2005),Wegner(2015)}.

  In particular Assumption~\ref{ass:assumption_on_pi_1(X)} is satisfied if $X$ is an
  admissible $3$-manifold.
\end{remark}

Then from the construction of
Section~\ref{sec:Twisting_L2-invariants_with_finite-dimensional_representations} we get a
well-defined (equivalence class of) function $\IR^{>0} \to \IR$, denoted by
\begin{equation}
  \overline{\rho}^{(2)}(\widetilde{X};\phi) \colon \IR^{>0} \to \IR
  \label{rho_upper_(2)(widetilde(X);phi)}
\end{equation}
and called \emph{reduced twisted $L^2$-torsion function}. It sends $t \in \IR^{>0}$ to
$\rho^{(2)}(\widetilde{X};\IC_t)$ for the complex representation with underlying complex
vector space $\IC$ on which $g \in \pi_1(X)$ acts by multiplication with the real number
$t^{\phi(g)}$.

If $X$ is a finite not necessarily connected $CW$-complex, we require that
Assumption~\ref{ass:assumption_on_pi_1(X)} holds for each component $C$ of $X$ and we
define
\[
  \overline{\rho}^{(2)}(\widetilde{X};\phi) = \sum_{C \in \pi_0(X)}
  \overline{\rho}^{(2)}(\widetilde{C};\phi|_C).
\]

\begin{theorem}[Properties of the twisted $L^2$-torsion function]\label{the:Properties_of_the_twisted_L2-torsion_function}\
  Let $X$ be a finite $CW$-complex which satisfies
  Assumption~\ref{ass:assumption_on_pi_1(X)} and comes with an element
  $\phi \in H^1(X;\IR)$.

  \begin{enumerate}

  \item\emph{Well-definedness}\label{the:Properties_of_the_twisted_L2-torsion_function:well-defined}\\
    The function $\overline{\rho}^{(2)}(\widetilde{X};\phi)$ is well-defined;

  \item\label{the:Properties_of_the_twisted_L2-torsion_function:logarithmic_estimate}
    \emph{Logarithmic estimate}\\
    There exist constants $C \ge 0$ and $D \ge 0$, such that we get for $0 < t \le 1$
    \[
      C \cdot \ln(t) -D \le \overline{\rho}^{(2)}(\widetilde{X};\phi)(t) \le - C \cdot
      \ln(t) + D,
    \]
    and for $t \ge 1$
    \[
      - C \cdot \ln(t) -D \le \overline{\rho}^{(2)}(\widetilde{X};\phi)(t) \le C \cdot
      \ln(t) + D;
    \]

  \item\label{the:Properties_of_the_twisted_L2-torsion_function:homotopy-invariance}
    \emph{$G$-homotopy invariance}\\
    Let $Y$ be a finite $CW$-complex and let $f \colon Y \to X$ be a $G$-homotopy
    equivalence. Denote by $f^*\phi \in H^1(Y;\IR)$ the image of $\phi$ under the
    isomorphism $H^1(f;\IR) \colon H^1(X;\IR) \xrightarrow{\cong} H^1(Y;\IR)$.

    Then $Y$ satisfies Assumption~\ref{ass:assumption_on_pi_1(X)} with respect to
    $f^*\phi$ and we get
    \[
      \overline{\rho}^{(2)}(Y;f^*\phi) = \rho^{(2)}(X;\phi);
    \]

  \item\label{the:Basic_properties_of_the_reduced_L2-torsion_function_for_universal_coverings:sum_formula}
    \emph{Sum formula}\\
    Consider a cellular pushout of finite $CW$-complexes
    \[
      \xymatrix{ X_0 \ar[r]^{i_1} \ar[rd]^{j_0} \ar[d]_{i_2} & X_1 \ar[d]^{j_1}
        \\
        X_2 \ar[r]_{j_2} & X }
    \]
    where $i_1$ is cellular, $i_0$ is an inclusion of $CW$-complexes and $X$ has the
    obvious $CW$-structure coming from the ones on $X_0$, $X_1$ and $X_2$. Suppose that
    for $i =0,1,2$ the map $j_i$ is $\pi_1$-injective, i.e., for any choice of bases point
    $x_i \in X_i$ the induced map
    $\pi_1(j_i,x_i) \colon \pi_1(X_i,x_i) \to \pi_1(X,j_i(x_i))$ is injective.  Suppose we
    are given elements $\phi_i \in H^1(X_i;\IR)$ and $\phi \in H^1(X;\IR)$ such that
    $j_i^*(\phi) = \phi_i$ holds for $i = 0,1,2$.  Assume that $X_i$ for $i = 0,1,2$ and
    $X$ satisfy Assumption~\ref{ass:assumption_on_pi_1(X)}.

    Then we get
    \[
      \overline{\rho}^{(2)}(\widetilde{X};\phi) =
      \overline{\rho}^{(2)}(\widetilde{X_1};\phi_1) +
      \overline{\rho}^{(2)}(\widetilde{X_2};\phi_2) -
      \overline{\rho}^{(2)}(\widetilde{X_0};\phi_0);
    \]

  \item\label{the:Basic_properties_of_the_reduced_L2-torsion_function_for_universal_coverings:product_formula}
    \emph{Product formula}\\
    Let $Y$ be a finite connected $CW$-complex such that $\pi_1(Y)$ is residually
    finite.  Consider an element $\phi' \in H^1(X \times Y;\IR)$ such that $\phi$ is the
    image of $\phi'$ under the map $H^1(X \times Y;\IR) \to H^1(X;\IR)$ induced by the
    inclusion $X \to X \times Y, \; x \mapsto (x,y)$ for any choice of base point
    $y \in Y$.  Suppose that $X$ satisfies Assumption~\ref{ass:assumption_on_pi_1(X)}.

    Then $X \times Y$ satisfies Assumption~\ref{ass:assumption_on_pi_1(X)} with respect to
    $\phi'$ and we get
    \[
      \overline{\rho}^{(2)}(\widetilde{X \times Y}; \phi') = \chi(Y) \cdot
      \overline{\rho}^{(2)}(\widetilde{X};\phi);
    \]

  \item\label{the:Basic_properties_of_the_reduced_L2-torsion_function_for_universal_coverings:Poincare_duality}
    \emph{Poincar\'e duality}\\
    Let $X$ be a finite orientable $n$-dimensional Poincar\'e complex, e.g., a closed
    orientable manifold of dimension $n$ without boundary.  Then
    \[
      \overline{\rho}^{(2)}(\widetilde{X};\phi)(t) = (-1)^{n+1} \cdot
      \overline{\rho}^{(2)}(\widetilde{X};\phi)(t^{-1});
    \]

  \item\label{the:Basic_properties_of_the_reduced_L2-torsion_function_for_universal_coverings:(Multiplicativity)}
    \emph{Finite coverings}\\
    Let $p \colon Y \to X$ be a $d$-sheeted covering. Then $Y$ satisfies
    Assumption~\ref{ass:assumption_on_pi_1(X)} with respect to $p^*\phi$ and we get
    \[
      \overline{\rho}^{(2)}(\widetilde{Y};p^*\phi_X) = d \cdot
      \overline{\rho}^{(2)}(\widetilde{X};\phi);
    \]

  \item\label{the:Properties_of_the_twisted_L2-torsion_function:scaling} \emph{Scaling $\phi$}\\
    Let $r \in \IR$ be a real number.  Then
    \[
      \rho^{(2)}(X;r \cdot \phi)(t) = \rho^{(2)}(X;\phi)(t^r).
    \]

  \item\label{the:Properties_of_the_twisted_L2-torsion_function:value_at_t_is_0} \emph{Value for $t = 0$.}\\
    The value $\overline{\rho}^{(2)}(\widetilde{X};\phi)(0)$ is the $L^2$-torsion
    $\widetilde{\rho}(\widetilde{X})$.
  \end{enumerate}
\end{theorem}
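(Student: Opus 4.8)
The plan is to deduce every assertion from three inputs: the functoriality and exactness of the twisting construction $\eta^{(2)}_{\IC_t,B}$ of Section~\ref{sec:Twisting_L2-invariants_with_finite-dimensional_representations}, the behaviour of Fuglede--Kadison determinants under composition, block triangular form, adjoints, restriction and induction (Theorem~\ref{the:main_properties_of_det}), and the untwisted properties of $L^2$-torsion collected in Subsection~\ref{subsec:Basic_properties_of_L2-Betti_numbers_and_L2-torsion_of_universal_coverings_of_finite_CW-complexes}. First I would settle well-definedness, assertion~\eqref{the:Properties_of_the_twisted_L2-torsion_function:well-defined}: since $\phi\in H^1(X;\IR)=\hom(\pi_1(X),\IR)$ has image a finitely generated, hence free abelian, subgroup $\cong\IZ^d$ of $(\IR,+)$, the one-dimensional representation $\IC_t$ is a $\IZ^d$-representation regarded as a $\pi_1(X)$-representation through $\phi$, which is exactly the case covered by~\cite[Theorem~6.7]{Lueck(2018)}. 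Combining that theorem with Assumption~\ref{ass:assumption_on_pi_1(X)} — $X$ is $\det$-$L^2$-acyclic, $\pi_1(X)$ is residually finite, and $\pi_1(X)$ satisfies the Farrell--Jones Conjecture, so $\Wh(\pi_1(X))=0$ — gives that $\eta^{(2)}_{\IC_t,B}(C_*(\widetilde X))$ is $\det$-$L^2$-acyclic for every $t\in\IR^{>0}$, so that $\rho^{(2)}(\widetilde X;\IC_t)$ is defined, and that the resulting function of $t$ is, modulo functions $t\mapsto r\ln t$ (exactly the ambiguity produced by changing the $\IZ\pi_1(X)$-basis of the cellular chain complex by units $\pm g$, since such a change multiplies the relevant determinant by $t^{\pm\phi(g)}$), independent of the cellular structure and of the chosen lifts.

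Granting this, I would obtain the structural assertions~\eqref{the:Properties_of_the_twisted_L2-torsion_function:homotopy-invariance}--\eqref{the:Properties_of_the_twisted_L2-torsion_function:value_at_t_is_0} by applying $\eta^{(2)}_{\IC_t,B}$ to the corresponding untwisted statement and invoking the matching determinant property. For $\pi_1$-homotopy invariance~\eqref{the:Properties_of_the_twisted_L2-torsion_function:homotopy-invariance}: a homotopy equivalence $f\colon Y\to X$ lifts to a $\pi_1$-equivariant chain homotopy equivalence $C_*(\widetilde Y)\to C_*(\widetilde X)$ of based free $\IC\pi_1(X)$-chain complexes which is \emph{simple} because $\Wh(\pi_1(X))=0$; applying the exact additive functor $\eta^{(2)}_{\IC_t,B}$ keeps it simple, so the twisted $L^2$-torsions agree. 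For the sum formula~\eqref{the:Basic_properties_of_the_reduced_L2-torsion_function_for_universal_coverings:sum_formula}: $\pi_1$-injectivity of the $j_i$ identifies the cellular chain complex of the preimage of $X_i$ in $\widetilde X$ with a complex induced over $\IC\pi_1(X)$, producing a short exact Mayer--Vietoris sequence of based free $\IC\pi_1(X)$-chain complexes; since $\eta^{(2)}_{\IC_t,B}$ is exact and commutes with induction, one gets a short exact sequence of $\det$-$L^2$-acyclic finite Hilbert $\caln(\pi_1(X))$-chain complexes, and additivity of $L^2$-torsion (a consequence of Theorem~\ref{the:main_properties_of_det}~\eqref{the:main_properties_of_det:additivity}) yields the formula. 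For the product formula~\eqref{the:Basic_properties_of_the_reduced_L2-torsion_function_for_universal_coverings:product_formula}: $\eta^{(2)}_{\IC_t,B}$ of a tensor product of based free complexes is the tensor product of the twists of the factors (with respect to the two restrictions of $\phi'$), so the untwisted product formula applies, using that twisting does not alter Euler characteristics. For Poincar\'e duality~\eqref{the:Basic_properties_of_the_reduced_L2-torsion_function_for_universal_coverings:Poincare_duality}: the cap-product chain homotopy equivalence $C^{n-*}(\widetilde X)\xrightarrow{\simeq}C_*(\widetilde X)$ is simple by $\Wh=0$, twisting a dual complex by $\IC_t$ with respect to $\phi$ is (via the standard self-duality of the based free chain complex) the same as twisting $C_*(\widetilde X)$ by the contragredient representation $\IC_{t^{-1}}$, and ${\det}_{\caln(G)}(g)={\det}_{\caln(G)}(g^*)$ by Theorem~\ref{the:main_properties_of_det}~\eqref{the:main_properties_of_det:det(f)_is_det(fast)}; these combine to give the sign $(-1)^{n+1}$ together with the substitution $t\mapsto t^{-1}$. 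For finite coverings~\eqref{the:Basic_properties_of_the_reduced_L2-torsion_function_for_universal_coverings:(Multiplicativity)}: $\widetilde Y=\widetilde X$ with $\pi_1(Y)$ of finite index $d$ in $\pi_1(X)$, $\eta^{(2)}_{\IC_t,B}$ commutes with restriction, and ${\det}_{\caln(H)}(i^*g)={\det}_{\caln(G)}(g)^{[G:H]}$ by Theorem~\ref{the:main_properties_of_det}~\eqref{the:main_properties_of_det:restriction}. For the scaling law~\eqref{the:Properties_of_the_twisted_L2-torsion_function:scaling}: since $t^{(r\phi)(g)}=(t^{r})^{\phi(g)}$, the $r\phi$-twist at $t$ is literally the $\phi$-twist at $t^{r}$. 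Finally, for the value at the base point~\eqref{the:Properties_of_the_twisted_L2-torsion_function:value_at_t_is_0}: at $t=1$ the twisting representation $\IC_1=\IC$ is trivial, so $\eta^{(2)}_{\IC_1,B}(C_*(\widetilde X))=C_*^{(2)}(\widetilde X)$ on the nose and the function takes the value $\rho^{(2)}(C_*^{(2)}(\widetilde X))=\rho^{(2)}(\widetilde X)$ there; since $\ln 1=0$ this value is unambiguous for the equivalence class. In all of these steps the hypotheses of Assumption~\ref{ass:assumption_on_pi_1(X)} pass to the relevant subgroups and products, so the twisted invariants remain defined.

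The step I expect to be the main obstacle is the logarithmic estimate~\eqref{the:Properties_of_the_twisted_L2-torsion_function:logarithmic_estimate}. Writing $\overline{\rho}^{(2)}(\widetilde X;\phi)(t)=-\sum_p(-1)^p\ln{\det}_{\caln(G)}(c_p^{(2)}(t))$, it suffices to bound each $\ln{\det}_{\caln(G)}(c_p^{(2)}(t))$ above and below by an expression of the form $C_p|\ln t|+D_p$, separately on $0<t\le1$ and on $t\ge1$. The upper bound is routine: ${\det}_{\caln(G)}(h)\le\max(1,\|h\|)^{N}$ where $N$ is the von Neumann dimension of the source of $h$, and the twisted differential $c_p^{(2)}(t)$ is represented by a matrix whose entries have the form $\sum_{g}\lambda_g\,t^{\phi(g)}g$ over a common finite support $S$, so the triangle inequality gives $\|c_p^{(2)}(t)\|\le K_p\max(t^{a},t^{b})$ with $a=\max_{g\in S}\phi(g)$ and $b=\min_{g\in S}\phi(g)$, which is of the required shape. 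The lower bound is the hard direction, because the constants must be kept uniform in $t$: after factoring out the monomial $t^{b}$ one is reduced to controlling, uniformly as $t$ ranges over a half-line, the determinant class of the chain complex whose differentials have entries $\sum_g\lambda_g\,t^{\phi(g)-b}g$, all exponents being $\ge0$. I would carry this out in the spirit of~\cite[Theorem~6.7]{Lueck(2018)}, realizing $\overline{\rho}^{(2)}(\widetilde X;\phi)(t)$ as the limit over the residual chain $\{G_i\}$ of the normalized twisted torsions over $\IC[\pi_1(X)/G_i]$, which are logarithms of Laurent polynomials in $t^{a_1},\dots,t^{a_d}$, and bounding these normalized logarithms in terms of $|\ln t|$ uniformly in $i$. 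Securing this uniformity — equivalently, controlling how fast the spectrum of the twisted Laplacians accumulates at $0$ as $t$ varies over a half-line, in the spirit of the uniform-integrability discussion following Conjecture~\ref{con:Approximation_conjecture_for_Fuglede-Kadison_determinants_with_arbitrary_index} — is the crux; once it is available, summing the per-degree estimates produces the two displayed inequalities with common constants $C$ and $D$.
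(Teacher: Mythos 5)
Your reduction of assertions~\eqref{the:Properties_of_the_twisted_L2-torsion_function:well-defined} and~\eqref{the:Properties_of_the_twisted_L2-torsion_function:homotopy-invariance}--\eqref{the:Properties_of_the_twisted_L2-torsion_function:value_at_t_is_0} to the exactness and functoriality of the twisting construction, the properties of the Fuglede--Kadison determinant in Theorem~\ref{the:main_properties_of_det}, the vanishing of the Whitehead group forced by the Farrell--Jones Conjecture, and the $\IZ^d$-twisting result of~\cite[Theorem~6.7]{Lueck(2018)} is essentially the route the cited reference takes, and those parts of your argument look sound; note only that the ``Value for $t=0$'' item in the survey is plainly a typo for $t=1$, and your treatment of the $t=1$ case is the correct reading.

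The genuine gap is exactly where you flag it: the lower half of the logarithmic estimate~\eqref{the:Properties_of_the_twisted_L2-torsion_function:logarithmic_estimate}. Your upper bound $\ln\det_{\caln(G)}(c_p^{(2)}(t))\le N_p\ln\max(1,\|c_p^{(2)}(t)\|)$ combined with the operator-norm estimate for the twisted differential is fine, but because the torsion is an alternating sum, the two-sided estimate requires a \emph{lower} bound $\ln\det_{\caln(G)}(c_p^{(2)}(t))\ge -C_p|\ln t|-D_p$ on each differential separately, and this you do not actually prove. The residual-chain strategy you sketch replaces the question by one about $\frac{1}{[G:G_i]}\ln\bigl|\det_{\IC}(r_{A(t)[i]})\bigr|$ for Laurent polynomial matrices, but a nonzero integral Laurent polynomial in $t$ can be arbitrarily small at specific $t\in(0,1)$ near its roots, so no naive Mahler-measure or integrality argument gives the needed pointwise lower bound that is uniform in $i$; one must combine the Determinant Conjecture at $t=1$ with an argument that controls how the spectral density near zero of $r_{A(t)[i]}$ can degrade as $t$ moves along a half-line (precisely the uniform-integrability issue described after Conjecture~\ref{con:Approximation_conjecture_for_Fuglede-Kadison_determinants_with_arbitrary_index}). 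You acknowledge this as ``the crux'' without supplying the argument, so the proposal as written does not establish part~\eqref{the:Properties_of_the_twisted_L2-torsion_function:logarithmic_estimate}. Since well-definedness and the existence of the limit over the residual chain are themselves established in the same place by the same estimates, this gap also weakens your derivation of part~\eqref{the:Properties_of_the_twisted_L2-torsion_function:well-defined}: you cannot simultaneously invoke~\cite[Theorem~6.7]{Lueck(2018)} as a black box for well-definedness and then present part~\eqref{the:Properties_of_the_twisted_L2-torsion_function:logarithmic_estimate}, which is a component of that very theorem, as something to prove independently by approximation.
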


\begin{definition}[Degree of an equivalence class of functions $\IR^{> 0} \to \IR$]%
\label{def:Degree_of_an_equivalence_class_of_functions_IR_greater_0_to_IR}
  Let $\overline{\rho}$ be an equivalence class of functions $\IR^{> 0} \to \IR$. Let
  $\rho$ be a representative.  Assume that
  $\liminf_{t \to 0+} \frac{\rho(t)}{\ln(t)} \in \IR$ and
  $\limsup_{t \to \infty} \frac{\rho(t)}{\ln(t)} \in \IR$.

  Then define the \emph{degree at zero} and the \emph{degree at infinity} of $\rho$ to be
  the real numbers
  \begin{eqnarray*}
    \deg_0(\rho) 
    & := & 
           \liminf_{t \to 0+} \frac{\rho(t)}{\ln(t)};
    \\
    \deg_{\infty}(\rho) 
    & := & 
           \limsup_{t \to \infty} \frac{\rho(t)}{\ln(t)}.
  \end{eqnarray*}
  Define the \emph{degree} of $\overline{\rho}$ to be the real number
  \begin{eqnarray*}
    \deg(\overline{\rho}) 
    & := & 
           \deg_{\infty}(\rho)  -  \deg_0(\rho)  = 
           \limsup_{t \to \infty} \frac{\rho(t)}{\ln(t)} - \liminf_{t \to 0+} \frac{\rho(t)}{\ln(t)}.
    \\
  \end{eqnarray*}
\end{definition}

Thus we can assign to a finite $CW$-complex $X$ satisfying
Assumption~\ref{ass:assumption_on_pi_1(X)} and $\phi \in H^1(X;\IR)$ its \emph{degree}
\begin{equation}
  \deg(X;\phi) := \deg(\overline{\rho}^{(2)}(\widetilde{X};\phi)) \in \IR.
  \label{degree_of_phi_and_phi}
\end{equation}

This is a new invariant with high potential although it is very hard to compute. We will
be able to relate the degree to the Thurston norm for an admissible $3$-manifold in the
next
Section~\ref{sec:degree_of_the_reduced_twisted_L2-torsion_function_and_the_Thurston_norm}.

\begin{conjecture}\label{con:l2-continuous}
The reduced twisted $L^2$-torsion function
$\overline{\rho}^{(2)}(\widetilde{X};\phi) \colon \IR^{>0} \to \IR$ is continuous.

Moreover, the $\liminf$ and $\limsup$ terms appearing in Definition~\ref{def:Degree_of_an_equivalence_class_of_functions_IR_greater_0_to_IR}
are  actually  limits $\lim$.
\end{conjecture}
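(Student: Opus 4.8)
The plan is to reduce the conjecture to continuity (and asymptotic-slope) statements for Fuglede--Kadison determinants of the twisted Laplacians, and then to attack those through the division closure $\cald(\pi_1(X))$ and an approximation argument over finite quotients. Write $G = \pi_1(X)$ and fix the $CW$-basis of $C_*(\widetilde{X})$. By Theorem~\ref{the:Properties_of_the_twisted_L2-torsion_function}\eqref{the:Properties_of_the_twisted_L2-torsion_function:well-defined},\eqref{the:Properties_of_the_twisted_L2-torsion_function:logarithmic_estimate} the function $\overline{\rho}^{(2)}(\widetilde{X};\phi)$ is already known to be well-defined and $\IR$-valued, i.e. each twisted differential $c_p^{(2)}(t)$ of $\eta^{(2)}_{\IC_t,B}(C_*(\widetilde{X}))$ is of determinant class for all $t>0$; and one has $\overline{\rho}^{(2)}(\widetilde{X};\phi)(t) = - \frac{1}{2} \cdot \sum_p (-1)^p \cdot p \cdot \ln\bigl({\det}_{\caln(G)}(\Delta_p^{(2)}(t))\bigr)$, where $\Delta_p^{(2)}(t) = r^{(2)}_{B_p(t)}$ for a square matrix $B_p(t) = A_{p+1}(t)A_{p+1}(t)^{*} + A_p(t)^{*}A_p(t)$ over $\IC G$ whose entries are supported in one fixed finite subset $S \subseteq G$ independent of $t$ and depend real-analytically on $t \in \IR^{>0}$. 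Hence the continuity part of the conjecture asks exactly that $t \mapsto {\det}_{\caln(G)}(r^{(2)}_{B_p(t)})$ be continuous along a real-analytic path in the finite-dimensional space $\IC[n_p,S]$; on the locus where $\Delta_p^{(2)}(t)$ is injective --- conjecturally all of $\IR^{>0}$ --- this is literally the case of Question~\ref{que:Continuity_of_the_regular_determinant}\eqref{que:Continuity_of_the_regular_determinant:det} for the regular determinant. The statements about $\liminf$ and $\limsup$ require in addition an asymptotic analysis of the ends $t \to 0^{+}$ and $t \to \infty$.

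For continuity I would first delimit where $\Delta_p^{(2)}(t)$ can fail to be injective. Assuming $G$ satisfies the Atiyah Conjecture --- as for an admissible $3$-manifold that is not a closed graph manifold, Theorem~\ref{the:Status_of_the_Atiyah_Conjecture}\eqref{the:Status_of_the_Atiyah_Conjecture:3-manifold_not_graph} --- $\cald(G)$ is a skew field (Theorem~\ref{the:Main_properties_of_cald(G)}); base-changing the twisting construction to a suitable (twisted) Laurent ring over $\cald(G)$, the hypothesis that $C_*^{(2)}(\widetilde{X})$ is $\det$-$L^2$-acyclic forces the twisted complex to be torsion, hence $L^2$-acyclic after specialising the parameter to all but finitely many $t \in \IR^{>0}$. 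On the open set where moreover each $\Delta_p^{(2)}(t)$ is invertible in $\caln(G)$, i.e. has a spectral gap at $0$, Theorem~\ref{the:main_properties_of_det} and the norm-continuity of the Fuglede--Kadison determinant on invertible operators (the estimate used in Remark~\ref{rem:L2-torsion_and_character_varieties}) make $t \mapsto \ln {\det}_{\caln(G)}(\Delta_p^{(2)}(t))$, and hence $\overline{\rho}^{(2)}(\widetilde{X};\phi)$, real-analytic. Continuity across the remaining ``bad'' values $t_0$ is exactly the lower-semicontinuity direction of Question~\ref{que:Continuity_of_the_regular_determinant}\eqref{que:Continuity_of_the_regular_determinant:det}: upper semicontinuity of the (regular) Fuglede--Kadison determinant along the path is immediate from the definition via spectral density functions, while lower semicontinuity --- the statement that the spectral mass of $B_p(t)$ near $0$ does not jump up as $t \to t_0$ --- is the crux. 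I would try to prove it by an approximation argument: since $G$ is residually finite, fix a chain $G = G_0 \supseteq G_1 \supseteq \cdots$ with $\bigcap_i G_i = \{1\}$ and bound the spectral density functions of $r^{(2)}_{B_p(t)[i]}$ in a way uniform in $i$ and locally uniform in $t$ --- a ``uniform thinness at zero'' estimate in the spirit of~\cite{Lueck(2016_l2approx)} --- which would simultaneously give continuity at $t_0$ and the approximation of $\overline{\rho}^{(2)}$ by its finite-quotient analogues.

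For the assertion that the $\liminf$ and $\limsup$ in Definition~\ref{def:Degree_of_an_equivalence_class_of_functions_IR_greater_0_to_IR} are genuine limits, I would carry out an asymptotic (Newton-polytope, Mahler-measure-type) analysis of each ${\det}_{\caln(G)}(c_p^{(2)}(t))$. Writing $A_p(t) = \sum_r t^{r} A_{p,r}$ with $A_{p,r}$ over $\IC G$ and setting $r_p^{+} = \max\{r : A_{p,r}\ne 0\}$ and $r_p^{-} = \min\{r : A_{p,r}\ne 0\}$, one expects $\ln {\det}_{\caln(G)}(c_p^{(2)}(t)) = r_p^{\pm}\cdot \ln t + \ln {\det}_{\caln(G)}^r\bigl(r^{(2)}_{A_{p,r_p^{\pm}}}\bigr) + o(1)$ as $t \to +\infty$, respectively $t \to 0^{+}$, provided the extreme coefficient $A_{p,r_p^{\pm}}$ induces a weak isomorphism of determinant class onto a direct summand --- the ``nondegenerate leading coefficient'' condition, which is again a regular-determinant statement. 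Granting this, $\overline{\rho}^{(2)}(\widetilde{X};\phi)(t)/\ln t$ converges at both ends, the degree $\deg(X;\phi)$ equals $\sum_p (-1)^{p+1}\cdot(r_p^{+} - r_p^{-})$, and the $\liminf/\limsup$ become limits.

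The main obstacle throughout is Question~\ref{que:Continuity_of_the_regular_determinant}: controlling the spectrum near zero of the twisted operators both under perturbation of $t$ and at the two extreme ends of the parameter, for which no general mechanism is available. For $3$-manifolds one bypasses it by decomposing $X$ along its prime and JSJ splittings --- the Seifert pieces being accessible through $S^1$-actions and the hyperbolic pieces through the known analysis of the holonomy representation --- and patching via the sum formula, Theorem~\ref{the:Properties_of_the_twisted_L2-torsion_function}\eqref{the:Basic_properties_of_the_reduced_L2-torsion_function_for_universal_coverings:sum_formula}; but this structural input has no analogue for a general $X$ satisfying Assumption~\ref{ass:assumption_on_pi_1(X)}.
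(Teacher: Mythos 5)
The statement you set out to prove is labelled a Conjecture in the paper, and the paper offers no proof: it is open for general $X$ satisfying Assumption~\ref{ass:assumption_on_pi_1(X)}. The only case where the paper records a proof is that of admissible $3$-manifolds, due to Liu, and that proof leans on the prime and geometric decomposition of $3$-manifolds in an essential way. So there is no ``paper's own proof'' to compare against; the right way to assess your text is as a roadmap, not as a completed argument.

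As a roadmap, your reduction is on the mark. Rewriting $\overline{\rho}^{(2)}(\widetilde{X};\phi)(t)$ through the Laplacian formula and observing that the twisted matrices $B_p(t)$ live in $\IC[n_p,S]$ for a fixed finite $S$ and depend real-analytically on $t$ correctly places the continuity assertion inside the scope of Question~\ref{que:Continuity_of_the_regular_determinant}\eqref{que:Continuity_of_the_regular_determinant:det}, and you are candid that this is where the argument stops: upper semicontinuity of the regular Fuglede--Kadison determinant along the path is cheap from the spectral density picture, but lower semicontinuity at the ``bad'' parameters is exactly the open problem of preventing spectral mass from piling up at $0$ under a small perturbation of the matrix, and no general mechanism (uniform thinness at zero, or otherwise) is currently available. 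The same gap recurs in your asymptotic analysis of the ends: the claimed expansion $\ln {\det}_{\caln(G)}(c_p^{(2)}(t)) = r_p^{\pm}\cdot\ln t + \ln{\det}_{\caln(G)}^r(r^{(2)}_{A_{p,r_p^{\pm}}}) + o(1)$ hinges on a nondegeneracy of the extreme coefficient that is, again, a regular-determinant statement one does not know how to verify in general. There is also a secondary technical point: the Laurent expansion $A_p(t)=\sum_r t^r A_{p,r}$ with $A_{p,r}$ over $\IC G$ is only literally available when $\phi$ has values in $\IZ$ (or, after clearing denominators, in $\IQ$), because the twist multiplies basis elements by $t^{\phi(g)}$. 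For $\phi\in H^1(X;\IR)$ with irrational values the exponents are not integers, so the Newton-polytope/Mahler-measure picture needs either a reduction to rational classes using the scaling property of
Theorem~\ref{the:Properties_of_the_twisted_L2-torsion_function}\eqref{the:Properties_of_the_twisted_L2-torsion_function:scaling} together with a continuity-in-$\phi$ argument (itself not free), or a reformulation over rings of generalized polynomials. In short: your proposal correctly locates the difficulty, explains why the $3$-manifold case is special, and is consistent with what is known, but it does not close the gap --- the conjecture remains exactly as open as Question~\ref{que:Continuity_of_the_regular_determinant}.
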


Conjecture~\ref{con:l2-continuous}
has been proved for admissible $3$-manifolds $X$ by Liu~\cite[Theorem~1.2]{Liu(2017)},
where also multiplicative convexity is shown.

Moreover, for an admissible $3$-manifold $X$ the degree defines a continuous function on
$H^1(X;\IR)$, see~\cite[Theorem~6.1]{Liu(2017)}. We conjecture that this is true for every
finite $CW$-complex $X$ which satisfies Assumption~\ref{ass:assumption_on_pi_1(X)}.

Many of the results of this section are inspired by classical results on the Mahler
measure, see~\cite{Boyd(1981speculations),Boyd(1998approximation)}, which is the same as
the Fuglede-Kadison determinant in the special case $G = \IZ^d$, see~\cite[Example~3.13 on
page 128 and (3.23) on page~136]{Lueck(2002)}.


\typeout{---- Section 8: The degree of the reduced twisted $L^2$-torsion function and the
  Thurston norm -----}

\section{The degree of the reduced twisted $L^2$-torsion function and the Thurston norm}%
\label{sec:degree_of_the_reduced_twisted_L2-torsion_function_and_the_Thurston_norm}.

The following result was proved independently by
Friedl-L\"uck\cite[Theorem~0.1]{Friedl-Lueck(2019Thurston)} and by
Liu~\cite[Theorem~1.2]{Liu(2017)}. The proofs depend on the facts that both the Thurston
Geometrization Conjecture and the Virtually Fibering Conjecture are true, see
Subsection~\ref{subsec:Thurstons_Geometrization_Conjecture}
and~\ref{subsec:The_Virtual_Fibration_Conjecture}.

\begin{theorem}\label{the:main_result_introduction} 
  Let $M$ be an admissible $3$-manifold in the sense of
  Definition~\ref{def:admissible_3-manifold}.  Then we get for any element
  $\phi \in H^1(M;\IQ)$ that
  \[
    \deg(M;\phi) = - x_M(\phi),
  \]
  where the degree $\deg(M;\phi) := \deg(\overline{\rho}^{(2)}(\widetilde{M};\phi))$ has
  been defined in Section~\ref{sec:Twisting_L2-invariants_with_a_homomorphism_to_IR} and
  $x_M(\phi)$ is the Thurston norm, see
  Subsection~\ref{subsec:The_Thurston_norm_and_the_dual_Thurston_polytope}.
\end{theorem}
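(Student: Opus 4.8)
The plan is to transport the identity, via the structural properties collected in Theorem~\ref{the:Properties_of_the_twisted_L2-torsion_function}, down to the single case in which both sides can be computed directly — that of a fibered class — using Agol's Virtual Fibering Theorem to reach that case. First I would make two reductions. From the scaling formula $\overline{\rho}^{(2)}(\widetilde{M};r\phi)(t)=\overline{\rho}^{(2)}(\widetilde{M};\phi)(t^r)$ one gets $\deg(M;r\phi)=|r|\cdot\deg(M;\phi)$, which matches $x_M(r\phi)=|r|\cdot x_M(\phi)$ from~\eqref{scaling_Thurston_norm}; so it suffices to treat primitive integral $\phi$ (and then a general rational $\phi$ follows). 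Likewise $\deg(\widehat M;p^*\phi)=d\cdot\deg(M;\phi)$ by the finite-covering formula in Theorem~\ref{the:Properties_of_the_twisted_L2-torsion_function}, while $x_{\widehat M}(p^*\phi)=d\cdot x_M(\phi)$ by~\eqref{finite_coverings_Thurston_norm}, so I am free to replace $M$ by any finite cover.

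Assume first that $M$ is not a closed graph manifold, so that $\pi_1(M)$ is virtually $\operatorname{RFRS}$ by Theorem~\ref{thm:virtrfrs}. Then Theorem~\ref{thm:quasifib} provides a finite regular cover $p\colon\widehat M\to M$ in which $p^*\phi$ is quasi-fibered, i.e.\ a limit of fibered rational classes. Both $\psi\mapsto\deg(M;\psi)$ and $\psi\mapsto x_M(\psi)$ are continuous on $H^1(M;\IR)$ — the Thurston seminorm by construction, and the degree for admissible $3$-manifolds by Liu's analysis underlying Conjecture~\ref{con:l2-continuous}. Combined with the two reductions above, it is therefore enough to prove $\deg(M;\phi)=-x_M(\phi)$ for a \emph{fibered} primitive integral class $\phi$.

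For such a $\phi$, write $M$ as the mapping torus $T_f$ of a homeomorphism $f\colon F\to F$ of a compact surface, with $\phi$ induced by the projection $\pi_1(M)\to\IZ$; put $K=\ker\phi=\pi_1(F)$, so $\pi_1(M)=K\rtimes_f\IZ$. Since the relevant $3$-manifold groups satisfy the Atiyah Conjecture (Theorem~\ref{the:Status_of_the_Atiyah_Conjecture}) and $\cald(\pi_1(M))$ is then a skew field (Theorem~\ref{the:Main_properties_of_cald(G)}), one identifies $\cald(\pi_1(M))$ with the Ore localisation of the twisted Laurent ring $\cald(K)[t^{\pm1};f_*]$ and expresses $\overline{\rho}^{(2)}(\widetilde{M};\phi)(t)$ through Fuglede-Kadison determinants of the operators $\id - t^{\pm1}\cdot A$ coming from the algebraic mapping torus $\operatorname{cone}\bigl(\id - t f_*\colon C_*^{(2)}(\widetilde F)\to C_*^{(2)}(\widetilde F)\bigr)$. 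A direct determinant computation in the style of the $L^2$-torsion of mapping tori then shows that the degree is governed by the alternating sum of the cell numbers of $F$: one obtains $\deg(M;\phi)=\chi(F)$ when $\chi(F)\le 0$ and $\deg(M;\phi)=0$ when $\chi(F)\ge0$, which by~\eqref{x_for_fiber_bundles} is precisely $-x_M(\phi)$. The remaining case — $M$ a closed graph manifold — is handled separately: one decomposes $M$ along its JSJ tori, applies the sum formula of Theorem~\ref{the:Properties_of_the_twisted_L2-torsion_function} together with the behaviour of the twisted torsion function under the $S^1$-actions on the Seifert pieces, and matches the result with the known value of $x_M$ on graph manifolds.

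I expect the main obstacle to be the comparison, for integral $\phi$, between the analytically-defined twisted $L^2$-torsion function (and its degree) and the purely algebraic degree of the corresponding torsion over the skew field $\cald(K)[t^{\pm1};f_*]$: this passage rests both on the Atiyah Conjecture and on an approximation/continuity statement for the regular Fuglede-Kadison determinant function ${\det}^{r}_{\caln(\pi_1 M)}(\,\cdot\,)$ in $t$, which is exactly where Liu's continuity theorem enters and which makes the reduction to the dense set of fibered classes legitimate. A secondary difficulty is the bookkeeping in the fibered computation — the mixing of chain degrees in the mapping cone, convergence of the $\ln\det$ integrals, and the correct normalisation of the degree — together with checking that the inequality $\deg(M;\phi)\ge -x_M(\phi)$, asserting that this $L^2$/higher-order Alexander-type invariant is bounded by the Thurston norm, holds for all classes, so that Agol's theorem is needed only for the reverse inequality.
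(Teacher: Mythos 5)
The paper does not contain a proof of Theorem~\ref{the:main_result_introduction}; it is a survey and simply cites Friedl--L\"uck~\cite{Friedl-Lueck(2019Thurston)} and Liu~\cite{Liu(2017)}, noting only that the proofs rely on the Geometrization and Virtual Fibering Conjectures. Your outline does reconstruct the strategy common to those references: reduce via scaling and multiplicativity under finite coverings, use Theorem~\ref{thm:virtrfrs} and Theorem~\ref{thm:quasifib} to pass to a finite cover where every class is quasi-fibered, use Liu's continuity theorem (the proved case of Conjecture~\ref{con:l2-continuous}) to pass from fibered to quasi-fibered classes, carry out a mapping-torus computation for the fibered classes, and handle closed graph manifolds separately via JSJ, the sum formula, and $S^1$-actions. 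Those are indeed the ingredients the paper points to, so at the level of architecture your proposal is the right one.

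Two points are worth flagging. First, as you already acknowledge, the identification of the degree of $\overline{\rho}^{(2)}(\widetilde{M};\phi)$ with an algebraic degree computed over $\cald(K)[t^{\pm1};f_*]$ (so that a determinant computation on the fiber really yields $\deg(\widehat M;p^*\phi)=\chi(F)$) is the technical heart of the argument; your sketch rightly isolates this as the main difficulty but does not yet supply the estimates on ${\det}^r_{\caln}$ for all $t\in(0,\infty)$ that both Friedl--L\"uck and Liu need, so the proof is not complete as written. Second, your final remark that the inequality $\deg(M;\phi)\ge -x_M(\phi)$ might hold unconditionally and that Agol's theorem is only needed for the reverse inequality is consistent with the McMullen-type structure of the argument in the cited papers, but it is a claim, not something your outline establishes; as written, the continuity-plus-virtual-fibering route you describe proves both inequalities simultaneously on the dense set of fibered classes, so this comment, while useful context, is orthogonal to the proof you actually propose. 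Neither point is an error, but both mark places where the outline would need to be fleshed out to become a genuine proof rather than a correct reconstruction of the strategy.
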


Actually, Friedl-L\"uck~\cite[Theorem~5.1]{Friedl-Lueck(2019Thurston)} get a much more
general result, where one can consider not only the universal covering but appropriate
$G$-coverings $G \to \overline{M} \to M$ and get estimates for the $L^2$-function for all
times $t \in (0,\infty)$ which imply the equality of the degree and the Thurston norm.


\typeout{---- Section 9: The universal $L^2$-torsion and the Thurston polytope -----}

\section{The universal $L^2$-torsion and the Thurston polytope}%
\label{sec:The_universal_L2-torsion_and_the_Thurston_polytope}


\subsection{The weak Whitehead group}%
\label{subsec:The_weak_Whitehead_group}

Next we assign to a group $G$ the weak $K_1$-groups $K_1^w(\IZ G)$,
$\widetilde{K}_1^w(\IZ G)$ and the weak Whitehead group $\Wh^w(G)$, which are variations
on the corresponding classical groups.

\begin{definition}[$K^w_1(\IZ G)$]\label{def:K_1_upper_w(ZG)}
  Define the \emph{weak $K_1$-group}
  \[
    K_1^w(\IZ G)
  \]
  to be the abelian group defined in terms of generators and relations as follows.
  Generators $[f]$ are given by of $\IZ G$-endomorphisms $f \colon \IZ G^n \to \IZ G^n$
  for $n \in \IZ, n \ge 0$ such that the induced bounded $G$-equivariant operator
  $f^{(2)} \colon L^2(G)^n \to L^2(G)^n$ is a weak isomorphism of finite Hilbert
  $\caln(G)$-modules.  If $f_1,f_2 \colon \IZ G^n \to \IZ G^n$ are $\IZ G$-endomorphisms
  such that $f_1^{(2)}$ and $f_2^{(2)}$ are weak isomorphisms, then we require the
  relation
  \[ [f_2 \circ f_1] = [f_1] + [f_2].
  \]
  If $f_0 \colon \IZ G^m \to \IZ G^m$, $f_2 \colon \IZ G^n \to \IZ G^n$ and
  $f_1 \colon \IZ G ^n \to \IZ G^m$ are $\IZ G$-maps such that $f_0^{(2)}$ and $f_2^{(2)}$
  are weak isomorphisms, then we get for the $\IZ G$-map
  \[
    f = \begin{pmatrix} f_0 & f_1 \\ 0 & f_2 \end{pmatrix} \colon \IZ G^{m + n} = \IZ G^m
    \oplus \IZ G^n \to \IZ G^m \oplus \IZ G^n
  \]
  the relation
  \[ [f] = [f_0] + [f_2].
  \]
  Let
  \[
    \widetilde{K}^w_1(\IZ G)
  \]
  be the quotient of $K_1^w(\IZ G)$ by the subgroup generated by the element
  $[- \id \colon \IZ G\to \IZ G]$.  This is the same as the cokernel of the obvious
  composite $K_1(\IZ) \to K_1^w(\IZ G)$.  Define the \emph{weak Whitehead group} of $G$
  \[
    \Wh^w(G)
  \]
  to be the cokernel of the homomorphism
  \[
    \{\sigma \cdot g \mid \sigma \in \{ \pm 1\}, g \in G\} \to K_1^w(\IZ G), \quad \sigma
    \cdot g \mapsto [r_{\sigma \cdot g} \colon \IZ G \to \IZ G].
  \]
\end{definition}

These groups are in general much larger than their classical analogues. For example, we
have $\Wh(\IZ)=0$ and
$\Wh^w(\IZ)\cong \IQ(z^{\pm 1})^{\times}/\{\pm z^n \mid n \in \IZ\}$. More generally, if
$G$ is torsionfree, the Farrell-Jones Conjecture implies $\Wh(G) = \{0\}$ and we have the
following result taken from~\cite[Theorem~0.1]{Linnell-Lueck(2018)}.

\begin{theorem}[$K_1^w(G)$ and units in $\cald(G)$]\label{the:Whw(G)_and_units_in_cald(G;IQ)}
  Let $\calc$ be the smallest class of groups which contains all free groups and is closed
  under directed unions and extensions with elementary amenable quotients.  Let $G$ be a
  torsionfree group which belongs to $\calc$.

  Then the division closure $\cald(G)$ of $\IQ G$ in $\calu(G)$ is a skew field and there
  are isomorphisms of abelian groups
  \[
    K_1^w(\IZ G) \xrightarrow{\cong} K_1(\cald(G)) \xrightarrow{\cong}
    \cald(G)^{\times}/[\cald(G)^{\times},\cald(G)^{\times}].
  \]
\end{theorem}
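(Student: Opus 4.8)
The plan is to exploit the fact that $\mathcal D(G)$ is a skew field, which is the content of Theorem~\ref{the:Main_properties_of_cald(G)}~\eqref{the:Main_properties_of_cald(G):skew_field} together with the observation that any $G\in\mathcal C$ as in the statement satisfies the Atiyah Conjecture (this is Theorem~\ref{the:Status_of_the_Atiyah_Conjecture}~\eqref{the:Status_of_the_Atiyah_Conjecture:Linnell}). Once $\mathcal D(G)$ is a skew field, the second isomorphism $K_1(\mathcal D(G))\xrightarrow{\cong}\mathcal D(G)^{\times}/[\mathcal D(G)^{\times},\mathcal D(G)^{\times}]$ is just the classical Dieudonné determinant, so the work is entirely in producing the first isomorphism $K_1^w(\mathbb Z G)\xrightarrow{\cong}K_1(\mathcal D(G))$.

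First I would construct the map $K_1^w(\mathbb Z G)\to K_1(\mathcal D(G))$. A generator $[f]$ of $K_1^w(\mathbb Z G)$ is (the class of) a $\mathbb Z G$-endomorphism $f\colon\mathbb Z G^n\to\mathbb Z G^n$ whose $L^2$-completion $f^{(2)}$ is a weak isomorphism of Hilbert $\mathcal N(G)$-modules. Because $\mathcal D(G)$ sits inside the algebra of affiliated operators $\mathcal U(G)$ and because weak isomorphism of the completion translates (by the Atiyah Conjecture / the skew-field property, cf.\ the discussion around Theorem~\ref{the:Main_properties_of_cald(G)}) into invertibility of $\mathcal D(G)\otimes_{\mathbb Z G}f$ over the skew field $\mathcal D(G)$, we may send $[f]$ to the class of $\mathrm{id}_{\mathcal D(G)}\otimes f$ in $K_1(\mathcal D(G))$. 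The three defining relations of $K_1^w(\mathbb Z G)$ --- composition additivity, and the two upper-triangular block relations --- map to valid relations in $K_1(\mathcal D(G))$, so this is well defined. For surjectivity one uses that $\mathcal D(G)$ is the division closure of $\mathbb Z G$ (equivalently of $\mathbb Q G$) in $\mathcal U(G)$: every element of $\mathcal D(G)$, and hence every invertible matrix over $\mathcal D(G)$, can be obtained from matrices over $\mathbb Z G$ by the operations of sum, product and partial inverse, and tracing through these operations exhibits any class in $K_1(\mathcal D(G))$ as the image of a suitable product of weak-isomorphism classes $[f]$ --- the point being that passing to a partial inverse corresponds exactly to the block relation. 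For injectivity one constructs an inverse: given an invertible matrix $A$ over $\mathcal D(G)$, write it (again using the division-closure description, as in Linnell's localization sequence and the discussion in~\cite[Chapter~10]{Lueck(2002)}) as a composite of elementary-type moves coming from $\mathbb Z G$, and read off a well-defined class in $K_1^w(\mathbb Z G)$; one then checks this is independent of the chosen presentation, for which the relations in $K_1^w(\mathbb Z G)$ were precisely designed.

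The main obstacle I expect is the well-definedness of the inverse map, i.e.\ showing that two different ways of building an invertible $\mathcal D(G)$-matrix out of $\mathbb Z G$-data give the same element of $K_1^w(\mathbb Z G)$; this is where one has to know that $K_1^w(\mathbb Z G)$ has ``enough'' relations, and it is essentially the reason the weak $K_1$-group is defined with exactly the composition and block relations and no others. Here one leans on the structure of $\mathcal D(G)$ for $G\in\mathcal C$ via transfinite induction along the class $\mathcal C$ (free groups, directed unions, extensions by elementary amenable quotients), mirroring the inductive proof of Theorem~\ref{the:Main_properties_of_cald(G)}, so that at each stage one has a concrete enough description of $\mathcal D(G)$ (as an Ore localization or crossed product of skew fields) to verify the relations hold. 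The remaining steps --- that $\mathcal D(G)$ is a skew field, and that $K_1$ of a skew field equals the abelianized unit group via the Dieudonné determinant --- are either quoted from Theorem~\ref{the:Main_properties_of_cald(G)} or are classical, and the computation $\mathrm{Wh}^w(\mathbb Z)\cong\mathbb Q(z^{\pm1})^{\times}/\{\pm z^n\}$ falls out by specializing to $G=\mathbb Z$, where $\mathcal D(\mathbb Z)=\mathbb Q(z^{\pm1})$.
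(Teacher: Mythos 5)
The survey does not contain a proof of this theorem; it is quoted verbatim from Linnell--L\"uck~\cite[Theorem~0.1]{Linnell-Lueck(2018)}, so there is no in-paper argument to compare your sketch against. I will therefore assess the sketch on its own merits and against the strategy of the cited reference.

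Your high-level decomposition is correct: the skew-field statement follows from Theorem~\ref{the:Status_of_the_Atiyah_Conjecture}~\eqref{the:Status_of_the_Atiyah_Conjecture:Linnell} combined with Theorem~\ref{the:Main_properties_of_cald(G)}~\eqref{the:Main_properties_of_cald(G):skew_field}, the second isomorphism is the Dieudonn\'e determinant, and the forward map $K_1^w(\IZ G)\to K_1(\cald(G))$ you describe is well-defined because a weak isomorphism $f^{(2)}$ has $\dim_{\caln(G)}\ker(f^{(2)})=0$, so by Theorem~\ref{the:Main_properties_of_cald(G)}~\eqref{the:Main_properties_of_cald(G):dim} the induced map $\cald(G)\otimes_{\IZ G}f$ has trivial kernel over the skew field $\cald(G)$ and hence is invertible; the three defining relations visibly pass to $K_1$. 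So far so good.

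The substance of the theorem, however, lies entirely in the surjectivity and injectivity of this map, and your treatment of both is too vague to count as a proof. For surjectivity, the division-closure description of $\cald(G)$ tells you every element is reached by iterated sums, products, and partial inverses starting from $\IQ G$, but it does not by itself exhibit an arbitrary unit of $\cald(G)$ as the image of a $\IZ G$-matrix that becomes a weak isomorphism, nor does it control the relations in $K_1$; the step from ``the ring is generated by such operations'' to ``$K_1$ is generated by classes in the image'' is exactly where work is needed. For injectivity, ``read off a well-defined class in $K_1^w(\IZ G)$ and check it is independent of the presentation'' is the entire difficulty restated, not an argument; you acknowledge this, but then defer it to an unspecified transfinite induction along $\calc$. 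The actual mechanism used in~\cite{Linnell-Lueck(2018)} is to identify $\cald(G)$ as the universal (Cohn) localization of $\IZ G$ inverting the set $\Sigma$ of square matrices whose $L^2$-completions are weak isomorphisms, and then to invoke the noncommutative localization machinery relating $K_1$ of a universal localization to the $K$-group built from $\Sigma$ with exactly the composition and block relations used to define $K_1^w(\IZ G)$. Establishing that $\cald(G)$ really is this universal $\Sigma$-inverting ring is itself a theorem, again resting on the inductive structure of Linnell's class $\calc$, and it is the step your sketch omits. You gesture at ``Linnell's localization sequence,'' which is the right pointer, but without the universal-localization identification the inductive argument you propose has no concrete foothold.

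In short: the scaffolding is right, the forward map and the Dieudonn\'e reduction are correctly set up, but the core of the proof --- that $\cald(G)$ is the Cohn localization of $\IZ G$ at $\Sigma$ and that the resulting $K$-theoretic comparison map is an isomorphism --- is not carried out, and the hands-on alternative you suggest would need substantial additional work to be made rigorous.
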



\subsection{The universal $L^2$-torsion}%
\label{subsec:The_universal_L2-torsion}

Given an $L^2$-acyclic finite based free $\IZ G$-chain complex $C_*$,
Friedl-L\"uck~\cite[Definition~1.7]{Friedl-Lueck(2017universal)} assign to it its
\emph{universal $L^2$-torsion},
\begin{equation}
  \rho^{(2)}_u(C_*) \in \widetilde{K}_1^w(G).
\end{equation}
It is characterized by the universal properties that
\[\rho^{(2)}_u\big(0\to \IZ G\xrightarrow{\pm \id}\IZ G\to 0\big) = 0\] and that for any
short based exact sequence $0 \to C_* \to D_* \to E_* \to 0$ of $L^2$-acyclic finite based
free $\IZ G$-chain complexes we get
$\rho^{(2)}_u(D_*) =\rho^{(2)}_u(C_*) + \rho^{(2)}_u(E_*)$, as explained
in~\cite[Definition~1.16]{Friedl-Lueck(2017universal)}. If $X$ is a $\det$-$L^2$-acyclic
finite $CW$-complex with fundamental group $\pi = \pi_1(X)$, it defines an element
\begin{equation}
  \rho^{(2)}_u(\widetilde{X}) \in \Wh^w(\pi)
\end{equation}
determined by $\rho^{(2)}_u(C_*(\widetilde{X}))$, where $C_*(\widetilde{X})$ is the
cellular $\IZ \pi$-chain complex of the universal covering $\widetilde{X}$ of $X$.

The basic properties of these invariants including homotopy invariance, sum formula,
product formula, and Poincar\'e duality are collected
in~\cite[Theorem~2.11]{Friedl-Lueck(2017universal)}. One can show for a finitely presented
group $G$, for which there exists at least one $L^2$-acyclic finite connected $CW$-complex
$X$ with $\pi_1(X) \cong G$, that every element in $\Wh^w(G)$ can be realized as
$\rho^{(2)}_u(\widetilde{Y})$ for some $L^2$-acyclic finite connected $CW$-complex $Y$
with $G \cong \pi_1(Y)$, see~\cite[Lemma~2.8]{Friedl-Lueck(2017universal)}.

The point of this new invariant is that it encompasses many other well-known invariants
such as the reduced twisted $L^2$-torsion function (which is sometimes also called
$L^2$-Alexander torsion), as explained
in~\cite[Introduction]{Friedl-Lueck(2017universal)}.  We next illustrate this by
considering the dual Thurston polytope of an admissible $3$-manifold.


\subsection{Polytopes}%
\label{subsec:polytopes}

A \emph{polytope} in a finite-dimensional real vector space $V$ is a subset which is the
convex hull of a finite subset of $V$.  An element $p$ in a polytope is called
\emph{extreme} if the implication
$p= \frac{q_1}{2} + \frac{q_2}{2} \Longrightarrow q_1 = q_2 = p$ holds for all elements
$q_1$ and $q_2$ in the polytope.  Denote by $\Ext(P)$ the set of extreme points of $P$.
If $P$ is the convex hull of the finite set $S$, then $\Ext(P) \subseteq S$ and $P$ is the
convex hull of $\Ext(P)$.  The \emph{Minkowski sum} of two polytopes $P_1$ and $P_2$ is
defined to be the polytope
\[
  P_1 + P_2 := \{p_1 +p_2 \mid p_1 \in P_1, p \in P_2\}.
\]
It is the convex hull of the set
$\{p_1 + p_2 \mid p_1 \in \Ext(P_1), p_2 \in \Ext(P_2)\}$.

Let $H$ be a finitely generated free abelian group. We obtain a finite-dimensional real
vector space $\IR \otimes_{\IZ} H$.  An \emph{integral polytope} in $\IR \otimes_{\IZ} H$
is a polytope such that $\Ext(P)$ is contained in $H$, where we consider $H$ as a lattice
in $\IR \otimes_{\IZ} H$ by the standard embedding
$H \to \IR \otimes_{\IZ} H, \; h \mapsto 1 \otimes h$.  The Minkowski sum of two integral
polytopes is again an integral polytope. Hence the integral polytopes form an abelian
monoid under the Minkowski sum with the integral polytope $\{0\}$ as neutral element.

\begin{definition}[Grothendieck group of integral polytopes]\label{def:The_Grothendieck_group_of_integral_polytope}
  Let $\calp_{\IZ}(H)$ be the abelian group given by the Grothendieck construction applied
  to the abelian monoid of integral polytopes in $\IR \otimes_{\IZ} H$ under the Minkowski
  sum.
\end{definition}

Notice that for polytopes $P_0$, $P_1$ and $Q$ in a finite-dimensional real vector space
we have the implication $P_0 + Q = P_1 + Q \Longrightarrow P_0 = P_1$,
see~\cite[Lemma~2]{Radstroem(1952)}.  Hence elements in $\calp_{\IZ}(H)$ are given by
formal differences $[P] - [Q]$ for integral polytopes $P$ and $Q$ in $\IR \otimes_{\IZ} H$
and we have $[P_0] - [Q_0] = [P_1] - [Q_1] \Longleftrightarrow P_0 + Q_1 = P_1 + Q_0$.

There is an obvious homomorphism of abelian groups $i \colon H \to \calp_{\IZ}(H)$ which
sends $h \in H$ to the class of the polytope $\{h\}$. Denote its cokernel by
\begin{eqnarray}
  \calp_{\IZ}^{\Wh}(H) 
  & = & 
        \coker\bigl(i \colon H\to \calp_{\IZ}(H)\bigr).
        \label{calp_IZ,Wh}
\end{eqnarray}
Put differently, in $\calp_{\IZ}^{\Wh}(H)$ two polytopes are identified if they are
obtained by translation with some element in the lattice $H$ from one another.

\begin{example}\label{ex:polytopes-in-r}
  An integral polytope in $\IR \otimes_{\IZ} \IZ$ is given by an interval $[m,n]$ for
  integers $m,n$ with $m \le n$. The Minkowski sum becomes
  $[m_1,n_1] + [m_2,n_2] = [m_1 + m_2, n_1 +n_2]$. One easily checks that one obtains
  isomorphisms of abelian groups
  \begin{eqnarray}
    \quad \quad \quad \calp_{\IZ}(\IZ) & \xrightarrow{\cong} & \IZ^2
                                                               \quad 
                                                               [[m,n]]  \mapsto (n-m,m); 
                                                               \label{calp_Z(Z)_is_Z2}
    \\
    \quad \quad \quad \calp_{\IZ}^{\Wh}(\IZ) & \xrightarrow{\cong} & \IZ, \quad [[m,n]] \mapsto n-m. 
                                                                     \label{calp_Z,Wh(Z)_is_Z}
  \end{eqnarray}
\end{example}

Given a homomorphism of finitely generated abelian groups $f \colon H \to H'$, we can
assign to an integral polytope $P \subseteq \IR \otimes_{\IZ} H$ an integral polytope in
$\IR \otimes_{\IZ} H'$ by the image of $P$ under
$\id_{\IR} \otimes_{\IZ} f \colon \IR \otimes_{\IZ} H \to \IR \otimes_{\IZ} H'$ and thus
we obtain homomorphisms of abelian groups
\begin{eqnarray}
  \calp_{\IZ}(f) \colon \calp_{\IZ}(H) & \to & \calp_{\IZ}(H'),
                                               \quad [P] \mapsto [\id_{\IR} \otimes_{\IZ} f(P)];
                                               \label{calp_Z(f)}
  \\
  \calp_{\IZ}^{\Wh}(f) \colon \calp_{\IZ}^{\Wh}(H) & \to & \calp_{\IZ}^{\Wh}(H'). 
                                                           \label{calp_Z,Wh(f)}
\end{eqnarray}

The elementary proof of the next lemma can be found
in~\cite[Lemma~3.8]{Friedl-Lueck(2017universal)}.

\begin{lemma}\label{lem:structure_of_polytope_group}
  Let $H$ be a finitely generated free abelian group. Then:

  \begin{enumerate}

  \item\label{lem:structure_of_polytope_group:injection} The homomorphism
    \[
      \xi \colon \calp_{\IZ}(H) \to\prod_{\phi \in \hom_{\IZ}(H,\IZ)} \calp_{\IZ}(\IZ),
      \quad [P] - [Q] \mapsto \bigl(\calp_{\IZ}(\phi)([P]- [Q])\bigr)_{\phi}
    \]
    is injective;

  \item\label{lem:structure_of_polytope_group:splitting} The canonical short sequence of
    abelian groups
    \[0\, \to \,H \,\xrightarrow{i} \,\calp_{\IZ}(H) \,\xrightarrow{\pr} \,
      \calp_{\IZ}^{\Wh}(H)\, \to\, 0\] is split exact;

  \item\label{lem:structure_of_polytope_group:divisibility} The abelian groups
    $\calp_{\IZ}(H)$ and $\calp_{\IZ}^{\Wh}(H)$ are free.  They are finitely generated
    free if and only if $H \cong \IZ$.

  \end{enumerate}
\end{lemma}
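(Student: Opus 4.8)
The plan is to treat the three assertions in turn, the freeness claim being the only one that is not routine. For the first assertion I would record a virtual polytope $v=[P]-[Q]$ by its \emph{support function} $h_v\colon\hom_{\IZ}(H,\IZ)\to\IZ$, $h_v(\phi)=\max_{p\in P}\phi(p)-\max_{q\in Q}\phi(q)$; this is well defined and additive in $v$ because $\max$ is additive under Minkowski sums and Minkowski sums are cancellative. Under the identification $\calp_{\IZ}(\IZ)\cong\IZ^{2}$ of Example~\ref{ex:polytopes-in-r} the $\phi$-component of $\xi([P])$ is the pair $\bigl(\max_{p\in P}\phi(p)-\min_{p\in P}\phi(p),\ \min_{p\in P}\phi(p)\bigr)$, so knowing $\xi(v)$ amounts to knowing $h_{v}(\phi)$ for all $\phi\in\hom_{\IZ}(H,\IZ)$. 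Since $h_v$ is positively homogeneous and continuous, its values on the lattice $\hom_{\IZ}(H,\IZ)$ determine it on all of $(\IR\otimes_{\IZ}H)^{\ast}$; as a compact convex set is determined by its support function, $\xi(v)=0$ will force $v=0$.

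For the second assertion, surjectivity of $\pr$ and $\im(i)=\ker(\pr)$ are immediate from $\calp_{\IZ}^{\Wh}(H)=\coker(i)$, so I only have to see that $i$ is injective and that the sequence splits. If $[\{h\}]=0$, then $\{h\}+R=R$ for some integral polytope $R$ by cancellation, which is impossible for $h\neq 0$ since $R$ is bounded; hence $i$ is injective. To split the sequence I would fix a basis $e_{1},\dots,e_{n}$ of $H$ with dual basis $e_{1}^{\ast},\dots,e_{n}^{\ast}$ and define $r\colon\calp_{\IZ}(H)\to H$ on generators by $r([P])=\sum_{i}\bigl(\min_{p\in P}e_{i}^{\ast}(p)\bigr)e_{i}$; the minimum of an integer-valued linear functional over an integral polytope is an integer and is additive under Minkowski sums, so $r$ is a well-defined homomorphism with $r\circ i=\id_{H}$.

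For the third assertion I would first reduce, via the second assertion, to showing that $\calp_{\IZ}(H)$ itself is free, since $\calp_{\IZ}(H)\cong H\oplus\calp_{\IZ}^{\Wh}(H)$ with $H$ free. Now $\calp_{\IZ}(H)$ is countable (there are only countably many integral polytopes) and, by the first assertion, torsion-free, so by Pontryagin's criterion --- a countable torsion-free abelian group is free as soon as every finite subset lies in a finitely generated pure subgroup --- it suffices to produce such subgroups. Given $v_{1},\dots,v_{k}$, write $v_{j}=[P_{j}]-[Q_{j}]$, take $\Sigma$ to be the normal fan of $P_{1}+Q_{1}+\dots+P_{k}+Q_{k}$, a complete rational fan refining the normal fan of every $P_{j}$ and every $Q_{j}$, and let $\calp_{\IZ}(H;\Sigma)\subseteq\calp_{\IZ}(H)$ be the subgroup of those $v$ whose support function $h_{v}$ is linear on each cone of $\Sigma$. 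Then $v_{1},\dots,v_{k}\in\calp_{\IZ}(H;\Sigma)$; restricting $h_{v}$ to the finitely many, full-dimensional, maximal cones of $\Sigma$ embeds $\calp_{\IZ}(H;\Sigma)$ into a finite product of finitely generated free groups, so it is finitely generated free; and $\calp_{\IZ}(H;\Sigma)$ is pure in $\calp_{\IZ}(H)$ because $m\cdot h_{v}$ being linear on a cone forces $h_{v}$ to be linear there. For the last clause, $\calp_{\IZ}(\IZ)\cong\IZ^{2}$ is finitely generated, while for $\rk(H)\geq 2$ the classes of the segments $[0,u]$, with $u$ ranging over the infinitely many primitive vectors of $H$, are linearly independent --- their support functions $\phi\mapsto\max\{0,\phi(u)\}$ have non-cancelling breaks along the pairwise distinct hyperplanes $u^{\perp}$ --- and their images in $\calp_{\IZ}^{\Wh}(H)$ are independent as well, so neither group is finitely generated when $\rk(H)\geq 2$.

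The step I expect to be the main obstacle is the freeness in the third assertion: $\calp_{\IZ}(H)$ embeds into the infinite product $\prod_{\phi}\IZ^{2}$, which is very far from being free, so there is no direct conclusion, and one really has to exhibit the finitely generated pure ``fan-subordinate'' subgroups $\calp_{\IZ}(H;\Sigma)$ and appeal to Pontryagin's criterion --- equivalently, exhaust $\calp_{\IZ}(H)$ by an increasing chain of finitely generated free direct summands.
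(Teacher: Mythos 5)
Your proof is correct in all three parts; note that the paper itself does not include a proof of this lemma but cites~\cite[Lemma~3.8]{Friedl-Lueck(2017universal)} for one. For parts~(1) and~(2) your approach --- recording a virtual polytope by its support function, and defining the retraction $r$ via the minima of the dual basis functionals --- is the natural one. For part~(3) the argument via normal fans and Pontryagin's criterion works, but your aside that the embedding into $\prod_{\phi}\calp_{\IZ}(\IZ)$ ``gives no direct conclusion'' is not right: $\calp_{\IZ}(H)$ is countable (there are only countably many integral polytopes), $\prod_{\phi}\calp_{\IZ}(\IZ)\cong\prod_{\aleph_0}\IZ$ is the Baer--Specker group, and Specker's theorem asserts that every \emph{countable} subgroup of $\prod_{\aleph_0}\IZ$ is free even though the full group is not. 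So freeness of $\calp_{\IZ}(H)$, and hence of its direct summand $\calp_{\IZ}^{\Wh}(H)$, follows at once from part~(1); this is presumably the ``elementary'' route the citation has in mind. Your explicit fan-subordinate subgroups $\calp_{\IZ}(H;\Sigma)$ are nonetheless correct and more informative: they exhibit $\calp_{\IZ}(H)$ as the union of an increasing chain of finitely generated pure subgroups, which is exactly what Pontryagin's criterion requires and is also how Specker's theorem is proved, so the two routes are really the same argument with the combinatorics made explicit. The non-finite-generation claim for $\rk(H)\geq 2$ via the non-cancelling breaks of the support functions $\phi\mapsto\max\{0,\phi(u)\}$ along the pairwise distinct hyperplanes $u^{\perp}$ is also fine, both in $\calp_{\IZ}(H)$ and after passing to $\calp_{\IZ}^{\Wh}(H)$.
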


Explicit bases of the free abelian groups $\calp_{\IZ}(H)$ and $\calp_{\IZ}^{\Wh}(H)$ are
constructed by Funke~\cite{Funke(2016)}.


\subsection{The polytope homomorphism and the $L^2$-torsion polytope}%
\label{subsec:The_polytope_homomorphism_and_the_L2-torsion_polytope}

Given a torsionfree group $G$ that satisfies the Atiyah
Conjecture, the \emph{polytope homomorphism}
\begin{equation}
  \IP \colon \Wh^w(\IZ G) \to \calp_{\IZ}^{\Wh}(H_1(G)_f)
  \label{polytop_homomorphism}
\end{equation}
is constructed in~\cite[Section~3.2~3.2]{Friedl-Lueck(2017universal)}.

\begin{definition}[$L^2$-torsion polytope]\label{L2-torsion_polytope}
  Let $X$ is an $L^2$-acyclic finite $CW$-complex such that $\pi_1(X)$ is torsionfree and
  satisfies the Atiyah Conjecture.  The \emph{$L^2$-torsion
    polytope}
  \[
    P(\widetilde{X}) \in \calp_{\IZ}^{\Wh}(H_1(G)_f)
  \]
  is defined to be the negative of the image of the universal $L^2$-torsion
  $\rho^{(2)}_u(\widetilde{X})$ defined in
  Subsection~\ref{subsec:The_universal_L2-torsion} under the polytope
  homomorphism~\eqref{polytop_homomorphism}.
\end{definition}

Note that we abuse language here a little bit, the $L^2$-torsion polytope is a formal
difference of integral polytopes and not itself a polytope.


\subsection{The dual Thurston polytope and the $L^2$-torsion polytope}%
\label{subsec:The_dual_Turston_polytope_and_the_L2-torsion_polytope}

Of particular interest is the composition of the universal torsion with the polytope
homomorphism. For example let $M$ be an admissible $3$-manifold that is not a closed graph
manifold. Then we obtain a well-defined element
\[
  P(\widetilde{M})\,:=\, \IP(\rho_u(\widetilde{M}))\,\in \, \calp_{\IZ}^{\Wh}(H_1(M)_f).
\]

Recall from
Theorem~\ref{the:Status_of_the_Atiyah_Conjecture}~\ref{the:Status_of_the_Atiyah_Conjecture:3-manifold_not_graph}
the fundamental group of an admissible $3$-manifold $M$ satisfies the Atiyah
Conjecture and hence its $L^2$-torsion polytope
$P(\widetilde{X}) \in \calp_{\IZ}^{\Wh}(H_1(G)_f)$ is defined.  The next result is taken
from~\cite[Theorem~3.7]{Friedl-Lueck(2017universal)}.

\begin{theorem}[The dual Thurston polytope and the $L^2$-torsion polytope]%
\label{the:dual_Thurston_polytope_and_the_L2-torsion_polytope_new}
  Let $M$ be an admissible $3$-manifold.  Then
  \begin{eqnarray*} [T(M)^*] & = & 2 \cdot P(\widetilde{M})\,\, \in \,\,
    \calp_{\IZ}^{\Wh}(H_1(M)_f).
  \end{eqnarray*}
\end{theorem}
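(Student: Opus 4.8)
The plan is to compare both sides after applying, for every $\phi \in \hom_{\IZ}(H_1(M)_f,\IZ) = H^1(M;\IZ)$, the induced homomorphism $\calp_{\IZ}^{\Wh}(\phi)$ and passing to $\calp_{\IZ}^{\Wh}(\IZ) \cong \IZ$, and then to promote this directional information to an actual identity in $\calp_{\IZ}^{\Wh}(H_1(M)_f)$. First I would record the standing hypotheses: since $M$ is admissible, $\pi_1(M)$ is torsionfree and satisfies the Atiyah Conjecture (Theorem~\ref{the:Status_of_the_Atiyah_Conjecture}), so that the polytope homomorphism~\eqref{polytop_homomorphism} and the $L^2$-torsion polytope $P(\widetilde{M}) = -\IP\bigl(\rho^{(2)}_u(\widetilde{M})\bigr)$ of Definition~\ref{L2-torsion_polytope} make sense, and $\widetilde{M}$ is $\det$-$L^2$-acyclic by Theorem~\ref{the:L2-torsion_of_irreducible_3-manifold}. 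Write $H := H_1(M)_f$ for brevity.

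For the left-hand side: the Thurston seminorm is symmetric, $x_M(-\phi) = x_M(\phi)$ by~\eqref{scaling_Thurston_norm}, so the unit ball $B_{x_M}$ and hence the dual polytope $T(M)^* = B_{x_M}^*$ are centrally symmetric about $0$. From the explicit description $T(M)^* = \{v \in H_1(M;\IR) \mid \psi(v) \le x_M(\psi)\ \text{for all}\ \psi\}$ and Thurston duality, for each $\phi \in H^1(M;\IZ)$ one has $\max_{v \in T(M)^*}\phi(v) = x_M(\phi)$, and by central symmetry $\min_{v \in T(M)^*}\phi(v) = -x_M(\phi)$, so $(\id_{\IR}\otimes\phi)(T(M)^*) = [-x_M(\phi),x_M(\phi)]$. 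Under the identification $\calp_{\IZ}^{\Wh}(\IZ)\xrightarrow{\cong}\IZ$, $[[m,n]]\mapsto n-m$ of~\eqref{calp_Z,Wh(Z)_is_Z}, this gives $\calp_{\IZ}^{\Wh}(\phi)\bigl([T(M)^*]\bigr) = 2\,x_M(\phi)$.

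For the right-hand side I would invoke the compatibility of the polytope homomorphism with specialization along $\phi$: applying $\calp_{\IZ}^{\Wh}(\phi)$ to $\IP\bigl(\rho^{(2)}_u(\widetilde{M})\bigr)$ and passing to $\calp_{\IZ}^{\Wh}(\IZ)\cong\IZ$ reproduces, by the way the reduced twisted $L^2$-torsion function is built from the universal $L^2$-torsion (Friedl--L\"uck), the degree $\deg\bigl(\overline{\rho}^{(2)}(\widetilde{M};\phi)\bigr) = \deg(M;\phi)$, which by Theorem~\ref{the:main_result_introduction} equals $-x_M(\phi)$. Since $P(\widetilde M) = -\IP(\rho^{(2)}_u(\widetilde M))$, this yields $\calp_{\IZ}^{\Wh}(\phi)\bigl(P(\widetilde{M})\bigr) = x_M(\phi)$ and hence $\calp_{\IZ}^{\Wh}(\phi)\bigl(2\,P(\widetilde{M})\bigr) = 2\,x_M(\phi)$, agreeing with the left-hand side for every $\phi$.

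The main obstacle is the final step: $\calp_{\IZ}^{\Wh}(\phi)$ only records the \emph{width} of the projected interval, and a width function on $\hom_{\IZ}(H,\IZ)$ does not determine an integral polytope of rank $\ge 2$ up to translation — a triangle and its point-reflection have identical widths in all directions — so the directional matching above is a priori not enough. I would resolve this in one of two ways. Either one observes that both classes are centrally symmetric: $T(M)^*$ manifestly, and $2\,P(\widetilde M)$ up to translation by Poincar\'e duality for the universal $L^2$-torsion, i.e.\ the functional equation $\overline{\rho}^{(2)}(\widetilde{M};\phi)(t) = \overline{\rho}^{(2)}(\widetilde{M};\phi)(t^{-1})$ of Theorem~\ref{the:Properties_of_the_twisted_L2-torsion_function} (the sign $(-1)^{n+1}$ is $+1$ in dimension $n=3$), which forces $\deg_0 = -\deg_\infty$; for a centrally symmetric polytope the width determines the support function $h_P = \tfrac12\,\mathrm{width}$ and hence the polytope, so checking widths suffices. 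Or, preferably, one uses the sharper estimates of Friedl--L\"uck valid for all $t \in (0,\infty)$, which pin down $\deg_\infty\bigl(\overline{\rho}^{(2)}(\widetilde{M};\phi)\bigr)$ and $\deg_0\bigl(\overline{\rho}^{(2)}(\widetilde{M};\phi)\bigr)$ separately, equivalently the full support function $\phi \mapsto h_{P(\widetilde M)}(\phi) = x_M(\phi)/2$; as an integral polytope is determined by its support function, this gives $2\,P(\widetilde M) = T(M)^*$ as classes in $\calp_{\IZ}^{\Wh}(H)$. Either route completes the argument, the substantive input being imported from Theorem~\ref{the:main_result_introduction} and its refinement, while the present proof is the bookkeeping through the polytope group together with Thurston duality.
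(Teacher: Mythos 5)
Your proposal is sound in outline, and note that this survey does not actually reprove the theorem: it cites~\cite[Theorem~3.7]{Friedl-Lueck(2017universal)} and records no argument, so there is no in-paper proof to compare against. Your route also differs from the original: Friedl--L\"uck establish the polytope identity in~\cite{Friedl-Lueck(2017universal)}, whereas the degree formula of Theorem~\ref{the:main_result_introduction} appeared afterwards in~\cite{Friedl-Lueck(2019Thurston)} (and independently via Liu~\cite{Liu(2017)}, which keeps your argument non-circular). You correctly flag the pitfall that widths alone do not determine a class in $\calp_{\IZ}^{\Wh}(H)$, and your first fix works and is worth spelling out: Poincar\'e duality for the universal $L^2$-torsion of an odd-dimensional manifold gives $P(\widetilde M) = r\bigl(P(\widetilde M)\bigr)$ in $\calp_{\IZ}^{\Wh}(H)$, where $r$ is the reflection $[P]\mapsto[-P]$; hence $2P(\widetilde M) = P(\widetilde M) + r\bigl(P(\widetilde M)\bigr)$ is represented by a formal difference of centrally symmetric integral polytopes, as is $[T(M)^*]$, and two such classes with equal width in every direction $\phi$ must coincide (cross-add and compare support functions of symmetric polytopes).

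The genuinely substantive step that you invoke only in passing is the specialization identity $\calp_{\IZ}^{\Wh}(\phi)\bigl(\IP(\rho^{(2)}_u(\widetilde{M}))\bigr) = \deg\bigl(\overline{\rho}^{(2)}(\widetilde{M};\phi)\bigr)$ for $\phi \in H^1(M;\IZ)$. The left side is built through the polytope homomorphism on $\Wh^w(\pi_1(M))$ via the division closure $\cald(\pi_1(M))$, while the right side is built through Fuglede--Kadison determinants of $t^{\phi}$-twisted Hilbert chain complexes; matching them, including sign and normalization, is precisely the content that makes the universal $L^2$-torsion encompass the twisted torsion function, and it is not bookkeeping. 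Without a precise citation or a proof of this compatibility the argument has a real gap here; granting it, the rest of your proposal is correct.
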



\typeout{------------- Section 10: Profinite completion of the fundamental group of a
  $3$-manifold ----------}

\section{Profinite completion of the fundamental group of a $3$-manifold}%
\label{sec:Profinite_completion_of_the_fundamental_group_of_a_3-manifold}

We can associate to a (discrete) group \emph{its profinite completion} defined as
\begin{eqnarray}
  \widehat{G}  & := & \operatorname{invlim}_{N} G/N
                      \label{widehat(G)}
\end{eqnarray}
where $N$ runs through normal subgroups $N$ of $G$ with finite index $[G:N]$.  The inverse
limit $\widehat{G}$ is a compact, totally disconnected group.  The canonical group
homomorphism $i \colon G \to \widehat{G}$ has dense image.

\begin{definition}[Profinitely rigid]\label{def:profinitely_rigid}
  A finitely generated residually finite group $G$ is \emph{profinitely rigid} if for
  every finitely generated residually finite group $K$ with
  $\widehat{K} \cong \widehat{G}$ we have $K \cong G$.
\end{definition}

It makes no difference whether $\widehat{K} \cong \widehat{G}$ means abstract isomorphism
of groups or topological group isomorphism, see Nikolov and
Segal~\cite[Theorem~1.1]{Nikolov-Segal(2007profinite_I)}.

Recall that an admissible $3$-manifold $N$ is topologically rigid in the sense that any
other $3$-manifold $N$ with $\pi_1(N) \cong \pi_1(M)$ is homeomorphic to $M$, see
Subsection~\ref{subsec:Topological_rigidity}. So one may ask whether for two admissible
$3$-manifolds the following three assertions are equivalent
\begin{itemize}
\item $\widehat{\pi_1(M)} \cong \widehat{\pi_1(N)}$;
\item $\pi_1(M)\cong \pi_1(N)$;
\item $M$ and $N$ are homeomorphic.
\end{itemize}
 
To the author's knowledge profinite rigidity of fundamental groups of hyperbolic closed
$3$-manifolds, even among themselves, is an open question. Examples of hyperbolic closed
3-manifolds, whose fundamental groups are profinite rigid in the absolute sense, are
constructed in~\cite{Bridson-McReynolds-Reid-Spliler(2020)}. A weaker but still open
problem is the following which is equivalent to~\cite[Conjecture~6.33]{Kammeyer(2019)}.

\begin{conjecture}[Volume and profinitely rigidity]\label{con:Volume_and_profinitely_rigidity}
  Let $M$ and $N$ be admissible closed $3$-manifolds.  Then
  $\widehat{\pi_1(M)} \cong \widehat{\pi_1(N)}$ implies
  $\rho^{(2)}(\widetilde{M}) = \rho^{(2)}(\widetilde{N})$.
\end{conjecture}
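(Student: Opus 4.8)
The plan is to reduce the statement, via the explicit value of $\rho^{(2)}$ for $3$-manifolds, to a purely geometric assertion and then attack that. By Theorem~\ref{the:L2-torsion_of_irreducible_3-manifold} we have $\rho^{(2)}(\widetilde{M}) = -\frac{1}{6\pi}\sum_{i=1}^{r}\vol(M_i)$, where $M_1,\dots,M_r$ are the hyperbolic pieces of the geometric decomposition of $M$; equivalently, by the identity $\|M\| = \frac{-6\pi}{v_3}\cdot\rho^{(2)}(\widetilde{M})$ recorded in Subsection~\ref{subsec:L2-invariant_and_the_simplicial_volume_3-manifolds}, Conjecture~\ref{con:Volume_and_profinitely_rigidity} is the assertion that the simplicial volume of an admissible closed $3$-manifold depends only on the profinite completion of its fundamental group. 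So it suffices to prove that $\widehat{\pi_1(M)}\cong\widehat{\pi_1(N)}$ forces the hyperbolic pieces of $M$ and of $N$ to have the same total volume. The case $\rho^{(2)}(\widetilde{M})=0$, i.e.\ $M$ a graph manifold (or $S^1\times S^2$), is already unconditionally known, since being a graph manifold is detected by the profinite completion (Wilton--Zalesskii); the content is thus entirely in the presence of hyperbolic pieces.

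First I would descend from the whole manifold to its geometric pieces. The profinite completion of an irreducible $3$-manifold group determines the underlying graph of the Jaco--Shalen--Johannson decomposition together with the profinite completions of the vertex and edge groups, and it distinguishes Seifert from hyperbolic vertices (this is the ``profinite detection'' of the JSJ decomposition in the work of Wilton--Zalesskii; by Nikolov--Segal~\cite{Nikolov-Segal(2007profinite_I)} we may freely start from an arbitrary abstract isomorphism $\widehat{\pi_1(M)}\cong\widehat{\pi_1(N)}$). This produces a bijection between the hyperbolic pieces of $M$ and those of $N$ under which the corresponding fundamental groups --- of finite-volume, possibly cusped, hyperbolic $3$-manifolds --- have isomorphic profinite completions. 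Since the total volume is the sum over the pieces, we are reduced to the core assertion: \emph{the volume of a finite-volume hyperbolic $3$-manifold is an invariant of the profinite completion of its fundamental group.}

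For the core assertion the main route I would pursue runs through homological torsion growth. Fix such a manifold $P$ with $\Gamma=\pi_1(P)$ and a cofinal descending chain $\Gamma=\Gamma_0\supseteq\Gamma_1\supseteq\cdots$ of finite-index normal subgroups with $\bigcap_i\Gamma_i=\{1\}$. Each number $|\tors(H_1(\Gamma_i;\IZ))|$ is a profinite invariant: every finite-index subgroup of $\Gamma$ is open in the profinite topology, so finite-index subgroups of $\Gamma$ correspond to open subgroups of $\widehat{\Gamma}$ of equal index, the profinite completion of $\Gamma_i$ is the corresponding open subgroup, and $(\widehat{\Gamma_i})^{\mathrm{ab}}\cong\widehat{\Gamma_i^{\mathrm{ab}}}\cong\widehat{\IZ}^{\,b_1(\Gamma_i)}\oplus\tors(H_1(\Gamma_i;\IZ))$, so that both the first Betti number and the order of the torsion subgroup of $H_1(\Gamma_i;\IZ)$ are read off from $\widehat{\Gamma}$. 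Granting Conjecture~\ref{con:Homological_growth_and_L2-torsion_for_aspherical_manifolds} for hyperbolic $3$-manifolds, Theorem~\ref{the:L2-torsion_of_irreducible_3-manifold} gives $\frac{1}{6\pi}\vol(P)=\lim_{i\to\infty}\frac{\ln|\tors(H_1(\Gamma_i;\IZ))|}{[\Gamma:\Gamma_i]}$, exhibiting $\vol(P)$ as $6\pi$ times a limit of profinite invariants; assembling the pieces then finishes the proof. (An alternative route, sidestepping $L^2$-invariants, would reconstruct the canonical component of the $\SL_2(\IC)$-character variety --- or at least the invariant trace field and the relevant periods --- from the profinite completion and recover $\vol(P)$ as a period integral, which could then be fed back into Theorem~\ref{the:L2-torsion_of_irreducible_3-manifold}.)

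The main obstacle is precisely the input invoked last: Conjecture~\ref{con:Homological_growth_and_L2-torsion_for_aspherical_manifolds} is open for \emph{every} hyperbolic $3$-manifold, and even the mere existence of the limit $\lim_i\frac{\ln|\tors(H_1(\Gamma_i;\IZ))|}{[\Gamma:\Gamma_i]}$, let alone its identification with $\frac{1}{6\pi}\vol(P)$, is unknown; one only has the upper bound $\limsup_i\frac{\ln|\tors(H_1(\Gamma_i;\IZ))|}{[\Gamma:\Gamma_i]}\le\frac{1}{6\pi}\vol(P)$ of Le~\cite{Le(2018)}. A genuinely new idea seems necessary here --- either a proof of the torsion-growth conjecture in the hyperbolic case, or a direct arithmetic proof that hyperbolic volume is a profinite invariant --- whereas the reduction to the geometric pieces and the profinite invariance of the finite-level torsion are essentially available with known methods.
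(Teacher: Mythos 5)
The statement you are addressing is a \emph{conjecture}, and the paper does not prove it; it only records in Theorem~\ref{ref:homological_growth_versus_profinitely_rigid} (citing Kammeyer) that the homological growth Conjecture~\ref{con:Homological_growth_and_L2-torsion_for_aspherical_manifolds} in dimension~$3$ would imply it. Your proposal correctly identifies exactly that reduction as the available route, and the core mechanism you describe agrees with the paper's: $|\tors(H_1(\Gamma_i;\IZ))|$ and $[\Gamma:\Gamma_i]$ for finite-index normal subgroups $\Gamma_i\le \Gamma$ are invariants of $\widehat{\Gamma}$ (the closure of $\Gamma_i$ in $\widehat{\Gamma}$ is $\widehat{\Gamma_i}$, and its continuous abelianization recovers $b_1(\Gamma_i)$ and the torsion subgroup of $H_1(\Gamma_i;\IZ)$), so once Conjecture~\ref{con:Homological_growth_and_L2-torsion_for_aspherical_manifolds} identifies $-\rho^{(2)}(\widetilde{M})$ with the limit of $\ln|\tors(H_1(G_i;\IZ))|/[G:G_i]$, the left-hand side is a profinite invariant. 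You also correctly flag that the conjecture remains open and that its validity in dimension~$3$ is the genuine missing input.

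There is, however, a real gap in the way you route the argument. Your descent to the JSJ pieces of $M$ and then to the individual finite-volume hyperbolic pieces $P$ is unnecessary and, as written, breaks down: if $M$ has a non-trivial JSJ decomposition, the hyperbolic pieces have torus cusps and so are \emph{not} closed, whereas Conjecture~\ref{con:Homological_growth_and_L2-torsion_for_aspherical_manifolds} is formulated only for aspherical \emph{closed} manifolds, so the hypothesis the paper works under does not license the step ``Granting Conjecture~\ref{con:Homological_growth_and_L2-torsion_for_aspherical_manifolds} for hyperbolic $3$-manifolds, $\tfrac{1}{6\pi}\vol(P)=\lim_i \ln|\tors(H_1(\Gamma_i;\IZ))|/[\Gamma:\Gamma_i]$.'' You would need to formulate and assume an extension to compact aspherical manifolds with toroidal boundary, and in addition justify carefully that an isomorphism $\widehat{\pi_1(M)}\cong\widehat{\pi_1(N)}$ really induces matching isomorphisms between profinite completions of the individual JSJ vertex groups. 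The cleaner route, and the one the paper's conditional statement supports, is to apply Conjecture~\ref{con:Homological_growth_and_L2-torsion_for_aspherical_manifolds} directly to the \emph{closed} aspherical $3$-manifolds $M$ and $N$ themselves: for an exhausting nested chain $G_i$ in $G=\pi_1(M)$ one gets $-\rho^{(2)}(\widetilde{M})=\lim_i\ln|\tors(H_1(G_i;\IZ))|/[G:G_i]$, and the right-hand side is a profinite invariant by your own observation (after passing to a cofinal chain that can be matched across the isomorphism of profinite completions). This sidesteps the decomposition and the cusped case entirely.
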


Recall that for two hyperbolic $3$-manifolds we have
$\rho^{(2)}(\widetilde{M}) = \rho^{(2)}(\widetilde{M}) \Longleftrightarrow \vol(M) =
\vol(N)$, see Theorem~\ref{the:L2-torsion_of_irreducible_3-manifold}, and there are up to
diffeomorphism only finitely many hyperbolic $3$-manifolds with the same volume.

Liu~\cite[Theorem~1.1]{Liu(2020)} has shown that among the class of finitely generated
$3$-manifold groups, every finite-volume hyperbolic $3$-manifold group is profinitely
almost rigid, where a group G is \emph{profinitely almost rigid} among a class of groups
$\calc$, if there exist finitely many groups in $\calc$, such that any group in $\calc$
that is profinitely isomorphic to $G$ is isomorphic to one of those groups. Seifert manifolds and graph manifolds
have been treated in~\cite{Wilkes(2017Seifert),Wilkes(2018Journal_of_Algebra)}.

The proof of the following result can be found in~\cite[Satz~6.34]{Kammeyer(2019)}.

\begin{theorem}\label{ref:homological_growth_versus_profinitely_rigid}
  If Conjecture~\ref{con:Homological_growth_and_L2-torsion_for_aspherical_manifolds} holds
  in dimension $3$, Conjecture~\ref{con:Volume_and_profinitely_rigidity} is true.
\end{theorem}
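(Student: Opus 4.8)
The plan is to use the homological growth conjecture in dimension $3$ to rewrite, for an admissible closed $3$-manifold, the $L^2$-torsion as the limit of a sequence of torsion-growth quantities that manifestly depend only on the profinite completion of the fundamental group, and then to transport a given profinite isomorphism through these limits. Write $G := \pi_1(M)$ and $\Gamma := \pi_1(N)$ and fix a topological isomorphism $\alpha \colon \widehat{G} \xrightarrow{\cong} \widehat{\Gamma}$; by Nikolov--Segal it is irrelevant whether one is handed an abstract or a topological isomorphism. Since $M$ and $N$ are admissible and closed, they are irreducible with infinite fundamental group, hence aspherical by the Sphere Theorem, and $\widetilde{M}$, $\widetilde{N}$ are $L^2$-acyclic and of determinant class by Theorem~\ref{the:L2-torsion_of_irreducible_3-manifold} together with residual finiteness of $3$-manifold groups; in particular $\rho^{(2)}_{\an}(\widetilde{M}) = \rho^{(2)}(\widetilde{M})$ and likewise for $N$.

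Next I would build matching filtrations. Since $G$ is residually finite, choose a descending chain $G = G_0 \supseteq G_1 \supseteq G_2 \supseteq \cdots$ of finite-index normal subgroups with $\bigcap_i G_i = \{1\}$ (enumerate the finite-index normal subgroups and take successive intersections). Each $G_i$ is the preimage under $G \to \widehat{G}$ of a unique open normal subgroup $\overline{U}_i \trianglelefteq \widehat{G}$, with $[G:G_i] = [\widehat{G}:\overline{U}_i]$. Put $\overline{V}_i := \alpha(\overline{U}_i)$ and let $\Gamma_i \trianglelefteq \Gamma$ be the preimage of $\overline{V}_i$ under $\Gamma \to \widehat{\Gamma}$. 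Then $\Gamma = \Gamma_0 \supseteq \Gamma_1 \supseteq \cdots$ is a descending chain of finite-index normal subgroups with $\bigcap_i \Gamma_i = \{1\}$ (using that $\Gamma$ embeds into $\widehat{\Gamma}$) and $[\Gamma:\Gamma_i] = [\widehat{\Gamma}:\overline{V}_i] = [G:G_i]$ for all $i$; moreover, identifying the profinite completion of a finite-index subgroup with its closure, $\widehat{G_i} \cong \overline{U}_i \cong \overline{V}_i \cong \widehat{\Gamma_i}$.

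Now compare torsion in homology. As $M$ is aspherical, $M[i] = \widetilde{M}/G_i$ is a $K(G_i,1)$, so $H_1(M[i];\IZ) \cong G_i^{\mathrm{ab}} = H_1(G_i;\IZ)$, and similarly $H_1(N[i];\IZ) \cong \Gamma_i^{\mathrm{ab}}$. For any finitely generated group $H$ the profinite abelianization $\widehat{H}^{\mathrm{ab}}$ is naturally isomorphic to $\widehat{H^{\mathrm{ab}}}$, and if $H^{\mathrm{ab}} \cong \IZ^{r} \oplus T$ with $T$ finite then $\widehat{H^{\mathrm{ab}}} \cong \widehat{\IZ}^{\,r} \oplus T$; since $\widehat{\IZ}$ is torsion-free, the torsion subgroup of $\widehat{H^{\mathrm{ab}}}$ equals $T$, so $|\tors(H^{\mathrm{ab}})|$ depends only on $\widehat{H}$. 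Applying this to $H = G_i$ and $H = \Gamma_i$ and using $\widehat{G_i} \cong \widehat{\Gamma_i}$ gives $\bigl|\tors(H_1(M[i];\IZ))\bigr| = \bigl|\tors(H_1(N[i];\IZ))\bigr|$ for all $i$. Combining this with $[G:G_i] = [\Gamma:\Gamma_i]$ and applying the homological growth Conjecture~\ref{con:Homological_growth_and_L2-torsion_for_aspherical_manifolds} in dimension $3$ (so $d = 2n+1$ with $n = 1$) to both chains yields
\[
  -\rho^{(2)}_{\an}(\widetilde{M}) = \lim_{i \to \infty} \frac{\ln\bigl(\bigl|\tors(H_1(M[i];\IZ))\bigr|\bigr)}{[G:G_i]} = \lim_{i \to \infty} \frac{\ln\bigl(\bigl|\tors(H_1(N[i];\IZ))\bigr|\bigr)}{[\Gamma:\Gamma_i]} = -\rho^{(2)}_{\an}(\widetilde{N}),
\]
hence $\rho^{(2)}(\widetilde{M}) = \rho^{(2)}(\widetilde{N})$, which is exactly Conjecture~\ref{con:Volume_and_profinitely_rigidity}.

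The only substantial input is the homological growth conjecture, which is assumed; the remainder is standard profinite group theory. The two points deserving care are the identification of the profinite completion of a finite-index subgroup with its closure in $\widehat{G}$, and the observation that $\widehat{\IZ}$ is torsion-free, so that the \emph{order} (though not the rank) of the torsion of an abelianization survives profinite completion --- these are precisely the features that make the torsion-growth side of Conjecture~\ref{con:Homological_growth_and_L2-torsion_for_aspherical_manifolds} a profinite invariant.
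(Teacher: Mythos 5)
Your proof is correct and is the argument one would expect; the paper itself does not spell out a proof but simply cites Kammeyer~\cite[Satz~6.34]{Kammeyer(2019)}, and your write-up follows that route: transport a cofinal exhausting chain of finite-index normal subgroups through the profinite isomorphism, observe that $\bigl|\tors\bigl(H_1(\cdot)\bigr)\bigr|$ of the corresponding quotients is a profinite invariant (via $\widehat{H}^{\mathrm{ab}}\cong\widehat{H^{\mathrm{ab}}}$ and torsion-freeness of $\widehat{\IZ}$), and then equate the two limits supplied by Conjecture~\ref{con:Homological_growth_and_L2-torsion_for_aspherical_manifolds} with $n=1$.

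One small point of justification is thinner than it should be: the parenthetical ``$\bigcap_i\Gamma_i=\{1\}$ (using that $\Gamma$ embeds into $\widehat{\Gamma}$)'' is not by itself enough, since the intersection of the $\overline{V}_i$ in $\widehat{\Gamma}$ need not be trivial even when $\bigcap_i G_i=\{1\}$. What saves you is that your construction of the $G_i$ by enumerating all finite-index normal subgroups and taking successive intersections makes the chain \emph{cofinal}; hence the $\overline{U}_i$ are cofinal among open normal subgroups of $\widehat{G}$, hence so are the $\overline{V}_i=\alpha(\overline{U}_i)$ in $\widehat{\Gamma}$, hence so are the $\Gamma_i$ among finite-index normal subgroups of $\Gamma$, and residual finiteness of $\Gamma$ then gives $\bigcap_i\Gamma_i=\{1\}$. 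It is worth making this cofinality step explicit, since an arbitrary exhausting chain in $G$ would not suffice.
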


The first $L^2$-Betti number is profinite among finitely presented
residually finite groups, i.e., the first $L^2$-Betti numbers of two 
finitely presented residually finite groups agree if the profinite completion of these two groups are isomorphic,
see~\cite[Corollary~3.3]{Bridson-Conder-Reid(2016)}. Higher $L^2$-Betti numbers are not
profinite rigid among finitely presented  residually finite groups, see~\cite[Theorem~1]{Kammeyer-Sauer(2020)}.

For more information about profinite rigidity we refer for instance
to~\cite[Section~6.7]{Kammeyer(2019)}.


\typeout{------------------------ Section: Miscellaneous ----------------------------}

\section{Miscellaneous}\label{sec:Miscellaneous}

We can also use this approach to assign formal differences of polytopes to many other
groups, e.g., free-by-cyclic groups and two-generator one-relator groups. These examples
are discussed in more details in~\cite{Friedl-Lueck-Tillmann(2019)}, where further references to the literature can be found.

Finally let $G$ be any group that admits a finite model for $BG$ and that satisfies the
Atiyah Conjecture and let $f\colon G\to G$ be a monomorphism. Then we can associate to
this monomorphism the polytope invariant of the corresponding ascending HNN-extension. If
$G=F_2$ is the free group on two generators this polytope invariant has been studied by
Funke-Kielak~\cite{Funke-Kielak(2018)}. We hope that this invariant of monomorphisms of
groups will have other interesting applications.

Bieri-Neumann-Strebel invariants are related to polytopes and $L^2$-invariants, see
for instance~\cite{Kielak(2020)}.

There are further interesting connections between $L^2$-invariants and group theory,
orbit equivalence and von Neumann algebras 
which we cannot cover here, see for instance~\cite{Lueck(2002)}.




\end{document}